\definecolor{luciacolor}{rgb}{0.01,0.28,1.00}
\def\referee#1{}
\newtheorem{thm}{Theorem}[section]
\newtheorem{cor}[thm]{Corollary}
\newtheorem{lemma}[thm]{Lemma}
\newtheorem{prop}[thm]{Proposition}
\theoremstyle{plain}
\newtheorem{defn}[thm]{Definition}
\theoremstyle{definition}
\newtheorem{rmk}[thm]{Remark}
\newtheorem{exa}[thm]{Example}
\newtheorem{assumption}[thm]{Assumption}
\def\$$endproof{\eqno{\qedhere}$$\end{proof}}
\numberwithin{equation}{section}
\def\beq{\begin{equation}}
\def\eeq{\end{equation}}
\def\G{{\mathbb G}}
\def\Z{{\mathbb Z}}
\def\Q{{\mathbb Q}}
\def\C{{\mathbb C}}
\def\F{{\mathbb F}}
\newcommand{\QQ}{\mathbb{Q}}
\def\CC{{\mathbb{C}}}
\def\cL{{\mathcal{ L}}}
\def\a{\alpha}
\def\sg{\sigma}
\def\Ga{\Gamma}
\def\la{\lambda}
\def\La{\Lambda}
\def\GL{\hbox{\rm GL}}
\def\SL{\hbox{\rm SL}}
\def \Hom{{\rm Hom}}
\def\Gal{\mathop{\rm Gal}}
\def\Aut{\mathop{\rm Aut}}
\def\l{\left}
\def\r{\right}
\def\PV{Picard-Vessiot}
\def\wtilde{\widetilde}
\def\l{\left}
\def\r{\right}
\def\[[{\l[\l[}
\def\]]{\r]\r]}
\def\p{\prime}
\def\ds{\displaystyle}
\title{Difference Galois theory for the ``applied'' mathematician}
\author{Lucia Di Vizio}
\begin{document}

\bibliographystyle{amsalpha}

\maketitle

\begin{abstract}
The lecture notes below correspond to the course given by the author in occasion of the
VIASM school on Number Theory (18-24 June 2018, Hanoi).
We have chosen to omit the proofs that are already presented in details in many references in the literature,
although they were explained during the lectures, and we have devoted more space to statements useful in the applications.
The applications concern many different mathematical settings, where linear difference equations naturally arise.
We cite in particular the case of Drinfeld modules, which is considered in \cite{pellarin2019carlitz} and
\cite{tavares2019stark}.
\end{abstract}

\setcounter{tocdepth}{1}
\tableofcontents

\section{Introduction}

The initial data of classical Galois theory are a field $K$, let's say of characteristic $0$,
and an irreducible polynomial $P\in K[x]$, with coefficients in $K$.
Then \emph{the} minimal field extension $L$ of $K$ containing a \emph{full set} of roots of
$P$ is constructed and one defines the Galois group $G$ of $L/K$, namely the group of the field automorphisms
of $L$ that fix the elements of $K$.
The group $G$ is finite and acts on $L$ by shuffling the roots of $P$.
The idea behind this construction is that the structure of the group $G$
should reveal hidden algebraic relations among the roots of $P$, other than the evident relations
given by $P$ itself.
\par
The same kind of philosophy applied to functional equations has been the starting
point of differential Galois theory, first, and difference Galois theory, later.
The references are numerous and it is almost impossible to list them all.
We refer to \cite{vanderPutSinger:DifferentialGaloisTheory}, for the differential case, and to  \cite{vanderPutSinger:DifferenceGaloisTheory} and \cite{hardouin-Sauloy-Singer-SantaMarta},
for the difference case.
\par
Let $K$ be a base field and $\tau$ be an endomorphism of $K$. We consider a linear difference equation $\tau(Y)=AY$, where
$A$ is a square invertible matrix with coefficients in $K$ and $Y$ is a square matrix of unknowns.
The usual approach in Galois theory of difference equations is to construct an abstract $K$-algebra $L/K$ containing the entries of a fundamental (i.e., invertible)
matrix of solutions, under the assumption that the characteristic of $K$ is zero, that the field of constants $k$ is algebraically closed and that $\tau$ is surjective.
The idea is that, in order to study the properties of the solutions a difference equation,
it is not ``important'' to solve it,
but only to understand the structure of its Galois group.
However, in applications, it usually happens that a set of solutions is given in some specific $K$-algebra:
When that's the case, it is not always easy to understand which properties transfer from the abstract solutions to the ones
that we have found.
\par
In this paper we suppose that we have a fundamental solution is some field $L/K$, equipped with an extension of $\tau$.
The divergence between the classical approach and the apparently more pedestrian approach that we are considering here, starts immediately:
Indeed in general we cannot assume that the field $L$ exists and only the existence
of a pseudo field is ensured (see Remark \ref{rmk:Picard-Vessiot} below).
In \cite{ChatzidakisHardouinSinger}, the authors reconcile these two points of view and the point of view of model theory:
They construct a group using given solutions
in a specific algebra and compare it with the group constructed in \cite{vanderPutSinger:DifferenceGaloisTheory}, but
they assume that $\tau$ is an automorphism
and the statements on the comparison of the Galois groups are not easy to apply in other settings.
\par
More recently, a very general abstract approach has been considered by A. Ovchinnikov and M. Wibmer in
\cite[\S2.2]{OvchinnikovWibmerSGaloisTheoryofLinearDifferenceEquations}, where,
in contrast with the more classical references,
the authors do not make any assumption on the characteristic of the field, they do not require that the endomorphism is surjective, and they do not assume that
the constants are algebraically closed. They do not even assume that $K$ is a field, but only that $K$ is a pseudo field.
\par
M. Papanikolas in \cite[\S4]{Papanikolas:TannakianDuality} chooses a framework which is in between the two examples above:
He works on a field $K$ equipped with an automorphisms $\tau$, but the characteristic can be positive and the field of constants is not necessarily algebraically closed.
Moreover he supposes that he already has a fundamental solution in a field extension of $K$.
We will consider the same setting as Papanikolas, apart from the fact that we only ask that $\tau$ is an endomorphism.
This seems to be a reasonable framework for many applications.
For the proofs, we usually refer to \cite{OvchinnikovWibmerSGaloisTheoryofLinearDifferenceEquations},
which is the reference with more general assumptions.
Notice that Papanikolas has a more geometric approach while Ovchinnikov and Wibmer prefer algebraic arguments.
\par
Finally we point out that we assume that the characteristic is zero in \S\ref{subsec:Dfinite} and \S\ref{sec:applications} and
that $\tau$ is an automorphism in \S\ref{sec:GaloisCorrespondenceNormal}.

\paragraph*{Remarks on the content and the organization of the text below.}
The text below is meant to be a guide to the existing literature.
From this perspective, I will give references for the proofs,
rather than writing a self-contained exposition.
I'm addressing with particular attention readers that need to apply Galois theory of difference equations,
therefore a large space is devoted to statements that may be useful in applications.
\par
The exposition is divided in three parts. The first part quickly explains the fundamentals results and ideas of difference Galois theory.
In order to be precise and correct I have been obliged to use some more sophisticated tools.
The second part, devoted to applications, is more accessible and applies the statements of the first part as black boxes.
We conclude with a last paragraph on the role of normal subgroups in the Galois correspondence.

\paragraph*{Acknowledgements} I'm indebted to the organizers of the VIASM school on Number Theory, in particular to Bruno Anglès and Tuan Ngo Dac
for their invitation both to give the course and to write the this paper.
I'd like to thank the participants of the \emph{Groupe de travail autour des marches dans le quart de plan}, who have endured
several talks on this topic, that have influenced the text below.
In particular, I'm grateful to Alin Bostan, Frédéric Chyzac and Marni Mishna for their
remarks and their attentive reading of various drafts of this survey and to the anonymous referees for the many useful and constructive comments.

\section{Glossary of difference algebra}

We give here a short glossary of terminology in difference algebra.
Classical references are \cite{Cohn-DifferenceAlgebra} and \cite{LevinDifferenceAlgebra}.

\medskip
We consider a field $F$ equipped with an endomorphism $\tau$.
We will call the pair $(F,\tau)$ a $\tau$-field or a difference field, when the reference to $\tau$ is clear from the context.
The set $k=\{f\in F:\tau(f)=f\}$, also denoted $F^\tau$, is naturally a field and is  called the field of constants of $F$.
All along this exposition, we will assume that $\tau$ is non-periodic on $F$
(i.e. there exists $f\in F$ such that for any $n\in\Z$ we have $\tau^n f\neq f$)
and we won't assume that $k$ is algebraically closed.
The example below will be our playground until the end of the paper.

\begin{exa}\label{exa:basic}
We consider $k=\CC$, $F=k(x)$ and $\tau$ a non-periodic homography acting on $x$, so that $\tau(f(x))=f(\tau(x))$.
Supposing that $\tau$ has one or two fixed points, we can assume without loss of generality that
$\tau(x)=x+1$ or that $\tau(x)=qx$, for some $q\in\C\smallsetminus\{0,1\}$, not a root of unity.
\end{exa}

We will add the prefix $\tau$ to the usual terminology in commutative algebra, to signify the invariance with respect to $\tau$.
For instance:
\begin{itemize}
  \item A $\tau$-subfield $K$ of a $\tau$-field $F$ is a subfield of $F$ such that $\tau$ induces an endomorphism of $K$, i.e., such that $\tau(K)\subset K$.
  \item A $\tau$-$K$-algebra is a $K$-algebra equipped with an endomorphism extending $\tau$ (which we still call $\tau$ for simplicity).
  \item An ideal $I$ of a $\tau$-$K$-algebra $R$ is a $\tau$-ideal if $\tau(I)\subset I$.
  \item A $\tau$-$K$-algebra $R$ is $\tau$-simple if its only $\tau$-ideals are $R$ and $0$.
\end{itemize}

\begin{exa}\label{exa:basic-example-difference-field}
For any $x\in F$ transcendental over $k$, such that
$\tau(x)\in k(x)$ and that for any $n\in\Z$ we have $\tau^n(x)\neq x$, we can consider the field
$K:=k(x)$, on which $\tau$ induces a non-periodic endomorphism.
Since we don't ask $\tau$ to be surjective, the situation is
a little bit more general than
Example \ref{exa:basic}. For instance $\tau$ can be the Mahler operator $\tau(x)=x^\kappa$, where $\kappa\geq 2$ is an integer.
\par
For any positive integer $n$, we can consider $k_n=F^{\tau^n}$ and the
difference fields $(K_n:=k_n(x),\tau)$
and $(K_n:=k_n(x),\tau^n)$.
\end{exa}

Sometimes it is useful to
consider $(k,id)$, where $id$ stands for the identity map, as a difference field, therefore we will call it a trivial $\tau$-field. In general we will say that a $k$-algebra is equipped with a trivial action of $\tau$,
when $\tau$ acts on it as the identity.
For further reference we state the following lemma:

\begin{lemma}\label{lemma:LinDisj}
Let $B$ be a $k$-algebra endowed with a trivial action of $\tau$,  $K$ a $\tau$-subfield of $F$ such that $K^\tau=k$ and
let $R\subset F$ be a $\tau$-$K$-algebra. Then $R\otimes_k B$ has a natural structure of $\tau$-$K$-algebra defined by
\beq\label{eq:LinDis}
\tau(r\otimes b)=\tau(r)\otimes b, \hbox{~for any $r\in R$ and $b\in B$.}
\eeq
Moreover $R\otimes_kB\hookrightarrow F\otimes_k B$ and
$(R\otimes_k B)^\tau=k\otimes_k B\cong B$.
\end{lemma}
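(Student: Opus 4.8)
The plan is to get everything out of the universal property of the tensor product together with the fact that $B$, being a module over the field $k=F^\tau$, is free and hence flat. First I would produce the operator of \eqref{eq:LinDis}: the map $R\times B\to R\otimes_k B$ sending $(r,b)$ to $\tau(r)\otimes b$ is biadditive, and it is $k$-balanced because for $\lambda\in k$ one has $\tau(\lambda r)=\tau(\lambda)\tau(r)=\lambda\,\tau(r)$, $\lambda$ being $\tau$-invariant; hence it factors through a well-defined additive endomorphism $\tau$ of $R\otimes_k B$ satisfying \eqref{eq:LinDis}. Evaluating on pure tensors shows that this $\tau$ is multiplicative and fixes $1\otimes 1$, so it is a ring endomorphism, and it restricts to the given endomorphism on $K\cong K\otimes 1$; thus $(R\otimes_k B,\tau)$ is a $\tau$-$K$-algebra. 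Running the same construction with $F$ in place of $R$ equips $F\otimes_k B$ with a $\tau$-$K$-algebra structure.

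Next I would treat the inclusion. Since $k$ is a field, $B$ is a free $k$-module, in particular flat, so the injection $R\hookrightarrow F$ remains injective after applying $-\otimes_k B$; by \eqref{eq:LinDis} the resulting map $R\otimes_k B\hookrightarrow F\otimes_k B$ is $\tau$-equivariant, i.e.\ an inclusion of $\tau$-$K$-algebras. This reduces the computation of $(R\otimes_k B)^\tau$ to a computation carried out inside $F\otimes_k B$.

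For the constants I would fix a $k$-basis $(b_j)_{j\in J}$ of $B$. Every element of $F\otimes_k B$ is uniquely of the form $\sum_{j}f_j\otimes b_j$ with $f_j\in F$ almost all zero, and $\tau(\sum_j f_j\otimes b_j)=\sum_j\tau(f_j)\otimes b_j$ by \eqref{eq:LinDis}; by uniqueness of the coordinates such an element is $\tau$-invariant if and only if $\tau(f_j)=f_j$ for every $j$, that is $f_j\in F^\tau=k$ for every $j$. If the element moreover lies in $R\otimes_k B$ its coordinates lie in $R$, but they already lie in $k\subseteq K\subseteq R$, so the element lies in $\sum_j k\otimes b_j=k\otimes_k B$. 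The reverse inclusion $k\otimes_k B\subseteq(R\otimes_k B)^\tau$ is immediate from \eqref{eq:LinDis} and the triviality of $\tau$ on $B$, and the multiplication map $k\otimes_k B\to B$ is the canonical isomorphism; this finishes the argument.

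The whole thing is a string of routine verifications, so I don't expect a real obstacle; the one point worth keeping straight is that here $k$ denotes the field of constants of the \emph{ambient} field $F$, so that $R^\tau\subseteq F^\tau=k$ and the coordinates in the last step are forced into $k$ rather than into some larger ring of constants, while the hypothesis $K^\tau=k$ guarantees $k\subseteq K\subseteq R$, so that $k\otimes_k B$ really sits inside $R\otimes_k B$. The only non-formal input is flatness of $B$ over $k$, which is what upgrades the plain injection $R\otimes_k B\to F\otimes_k B$ to an inclusion of difference algebras.
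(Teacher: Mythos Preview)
Your proof is correct, but it follows a different route from the paper's. The author explicitly remarks (Remark~2.4) that the lemma ``could partially be proved invoking the flatness of $F$ over $k$'' but chooses instead to give a hands-on argument ``in difference algebra'' because the technique recurs later. Concretely, for the injectivity of $R\otimes_k B\to F\otimes_k B$ the paper takes a nonzero element $\sum_{i=1}^n r_i\otimes b_i$ in the kernel with $n$ minimal, multiplies by $r_n^{-1}\otimes 1$ inside $F\otimes_k B$, applies $\tau$ and subtracts to shorten the sum, and derives a contradiction; the computation of $(R\otimes_k B)^\tau$ is done by the same minimal-length trick. You instead use that $B$ is free over the field $k$: flatness gives the injection immediately, and a choice of $k$-basis of $B$ lets you read off $\tau$-invariance coordinatewise using $F^\tau=k$. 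Your approach is shorter and more conceptual; the paper's approach is self-contained and showcases the ``shorten a minimal relation by applying $\tau$'' manoeuvre that is a staple of difference-algebra proofs.
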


\begin{rmk}
We have chosen to detail the proof, even if the lemma above is included in many references and could partially be proved invoking
the flatness of $F$ over $k$. The argument is indeed an instance of a classical way of reasoning in difference algebra and is useful in many situations.
\end{rmk}

\begin{proof}
Clearly Eq. \eqref{eq:LinDis} defines a ring endomorphism, that is the tensor product of $\tau$ and the identity in the category of rings.
Since $R\subset F$, we have a natural map of $\tau$-$K$-algebras $R\otimes_k B\to F\otimes_k B$.
We want to prove that this map is injective. By absurdum,
we suppose that the kernel is non-trivial and therefore
we choose a non-zero element $\sum_{i=1}^n  r_i\otimes b_i\in R\otimes_k B$ in the kernel
such that $n$ is minimal, i.e., we suppose that there exists no element of $R\otimes_k B$ in the kernel,
that can be written as a sum of less than $n$ elements of the form $r\otimes b\in R\otimes_k B$.
This implies in particular that the $r_i$'s are linearly independent over $k$ and that
all the $r_i$'s and $b_i$'s are non-zero.
Since $F$ is a field, in $F\otimes_k B$ we can multiply by $r_n^{-1}\otimes 1$, hence we have:
$1\otimes b_n+\sum_{i=1}^{n-1} r_ir_n^{-1}\otimes b_i=0$ in $F\otimes_k B$.
We conclude that the image of
$$
\sum_{i=1}^{n-1}\l(\tau(r_ir_n^{-1})-r_ir_n^{-1}\r)\otimes b_i\in R\otimes_k B
$$
in $F\otimes_k B$ is zero. The minimality of $n$, together with the fact that $b_i\neq 0$,
implies that $\tau(r_ir_n^{-1})-r_ir_n^{-1}=0$ and hence that
$r_ir_n^{-1}\in k$ for any $i=1,\dots,n-1$.
The linear independence of the $r_i$'s over $k$
implies that $n=1$ and hence that $r_1\otimes b_1=0$ in $F\otimes_k B$, with $r_1\neq 0$.
Since $F$ is a field, $r_1^{-1}\otimes 1\in F\otimes_k B$ and hence:
$$
1\otimes b_1=(r_1^{-1}\otimes 1)\cdot(r_1\otimes b_1)=0
$$
in $F\otimes_k B$, and we obtain the contradiction $b_1=0$.
This proves the injectivity.
\par
To conclude it is enough to prove that $(R\otimes_k B)^\tau=k\otimes_k B$.
First of all, notice that if $r\otimes b\in (R\otimes_k B)^\tau$, with $r\neq 0\neq b$, then $r\in k$.
By absurdum, let us suppose that there exists $\sum_{i=1}^n  r_i\otimes b_i\in (R\otimes_k B)^\tau\smallsetminus k\otimes_k B$,
such that no $r_i$ belongs to $k$ and $n$ is minimal.
Once again, the minimality of $n$ implies that the $r_i$'s and the $b_i$'s are linearly independent over $k$.
Moreover we know that necessarily $n\geq 2$.
We conclude that
$$
\sum_{i=1}^{n}\l(\tau(r_i)-r_i\r)\otimes b_i=\tau\l(\sum_{i=1}^n  r_i\otimes b_i\r)-\l(\sum_{i=1}^n  r_i\otimes b_i\r)=0.
$$
There are two possibility: either the $(\tau(r_i)-r_i)$'s are linearly dependent over $k$ or they are not.
If they are linearly dependent over $k$, we can find $\la_1,\dots,\la_n\in k$, not all zero, such that $\sum_{i=1}^n\la_i(\tau(r_i)-r_i)=0$.
We can suppose without loss of generality that $\la_n\neq 0$.
It implies that
$$
\sum_{i=1}^n\la_i r_i=\sum_{i=1}^n\la_i\tau(r_i)=\tau\l(\sum_{i=1}^n\la_i r_i\r),
$$
and hence that $c:=\sum_{i=1}^n\la_i r_i\in k$. Since the $r_i$'s are linearly independent over $k$, $c$ is not zero.
The $k$-linearity of the tensor product implies:
$$
\begin{array}{rcl}
\sum_{i=1}^n  r_i\otimes b_i
    &=&\sum_{i=1}^{n-1}  r_i\otimes b_i+\la_n^{-1}\l(c-\sum_{i=1}^{n-1}\la_i r_i\r)\otimes b_n\\~\\
    &=&\sum_{i=1}^{n-1}  r_i\otimes (b_i-\la_n^{-1}\la_i b_n)+\la_n^{-1}c\otimes b_n\in (R\otimes_k B)^\tau.
\end{array}
$$
Because $\la_n^{-1}c\otimes b_n\in k\otimes_k B\subset (R\otimes_k B)^\tau$, we conclude that $\sum_{i=1}^{n-1}  r_i\otimes (b_i-\la_n^{-1}\la_i b_n)\in (R\otimes_k B)^\tau$.
The minimality of $n$ implies that $b_i=\la_n^{-1}\la_ib_n$, for any $i=1,\dots,n-1$.
Finally we obtain:
$$
\sum_{i=1}^n  r_i\otimes b_i=\l(\sum_{i=1}^n \la_n^{-1}\la_i r_i\r)\otimes b_n\in (R\otimes_k B)^\tau,
$$
and therefore that
$\sum_{i=1}^n  r_i\otimes b_i\in k\otimes_k B$, in contradiction with our choice of $\sum_{i=1}^n  r_i\otimes b_i$.
We still have to consider the case in which $(\tau(r_i)-r_i)$'s are linearly independent over $k$, with
$\sum_{i=1}^{n}\l(\tau(r_i)-r_i\r)\otimes b_i=0$ in $F\otimes_kB$, but we have seen in the first part of the proof that this cannot happen,
unless $\tau(r_i)=r_i$, which is against our assumptions.
This ends the proof of the whole lemma.
\end{proof}

\section{Picard-Vessiot rings }
\label{sec:PVrings}

We now consider a field $F$ with an endomorphism $\tau:F\to F$.
Our base field will be a $\tau$-subfield $K$ of $F$,
containing $k:=F^\tau$, which implies that $K^\tau=k$.
We do assume neither that $k$ is algebraically closed, nor that $\tau$ induces an automorphism of $K$,
but we assume that $\tau$ is non-periodic over $K$.
Moreover we assume that $F/k$ is a separable extension.

\begin{rmk}
In the classical theory, one usually assumes that $k$ is algebraically closed, which simplifies a little the theory,
although not in a fundamental way. The main difference comes from the fact that the Galois group, that we will define in the following section, is an algebraic group over the field $k$.
Therefore if $k$ is algebraically closed one can avoid a more
sophisticated geometric point of view and simply identify
the Galois group to a group of invertible matrices with coefficients in $k$.
This remark will become clearer in what follows. We will comment again on the consequences of the fact that $k$ is not algebraically closed.
\par
We also point out that the assumption $F^\tau=K^\tau=k$ is crucial,
otherwise we may end up introducing new meaningless solutions in the theory. For more details, see Example \ref{exa:constants}
and the proof of Proposition \ref{prop:transcendenceINH} below.
\end{rmk}

\medskip
We consider a linear difference system
\beq\label{eq:system}
\tau(Y) = AY,
\hbox{~where $A\in\GL_d(K)$},
\eeq
and we suppose that \emph{there exists a fundamental solution matrix $U\in\GL_d(F)$ of
\eqref{eq:system}.} Then the field $L:=K(U)\subset F$ is obviously stable by $\tau$.
We have made an abuse of notation that we will repeat frequently:
By $K(U)$ we mean the field generated over $K$ by the entries of $U$.
\par
There are two main situations that the readers, according to their background, could keep in mind as a guideline through the text below:

\begin{exa}
One can consider the following two classical situations:
\begin{enumerate}
\item
	$F$ is the field of meromorphic functions over $\C$ in the variable $x$ and $\tau: f(x)\mapsto f(x+1)$. Then $k$ is the field of
    meromorphic $1$-periodic functions over $\C$.
\item
	$F$ is the field of meromorphic functions over $\C^\ast$ in the variable $x$ and $\tau: f(x)\mapsto f(qx)$, for a fixed complex number
	$q\in\C\smallsetminus\{0,1,\hbox{~roots of unity}\}$. Here $k$ is the field of meromorphic $q$-elliptic functions over $\C^\ast$.
\end{enumerate}
In both cases, we can chose $K$ to be any $\tau$-subfield of $F$, containing $k$.
A typical choice for $K$ is $k(x)$, which is the point of view taken in \cite{ChatzidakisHardouinSinger}, as far as $q$-difference equations is
concerned.
\par
Let as consider a linear system $\tau(\vec{y})=A\vec{y}$ with coefficients in $K$, of the form \eqref{eq:system}.
In the settings above, plus the assumption $|q|\neq 1$ in the $q$-difference case, Praagman proves that $\tau(\vec{y})=A\vec{y}$
has a fundamental matrix of solutions with coefficients in $F$.
See \cite[Theorem 1 and Theorem 3]{PraagmanFundamentalSolutionsForMeromorphicLinearDifferenceEquations}.
This does not mean that all possible solutions are meromorphic.
Indeed it is enough to multiply a solution matrix by a matrix whose entries are functions with some essential singularities
and that are constant with respect to $\tau$.
\end{exa}

\begin{exa}
In \cite[\S4.1]{Papanikolas:TannakianDuality} the triple $(k,K,F)$ is called a $\tau$-admissible triple.
He is specifically interested in the study of $t$-motifs and, hence, of the associated triple
$(\F_q(t),\mathbb{K},\mathbb{L})$, defined as follows \cite[\S2.1]{Papanikolas:TannakianDuality}:
\begin{itemize}
\item
$\F_q(t)$ is the field of rational functions in the variable $t$ and with coefficients in the field $\F_q$ with $q$ elements, where
$q$ is an integer power of a prime $p$;
\item
$\mathbb{K}$ is the smallest algebraically closed and complete extension of a field of rational functions $\F_q(\theta)$,
with respect to the $\theta^{-1}$-adic valuation.
\item
$\mathbb{L}$ is the field of fractions of the ring of all power series in $\mathbb{K}[[t]]$ convergent on the closed unit disk for the
$\theta^{-1}$-adic valuation.
\end{itemize}
The automorphism $\tau$ is the inverse of the Frobenius morphism on the algebraic closure of $\F_q$,
sends $\theta$ to $\theta^{1/q}$ , and is defined on $\mathbb{K}[[t]]$ as
$$
\tau\l(\sum_{n\geq 0}a_nt^n\r)=\sum_{n\geq 0}a_n^{1/q}t^n.
$$
Finally, it extends to $\mathbb{L}$ by multiplicativity.
Then $\mathbb{L}^{\tau}=\mathbb{K}^{\tau}=\F_q(t)$, as proved in Lemma 3.3.2 in \emph{loc.cit.}.
\end{exa}

We start mentioning a technical but fundamental property of the $\tau$-$K$-algebra generated by the entries of the solution matrices of \eqref{eq:system},
which allows to reconnect our point of view with the more classical approach of the \PV\ theory.

\begin{prop}\label{prop;tausemplicity}
In the notation above, let $U\in\GL_d(F)$ verify $\tau(U)=AU$.
Then $R=K\l[U,\det U^{-1}\r]\subset F$ is $\tau$-simple.
\end{prop}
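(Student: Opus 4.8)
The plan is a ``fewest terms'' argument in the spirit of the proof of Lemma~\ref{lemma:LinDisj}, preceded by a base change that manufactures $\tau$-invariant coordinates (there are none in $R$ itself beyond $k$). So let $\mathfrak q\subsetneq R$ be a $\tau$-ideal; I want to show $\mathfrak q=0$. Since $F$ is faithfully flat over $K$, it suffices to show that $\mathfrak q\otimes_K F$ is the zero ideal of $R\otimes_K F$, taken with the diagonal action of $\tau$. In $R\otimes_K F$ both $U\otimes 1$ and $1\otimes U$ are invertible and satisfy $\tau(Y)=AY$, so $Z:=(U\otimes 1)^{-1}(1\otimes U)$ satisfies $\tau(Z)=Z$; hence its entries lie in $(R\otimes_K F)^\tau$, and $R\otimes_K F=F[Z,\det Z^{-1}]$ because $1\otimes U\in\GL_d(1\otimes F)$.

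Now I would run the fewest-terms technique twice. First, on the kernel $\mathfrak J$ of $F[z_{ij},\det z^{-1}]\to R\otimes_K F$, $z\mapsto Z$: this is a $\tau$-ideal for the action that is trivial on the $z_{ij}$ and equal to $\tau$ on $F$, and taking a nonzero element of $\mathfrak J$ with fewest nonzero coefficients relative to the ($\tau$-fixed) $F$-basis of monomials and subtracting a suitable scalar multiple of its $\tau$-image shows that $\mathfrak J$ is generated by polynomials with coefficients in $k=F^\tau$. Thus $R\otimes_K F\cong F\otimes_k E$, where $E:=(R\otimes_K F)^\tau$ is a finitely generated $k$-algebra with trivial $\tau$-action, and $E$ is reduced because the hypothesis ``$F/k$ separable'' forces $F/K$ to be separable, hence $F\otimes_K F$, and with it its subring $R\otimes_K F$, to be reduced. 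Second, the same computation inside $F\otimes_k E$ (using a $\tau$-fixed $k$-spanning set of $E$) shows that every $\tau$-ideal of $F\otimes_k E$ is extended from $E$; applied to $\mathfrak q\otimes_K F$ this yields an ideal $\mathfrak a_0:=(\mathfrak q\otimes_K F)\cap E$ of $E$, proper because $\mathfrak q$ is, with $\mathfrak q\otimes_K F=\mathfrak a_0\cdot(R\otimes_K F)$. The proof is reduced to showing $\mathfrak a_0=E$.

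Here the inclusion $R\subset F$ finally enters, and this is the step I expect to be the real obstacle. The multiplication map $\mu\colon R\otimes_K F\to F$, $r\otimes f\mapsto rf$, is a surjective $\tau$-equivariant $K$-algebra homomorphism with $\mu(Z)=I_d$, so it restricts on $E$ to the $k$-point $\varepsilon\colon E\to k$, $z_{ij}\mapsto\delta_{ij}$, which is the identity of the affine $k$-group scheme $G:=\operatorname{Spec}E$ (its group law being read off from $Z''=ZZ'$ in the threefold tensor product). If $\mathfrak q\neq 0$ then $\mu(\mathfrak q\otimes_K F)=\mathfrak qF=F$, so $\varepsilon(\mathfrak a_0)=k$; equivalently, \emph{if the identity point of $G$ lies on $V(\mathfrak a_0)$ then $\mathfrak a_0=0$}. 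To remove the restriction to the identity point one uses that $G$ acts, by right translation on fundamental solutions, through $\tau$-automorphisms of $R$ after a suitable extension $k'/k$ of the constant field (separable in characteristic zero): a point $P\in V(\mathfrak a_0)$ exists over such a $k'$, and translating $\mathfrak a_0\otimes_kk'$ by $P$ produces an ideal whose zero set contains the identity, which by the italicized assertion is then $0$; as translation is an isomorphism, $\mathfrak a_0=0$, contradicting $\mathfrak a_0\neq 0$. Hence $V(\mathfrak a_0)=\varnothing$, i.e.\ $\mathfrak a_0=E$, so $\mathfrak q\otimes_K F=R\otimes_K F$ and $\mathfrak q=R$ by faithful flatness, against $\mathfrak q\subsetneq R$. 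The bookkeeping in this last step --- that the relevant group scheme is unchanged under the auxiliary constant extension, together with the handling of inseparability in positive characteristic --- is where I would expect most of the work; alternatively the whole statement can be read off from the construction of Picard--Vessiot rings in \cite{OvchinnikovWibmerSGaloisTheoryofLinearDifferenceEquations}.
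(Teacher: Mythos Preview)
The paper gives no self-contained argument here: it simply cites \cite[Proposition~2.14]{OvchinnikovWibmerSGaloisTheoryofLinearDifferenceEquations}. Your sketch is a faithful reconstruction of the standard torsor proof that underlies that reference (and its differential ancestor in van der Put--Singer): pass to $R\otimes_K F$, identify it with $F\otimes_k E$ for the constant algebra $E=(R\otimes_K F)^\tau$ via a fewest-terms descent, observe that the multiplication map $\mu$ realises the identity section of the group scheme $\operatorname{Spec}E$, and then use translation by a geometric point to upgrade ``identity $\notin V(\mathfrak a_0)$'' to ``$V(\mathfrak a_0)=\varnothing$''. So you and the paper are pointing at the same proof; you have simply written it out rather than cited it.

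Two small comments. First, the sentence ``The proof is reduced to showing $\mathfrak a_0=E$'' reads oddly immediately after you have asserted that $\mathfrak a_0$ is proper; it would be cleaner to say explicitly at that point that you now assume $\mathfrak q\neq 0$ and aim to derive $\mathfrak a_0=E$, contradicting properness. Second, you are right that the genuine work sits in the translation step over a constant extension $k'/k$: one must check that the italicized implication still applies to the translated $\tau$-ideal of $R\otimes_k k'$, and this comes down to nonzero elements of $R\otimes_k k'$ remaining nonzerodivisors in $F\otimes_k k'$ --- precisely where the separability hypothesis on $F/k$ earns its keep and where positive characteristic needs care. Your flagging of this is accurate, and your fallback (``alternatively the whole statement can be read off from \cite{OvchinnikovWibmerSGaloisTheoryofLinearDifferenceEquations}'') is literally what the paper does.
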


\begin{proof}
The statement is a special case of
\cite[Proposition 2.14]{OvchinnikovWibmerSGaloisTheoryofLinearDifferenceEquations}, as one can see from Definitions 2.2 and 2.4 in \emph{loc.cit.}
\end{proof}

At this point, the reader should pay attention to the terminology in the literature.
The ring $R$ in the proposition above and its field of fractions $L$ are a \emph{weak} Picard-Vessiot ring and a
\emph{weak} Picard-Vessiot field, respectively, according to \cite[Definition 2.1]{ChatzidakisHardouinSinger}.
Indeed it follows immediately from their definition that $R^\tau=L^\tau=k$.
Proposition \ref{prop;tausemplicity} shows that $R$ and $L$ are respectively a Picard-Vessiot ring and a Picard-Vessiot field,
following also the definition \cite[Defintion 2.12]{OvchinnikovWibmerSGaloisTheoryofLinearDifferenceEquations}.
For the purpose of this paper, we will use a terminology in between \cite{ChatzidakisHardouinSinger} and
\cite{OvchinnikovWibmerSGaloisTheoryofLinearDifferenceEquations}, knowing that $R$ and $L$ satisfy
the definitions in both the cited references, and that, therefore,
the results in both references apply here.

\begin{defn}\label{defn:PV}
A $\tau$-simple $\tau$-$K$-algebra
$R$ is called
a \PV\ ring (for \eqref{eq:system}) if
there exists $V\in\GL_d(R)$ such that $\tau(V)=AV$ and $R=K\l[V,\det V^{-1}\r]$.
\par
A difference field $L$ is called
a \PV\ field over $K$ (for \eqref{eq:system}) if
$L^\tau=k$ and
$L=K(V)$ for a $V\in\GL_d(L)$ such that $\tau(V)=AV$.
We will call $L/K$ a \PV\ extension.
\end{defn}

\begin{rmk}\label{rmk:Picard-Vessiot}
If we do not have a field where to find enough solutions of our equation, we have to construct an abstract Picard-Vessiot extension.
To do so, one considers the ring of polynomials in the $d^2$ variables $X=(x_{i,j})$ with coefficients in $K$.
Inverting $\det X$ and setting $\tau(X)=AX$, we obtain a ring $K[X,\det X^{-1}]$ with an endomorphism $\tau$.
Any of its quotients by a maximal $\tau$-invariant ideal is a Picard-Vessiot ring of  $\tau(\vec{y})=A\vec{y}$ over $K$.
It is important to notice that the ring $R$ does not need to be a domain.
It can be written has a direct sum $R_1\oplus\dots\oplus R_r$, such that:
$R_i=e_i R$, for some $e_i\in R$ such that $e_i^2=e_i$;
$R_i$ is a domain; there exists a permutation $\sg$ of $\{1,\dots,r\}$ such that $\tau(R_i)\subset R_{\sg(i)}$.
See \cite[Cor.~1.16]{vanderPutSinger:DifferenceGaloisTheory}.
Its total field of fractions is a pseudo field, that is the
tensor product of the fraction field of each $R_i$.
For a precise definition of pseudo field, see for instance
\cite[Definition 2.2]{OvchinnikovWibmerSGaloisTheoryofLinearDifferenceEquations}.
\end{rmk}

The following example shows the importance of the assumption
$L^\tau=k$ in the definition of \PV\ field.

\begin{exa}\label{exa:constants}
Let us consider the equation $\tau(y)=-y$, with $k$, $K$ and $F$ as above.
We suppose that there exists a $2$-dimensional $k$-vector space of solutions of $\tau(y)=-y$ in $F$, which coincides with $k_2:=F^{\tau^2}$.
The Picard-Vessiot field (contained in $F$) of $\tau(y)=-y$ over $K$ is $L:=K(k_2)$.
\par
We could also have considered a field of rational function $F(T)$ with coefficients in $F$ and in the variable $T$.
Since $T$ is transcendental, we can set $\tau(T)=-T$ and obtain an endomorphism of $F(T)$.
If we do not assume that the Picard-Vessiot field has the same field of constants than the base field $K$,
we see that $K(k_2)(T)$ is a Picard-Vessiot field for $\tau(y)=-y$, whose field of constants is $k_2(T^2)$.
Of course, the solution $T$ is somehow artificial and
the extension $K(k_2)(T)/K$ is much bigger (i.e. has many more automorphisms, see next section) than $K(k_2)/K$.
\end{exa}

The expected relations between \PV\ rings and \PV\ fields are verified. If $R, L\subset F$, the proof is actually straightforward.

\begin{cor}\emph{\cite[Proposition 2.15]{OvchinnikovWibmerSGaloisTheoryofLinearDifferenceEquations}}
\begin{trivlist}
\item
1. Let $R$ be domain which is a \PV\ ring. Then its field of fractions is a \PV\ field.
\item
2. Let $L$ be a \PV\ field for \eqref{eq:system} and let $U\in\GL_d(L)$ be a solution of \eqref{eq:system}.
Then $K[U,\det U^{-1}]$ is a \PV\ ring.
\end{trivlist}
\end{cor}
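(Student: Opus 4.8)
The plan is to deduce both assertions from \cite[Proposition 2.15]{OvchinnikovWibmerSGaloisTheoryofLinearDifferenceEquations}, and, as the statement suggests, to record the elementary argument in the case, relevant for most applications, where $R$ and $L$ are contained in $F$.

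First I would treat assertion 1 assuming $R\subset F$. Write $R=K\l[V,\det V^{-1}\r]$ with $V\in\GL_d(R)$ and $\tau(V)=AV$, as in Definition \ref{defn:PV}. Since $F$ is a field, $\tau$ is injective on $F$, hence on $R$, so it extends (uniquely) to the field of fractions of $R$; and because $F$ is a field containing $R$, this field of fractions is canonically the subfield $L:=K(V)\subset F$, which is stable under $\tau$ (the entries of $\tau(V)=AV$ and $\tau(\det V^{-1})=(\det A)^{-1}\det V^{-1}$ lie in $L$). Now $k=K^\tau\subset L^\tau\subset F^\tau=k$ forces $L^\tau=k$, while $V\in\GL_d(R)\subset\GL_d(L)$ still satisfies $\tau(V)=AV$; so $L$ is a \PV\ field for \eqref{eq:system}.

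Next I would treat assertion 2 assuming $L\subset F$. Given $U\in\GL_d(L)\subset\GL_d(F)$ with $\tau(U)=AU$, Proposition \ref{prop;tausemplicity} applies directly and shows that $R:=K\l[U,\det U^{-1}\r]\subset F$ is $\tau$-simple; since it is by construction generated over $K$ by the entries of the fundamental solution matrix $U$ together with $\det U^{-1}$, it is a \PV\ ring.

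The main obstacle is the general case, in which $L$ (resp. $R$) is an abstract difference field (resp. algebra) not assumed to sit inside a fixed $F$ satisfying our running hypotheses: then one can neither form $\mathrm{Frac}(R)$ inside such an $F$ for assertion 1, nor invoke Proposition \ref{prop;tausemplicity} for assertion 2. This is precisely where one needs the full strength of \cite[Proposition 2.15]{OvchinnikovWibmerSGaloisTheoryofLinearDifferenceEquations}: for assertion 2 one picks a maximal $\tau$-ideal $\mathfrak{m}$ of $R$ (which is proper, since $1\notin\mathfrak{m}$) and proves $\mathfrak{m}=0$, by comparing $U$ with its image $\overline U$ in the \PV\ ring $R/\mathfrak{m}$ — the two fundamental solution matrices differ by an invertible matrix with entries in the constants of a suitable tensor product, which one controls in the spirit of Lemma \ref{lemma:LinDisj}; for assertion 1 one checks directly that the passage to the field of fractions adds no new constants, so that $L^\tau=R^\tau=k$. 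I would simply cite \emph{loc. cit.} for these two points rather than reproduce them here.
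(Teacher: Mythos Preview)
Your proposal is correct and matches the paper's approach exactly: the paper gives no proof at all, merely remarking before the statement that ``if $R,L\subset F$, the proof is actually straightforward'' and otherwise relying on the citation to \cite[Proposition 2.15]{OvchinnikovWibmerSGaloisTheoryofLinearDifferenceEquations}. You have simply spelled out that straightforward argument (using $L^\tau\subset F^\tau=k$ for part 1 and Proposition \ref{prop;tausemplicity} for part 2) and deferred the abstract case to the same reference, which is precisely what the paper intends.
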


Notice that one can always compare two different Picard-Vessiot rings, up to an algebraic extension of
the field of constants:

\begin{prop}
\PV\ rings have the following uniqueness properties:
\begin{enumerate}
\item
Let $R_1$ and $R_2$ be two \PV\ rings for \eqref{eq:system}, both contained in $F$.
Then $R_1=R_2$.
\item
Let $R\subset F$ and $R^\p$ be two \PV\ rings for \eqref{eq:system}. (Notice that
we do not suppose that $R^\p\subset F$!) Then there exists an algebraic field extension $\wtilde k$ of $k$,
containing a copy of $k^\p:=(R^\p)^\tau$, such that $R\otimes_k\wtilde k$ is isomorphic to $R^\p\otimes_{k^\p}\wtilde k$ as a
$K\otimes_k\wtilde  k$-$\tau$-algebra.
\end{enumerate}
\end{prop}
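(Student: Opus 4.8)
The plan is to settle the first statement by a direct computation and the second by the classical argument on the tensor product of two Picard--Vessiot rings over $K$.

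\emph{Part 1.} Write $R_i=K\l[U_i,\det U_i^{-1}\r]$ with $U_i\in\GL_d(F)$, $\tau(U_i)=AU_i$, for $i=1,2$. Then $C:=U_1^{-1}U_2\in\GL_d(F)$ satisfies $\tau(C)=(AU_1)^{-1}(AU_2)=C$, so the entries of $C$ and of $C^{-1}$ lie in $F^\tau=k\subset K$; that is, $C\in\GL_d(k)$ and $U_2=U_1C$. Multiplying a fundamental matrix on the right by a constant invertible matrix does not change the $K$-algebra generated by its entries together with the inverse of its determinant, whence $R_2=R_1$.

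\emph{Part 2.} I would first equip $R\otimes_KR^\p$ with the $\tau$-$K$-algebra structure given by $\tau(r\otimes r^\p)=\tau(r)\otimes\tau(r^\p)$. This algebra is nonzero ($K$ is a field and $R,R^\p$ are nonzero) and finitely generated over $K$, so by Zorn's lemma it has a maximal $\tau$-ideal $\mathfrak m$; set $T:=(R\otimes_KR^\p)/\mathfrak m$, a $\tau$-simple $\tau$-$K$-algebra, still finitely generated over $K$. The structural maps $R\to T$ and $R^\p\to T$ have $\tau$-ideals for kernels, hence are injective since $R$ and $R^\p$ are $\tau$-simple (Definition \ref{defn:PV}); identify $R$ and $R^\p$ with their images in $T$. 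The constants $\wtilde k:=T^\tau$ form a field (if $0\neq c\in T^\tau$ then $cT$ is a nonzero $\tau$-ideal, hence all of $T$, so $c$ is a unit with $c^{-1}\in T^\tau$), and $\wtilde k$ contains $k$ and a copy of $k^\p:=(R^\p)^\tau$. Let $U\in\GL_d(R)$ and $U^\p\in\GL_d(R^\p)$ be the fundamental solutions; then $C:=U^{-1}U^\p\in\GL_d(T)$ is fixed by $\tau$ (because $\tau(U)=AU$ and $\tau(U^\p)=AU^\p$), so $C\in\GL_d(\wtilde k)$. Since $T$ is generated as a ring by $R$ and $R^\p$, and the entries of $U^\p=UC$ (resp.\ of $U=U^\p C^{-1}$) lie in the subring generated by $R$ and $\wtilde k$ (resp.\ by $R^\p$ and $\wtilde k$), both these subrings equal $T$. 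Hence the natural $\tau$-algebra maps
$$
R\otimes_k\wtilde k\longrightarrow T\qquad\text{and}\qquad R^\p\otimes_{k^\p}\wtilde k\longrightarrow T,\qquad a\otimes c\longmapsto ac,
$$
are surjective.

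It remains to see these surjections are isomorphisms, for which it suffices that $R\otimes_k\wtilde k$ and $R^\p\otimes_{k^\p}\wtilde k$ be $\tau$-simple (a surjection from a $\tau$-simple algebra onto the nonzero ring $T$ is automatically injective). This I would get by a minimal-length argument in the spirit of the proof of Lemma \ref{lemma:LinDisj}: given a nonzero $\tau$-ideal $I$ of $R\otimes_k\wtilde k$, pick $0\neq\eta=\sum_{i=1}^n r_i\otimes c_i\in I$ with $n$ minimal and $c_1,\dots,c_n$ linearly independent over $k$; the possible values of $r_1$ form a nonzero $\tau$-ideal of $R$, hence all of $R$, so we may take $r_1=1$; then $\tau(\eta)-\eta=\sum_{i=2}^n(\tau(r_i)-r_i)\otimes c_i\in I$ has length $<n$, so it vanishes, forcing $r_i\in R^\tau=k$ for $i\geq2$ (here $R\subset F$ is used); hence $\eta=1\otimes c$ with $0\neq c\in\wtilde k$ and $I=R\otimes_k\wtilde k$. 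The identical argument with $k^\p=(R^\p)^\tau$ in place of $k$ disposes of $R^\p\otimes_{k^\p}\wtilde k$. Composing $R\otimes_k\wtilde k\xrightarrow{\sim}T$ with the inverse of $R^\p\otimes_{k^\p}\wtilde k\xrightarrow{\sim}T$ then yields an isomorphism $R\otimes_k\wtilde k\cong R^\p\otimes_{k^\p}\wtilde k$ that by construction commutes with $\tau$ and is $K\otimes_k\wtilde k$-linear.

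The one genuinely external input I would import rather than reprove is that $\wtilde k$ is \emph{algebraic} over $k$ — the structural fact that a finitely generated $\tau$-simple $\tau$-$K$-algebra adjoins no new transcendental constant over $K^\tau=k$, for which I refer to \cite{OvchinnikovWibmerSGaloisTheoryofLinearDifferenceEquations}. I expect this to be the real obstacle to a fully self-contained proof, the tensor-product bookkeeping above being comparatively routine.
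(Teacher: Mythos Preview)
Your proof is correct. Part 1 is identical to the paper's argument. For Part 2 the paper simply refers to \cite[Theorem 2.16]{OvchinnikovWibmerSGaloisTheoryofLinearDifferenceEquations} and remarks that the key point is their Lemma 2.13 ensuring that $k^\p/k$ is algebraic; you have instead written out the standard tensor-product construction (form $R\otimes_K R^\p$, quotient by a maximal $\tau$-ideal, compare fundamental matrices, and use a minimal-length argument to get $\tau$-simplicity of the base changes), while deferring the same algebraicity statement to the same source. So your approach is the one underlying the paper's cited reference, spelled out rather than black-boxed, and you have correctly isolated the algebraicity of $\wtilde k/k$ as the only nontrivial external input.
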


\begin{proof}
The first assertion follows from the fact that any pair of fundamental solutions
$U,V\in\GL_d(F)$ verifies $\tau(U^{-1}V)=U^{-1}V$, i.e., $U^{-1}V\in\GL_d(k)$. In fact, this implies that
$K[U,\det U^{-1}]=K[V,\det V^{-1}]\subset F$.
For the second assertion,
see \cite[Theorem 2.16]{OvchinnikovWibmerSGaloisTheoryofLinearDifferenceEquations}.
Notice that the key-point of its proof is Lemma 2.13 in \emph{loc.cit.}, which ensure that $k^\p/k$ must be an
algebraic extension.
\end{proof}

We give an explicit
example to explain the necessity of extending the constants to $\wtilde k$ in the statement
above.

\begin{exa}
Let $k$ be the field of $1$-periodic meromorphic functions over $\C$ and let $K=k(x)$.
The Picard-Vessiot ring of the equation $\tau(y)=xy$ over $K$, contained in $F$, is
$R:=K[\Gamma(x),\Gamma(x)^{-1}]$, where $\Gamma(x)$ is the Euler Gamma function.
\par
Now let $f(x)$ be a $2$-periodic function, algebraic over $k$, but not meromorphic over $\C$.
It means that $f(x)$ lives on an analytic $2$-fold covering of $\C$ and has some branching points.
The function $c(x):=f(2x)$ is $1$-periodic but does not belong to $F$, and hence not to $k$.
The $K$-algebra $R^\p:=K[\wtilde\Gamma(x),\wtilde\Gamma(x)^{-1}]$, where $\wtilde\Gamma(x):=c(x)\Gamma(x)$,
is a Picard-Vessiot ring for $\tau(y)=xy$.
In the notation of the proposition above, we have $k^\p=k$ and $\wtilde k:=k(c(x))$.
Indeed, $\Gamma(x)\mapsto c(x)\Gamma(x)$ defines an automorphism from $R\otimes_k\wtilde k$ to $R^\p\otimes_k\wtilde k$
as $K\otimes_k\wtilde k$-algebras.
\end{exa}

We close the section with a couple of easy, yet crucial, examples:

\begin{exa}\label{exa: order1-PV}
Let $a$ be a non-zero element of $K$ and let us consider the rank-one equation $\tau(y)=ay$.
By assumption there exists a solution $z\in F$ verifying $\tau(z)=az$. Hence
$K[z,z^{-1}]$ is a \PV\ ring for $\tau(y)=ay$ and $K(z)$ is a \PV\ field.
Generically, $z$ is transcendental over $K$, but not always.
For instance, if $F$ is the field of meromorphic functions over $\C$ in the variable $x$,
$\tau(f(x))=f(x+1)$ for any $f\in F$, $K=k(x)$ and $a=-1$, we can take $z=\exp(\pi ix)\in F$.
In this case $K(z)$ is a finite extension of degree 2, since $\exp(\pi ix)^2=\exp(2\pi ix)$ is a
$1$-periodic function, belonging to $k$.
\end{exa}

\begin{exa}\label{exa:log}
Let $f\in K$ and let us consider the inhomogeneous equation
$\tau(y)=y+f$. Such an equation is equivalent to the matrix system
$$
\tau(Y)=\begin{pmatrix}
1 & f\\ 0 & 1
\end{pmatrix}Y,
$$
whose fundamental solution is given by $Y=\begin{pmatrix}1& z\\0 &1 \end{pmatrix}$,
where $z\in F$ verifies $\tau(z)=z+f$.
Since $Y^{-1}=\begin{pmatrix}1& -z\\0 &1 \end{pmatrix}$,
the \PV\ ring of $\tau(y)=y+f$ is $K[z]$ and the \PV\ field is $K(z)$.
\end{exa}

\section{The Galois group}

The Galois group of a difference system of the form \eqref{eq:system} is a linear algebraic group defined over the field of constants $k$.
As we have already pointed out,
since we have chosen not to assume that  $k$ is algebraically closed,
we cannot stick to a naive approach to linear algebraic groups as sets of matrices with entries in the base field,
but we have to use the point of view of group schemes.
For the reader's convenience we recall informally a minimal amount of definitions that are necessary in what follows.
They are contained in any classical reference on group schemes, for instance \cite{Waterhouse:IntrotoAffineGroupSchemes}.

\subsection{A short digression on group schemes}

A group scheme $G$
over the field $k$ is a covariant functor
from the category of $k$-algebras to the category of groups:
$$
\begin{array}{rccc}
 G : & \hbox{$k$-algebras} & \to & \hbox{Groups} \\
   & B & \mapsto & G(B)
\end{array}.
$$
An affine group scheme is a group scheme which is representable, i.e., there exists a $k$-algebra $k\l[G\r]$
such that the functor $G$ and $\Hom_k(k\l[G\r],-)$ are naturally isomorphic.
This implies, in particular, that $G(B)$ and $\Hom_k(k\l[G\r],B)$ are isomorphic as groups, for any $k$-algebra $B$.

\begin{exa}\label{exa:groupschemes}
For $k=\QQ$, we can look at $\GL_n$ as an affine group scheme over $\QQ$ in the following way:
$$
\begin{array}{rccc}
 \GL_{n,\QQ} : & \hbox{$\QQ$-algebras} & \to & \hbox{Groups} \\
   & B & \mapsto & \GL_n(B)
\end{array}.
$$
We recall that, for a general $\QQ$-algebra $B$, $GL_n(B)$ is the group of square matrices with coefficients in $B$,
whose determinant is an invertible element of $B$.
We have:
$$
\QQ\l[\GL_{n,\QQ}\r]=\frac{\QQ[t, x_{i,j},\, i,j=1,\dots, n]}{(t\det(x_{i,j})-1)}.
$$
Of course an analogue definition holds for the affine group scheme $\GL_{n, k}$, define over a generic field $k$.
For $n=1$, we obtain the multiplicative affine group scheme, for whom we will use the notation $\G_{m,k}$ rather then $\GL_{1,k}$.
The additive affine group scheme $\G_{a,k}$ is defined as follows:
$$
\begin{array}{rccc}
 \G_{a,k} : & \hbox{$k$-algebras} & \to & \hbox{Groups} \\
   & B & \mapsto & \l\{\begin{pmatrix}1 & b \\ 0 & 1\end{pmatrix}\vert\, b\in B\r\}.
\end{array}.
$$
We have:
$$
k\l[\G_{a,k}\r]=\frac{k\l[\GL_{2,k}\r]}{(x_{1,1}-1,x_{2,2}-1,x_{2,1})}.
$$
It follows that $k\l[\G_{a,k}\r]$ can be naturally identified with the algebra $k[x]$
of polynomial in the variable $x$ and coefficients in $k$.
One can define in an analogous way the affine group scheme $\SL_{n,k}$ over $k$, whose algebra is
$k\l[\SL_{n,k}\r]=\frac{k[x_{i,j},\, i,j=1,\dots, n]}{(\det(x_{i,j})-1)}$.
\end{exa}

The Yoneda Lemma ensures that $k\l[G\r]$ is unique up to isomorphism.  An important property (that we won't use, because we are not getting
into the details of the proofs) of $k\l[G\r]$
is that it has a natural structure of Hopf algebra.
\par
The affine group scheme $G$ is said to be an algebraic group
if $k\l[G\r]$ is a finitely generated $k$-algebra.
This means that $k\l[G\r]$ can be identified with a quotient $k[x_1,\dots, x_n]/I $
of a ring of polynomials by a convenient (Hopf) ideal $I$.  It allows to identify
$G(B)$ with the set of zeros of $I$ in $B^n$, for any $k$-algebra $B$.
In other words, $G$ can be identified with an affine variety defined over $k$.
All the affine group schemes in Example \ref{exa:groupschemes}, as well as all the affine group schemes appearing in this paper, are algebraic groups.
\par
If $G$ is an algebraic group and  $G^\p$ is another affine group scheme defined over $k$, we say that $G^\p$ is an affine subgroup scheme of $G$
if there exists a surjective morphism of Hopf algebras $k\l[G\r]\to k\l[G^\p\r]$.
This implies that $G^\p(B)$ can be identified naturally to a subgroup of $G(B)$, for any $k$-algebra $B$.
We say that $G^\p$ is a normal algebraic subgroup of $G$, if $G^\p(B)$ is a normal subgroup of $G(B)$,
for any $k$-algebra $B$.

\begin{exa}
In the notation of Example \ref{exa:groupschemes}, we have:
\begin{trivlist}
\item
1. The additive affine group scheme $\G_{a,k}$ is an algebraic subgroup of $\GL_{2,k}$.
\item
2. We have a surjective morphism from $k\l[\GL_{n,k}\r]$ to $k\l[\SL_{n,k}\r]$
defined by $t\mapsto 1$, therefore $\SL_{n,k}$ is naturally an algebraic subgroup of $\GL_{n,k}$.
\end{trivlist}
\end{exa}

For further reference, we describe the algebraic subgroups of  $\G_{m,k}$.
As we have already pointed out, we have $k[\G_{m,k}]=\frac{k[x,t]}{(xt-1)}$, therefore we can write for short
$k[\G_{m,k}]=k\l[x,\frac{1}{x}\r]$.
The algebraic subgroups of $\G_{m,k}$ are the defined by equations of the form $x^n-1$, for any non-negative integer $n$.
They are represented by the quotients $\frac{k\l[x,\frac{1}{x}\r]}{(x^n-1)}$.
If $G$ is one of those subgroups, with $n\geq 1$, and $B$ is a $k$-algebra,
then $G(B)$ is nothing more that the group of $n$-th roots of unity contained in $B$.
For $n=1$, we obtain the trivial algebraic subgroup $\{1\}$ of $\G_{m,k}$, while for $n=0$ we obtain the whole $\G_{m,k}$.
\par
Let us consider the algebraic group $G_{m,k}^n$, for some positive integer $n$. We have $k[G_{m,k}^n]=k\l[x_1,\frac{1}{x_1},\dots,x_n,\frac{1}{x_n}\r]$.
The algebraic subgroups of $G_{m,k}^n$ are defined by polynomials of the form
$x_1^{\a_1}\dots x_n^{\a_n}-1$, where $\a_1,\dots,\a_n\in\Z$.

\medskip
We will also need the description of the algebraic subgroups of $\G_{a,k}^n$, where $n$ is a positive integer.
For any $k$-algebra $B$, $\G_{a,k}^n(B)$ can be naturally identified to $B^n$.
Therefore, an algebraic subgroup $G$ of $\G_{a,k}^n$ is defined by an ideal generated by at most $n$ independent linear equations with coefficients in $k$.
In particular, we will use the fact that a proper algebraic subgroup of $\G_{a,k}^n$ is always an algebraic subgroup of the group represented by
the algebra $\frac{k[x_1,\dots,x_n]}{(\a_1x_1+\dots+\a_nx_n)}$, for some $\a_1,\dots,\a_n\in k$, not all zero.
Notice that, for $n=1$, the algebraic group $G_{a,k}$ does not have any proper algebraic subgroup.

\subsection{The Galois group of a linear difference system}

Let $K\subset F$ be our base field, with $k=K^\tau=F^\tau$, and let us consider a system of the form \eqref{eq:system}.
From now on we will assume \emph{implicitly} that all \PV\ rings and all \PV\ fields are contained in $F$.
So let $R$ ($\subset F$) be a \PV\ ring for \eqref{eq:system}
and let $L$ be its field of fractions.
\par
For more details
on what follows, see
\cite[\S 2.7]{OvchinnikovWibmerSGaloisTheoryofLinearDifferenceEquations}.\footnote{
In the notation of \cite{OvchinnikovWibmerSGaloisTheoryofLinearDifferenceEquations},
one has to take $\Phi=\tau$ and $\sigma$ to be the identity.}

\begin{defn}
\emph{\cite[2.50]{OvchinnikovWibmerSGaloisTheoryofLinearDifferenceEquations}}
We call the (difference) Galois group of \eqref{eq:system} the following group scheme:
$$\begin{array}{rccc}
\Gal(L/K):&\hbox{\rm $k$-Algebras}&\to&\hbox{\rm Groups}\\
&B&\mapsto&\Aut^\tau(R\otimes_k B/K\otimes_k B),
\end{array}
$$
where:
\begin{enumerate}
\item
the $k$-algebra $B$ is endowed with a structure of trivial $\tau$-$k$-algebra,
so that $\tau(f\otimes b)=\tau(f)\otimes b$,
for any $f\in R$ and any $b\in B$;
\item
$\Aut^\tau(R\otimes_k B/K\otimes_k B)$ is the group of the ring automorphisms of
$R\otimes_k B$, that fix $K\otimes_k B$ and commute with $\tau$.
\end{enumerate}
The functor $\Gal(L/K)$ acts on morphisms by extension of constants, namely,
each morphism of $k$-algebras $\a:B_1\to B_2$ defines a structure
of $B_1$-algebra over $B_2$ and the
definition of
$\Gal(L/K)(\a): \Aut^\tau(R\otimes_k B_1/K\otimes_k B_1)
\to\Aut^\tau(R\otimes_k B_2/K\otimes_k B_2)$ relies on the fact
that $R\otimes_k B_2\cong R\otimes_k B_1\otimes_{B_1,\a} B_2$.
\end{defn}

For any choice of a fundamental solution matrix
$U\in\GL_d(R)$ of \eqref{eq:system}, for any $k$-algebra $B$ and any $\varphi\in\Gal(L/K)(B)$
we have that
$$
\tau(U^{-1}\varphi(U))=U^{-1}A^{-1}\varphi(A)\varphi(U)=U^{-1}\varphi(U)\in\GL_d(B),
$$
where we have identified $U$ and $U\otimes 1$ in $R\otimes_k B$, making an abuse of notation that we will repeat
frequently. (Notice that we have used the fact that $(R\otimes_k B)^\tau=B$. See Lemma \ref{lemma:LinDisj}.)
The maps $\varphi\mapsto U^{-1}\varphi(U)$ represents $\Gal(L/K)(B)$ as a subgroup of $\GL_d(B)$.
The linearity of the difference system \eqref{eq:system} immediately implies that another choice of the
fundamental solution matrix leads to a conjugated representation,
so that  most of the times we can identify $\Gal(L/K)(B)$ with a subgroup scheme of $\GL_{d,k}(B)$,
forgetting to mention the matrix $U$.
The following proposition says that such a representation is functorial in $B$, in the sense
that $\Gal(L/K)$ is an algebraic subgroup of $\GL_{d,k}$, as in the next example.

\begin{exa}\label{exa: order1}
Let us consider the rank-one difference equation $\tau(y)=ay$, where $a\in K$, as in Example \ref{exa: order1-PV}.
By assumption, there exists $z\in F$ such that $\tau(z)=az$ and $R=K[z, z^{-1}]$ is its \PV\ ring.
For any $k$-algebra $B$ and any $\varphi\in\Gal(L/K)(B)$, the element $\varphi(z\otimes 1)$ of $R\otimes_k B$
must be a solution of $\tau(y)=ay$, and hence there exists $c_\varphi\in B^\ast$ such that
$\varphi(z\otimes 1)=c_\varphi(z\otimes 1)$.
This means that $\Gal(L/K)$ can be identified with a subgroup of the multiplicative group $\G_{m,k}$ defined over $k$,
therefore it coincides either with $\G_{m,k}$ or with a cyclic group.
If for instance $a=-1$, then we must have $z^2\in k$, and therefore $c_\varphi^2=1$.
\end{exa}

\begin{prop}\label{prop:algebraicgp}
\emph{\cite[Lemma 2.51]{OvchinnikovWibmerSGaloisTheoryofLinearDifferenceEquations}}
The Galois group $\Gal(L/K)$ is an algebraic group defined over $k$,
represented by the $k$-algebra $(R\otimes_K R)^\tau$.
\end{prop}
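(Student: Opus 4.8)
The plan is to produce everything explicitly from a ``universal'' solution living over $R$ itself. Fix a fundamental matrix $U\in\GL_d(R)$ with $\tau(U)=AU$, so that $R=K[U,\det U^{-1}]$, and equip $R\otimes_K R$ with the diagonal endomorphism $\tau(r\otimes s)=\tau(r)\otimes\tau(s)$. Set $Z:=(U\otimes 1)^{-1}(1\otimes U)\in\GL_d(R\otimes_K R)$. Since the entries of $A$ lie in $K$, a one-line computation gives $\tau(Z)=Z$, so the entries of $Z$ and of $\det(Z)^{-1}$ lie in $(R\otimes_K R)^\tau$. From $1\otimes U=(U\otimes 1)Z$ one sees that $R\otimes_K R$ is generated as a ring by the left copy $R\otimes 1\cong R$ together with the entries of $Z$ and of $\det(Z)^{-1}$. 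Hence the $\tau$-equivariant surjection $R\otimes_k k[X,\det X^{-1}]\twoheadrightarrow R\otimes_K R$, $r\otimes f(X)\mapsto (r\otimes 1)f(Z)$, identifies $R\otimes_K R$ with a quotient $\bigl(R\otimes_k k[X,\det X^{-1}]\bigr)/\mathfrak a$ by a $\tau$-ideal $\mathfrak a$, where $R\otimes_k k[X,\det X^{-1}]$ carries $\tau\otimes\mathrm{id}$ and the variables $X=(x_{ij})$ (corresponding to $Z$) are $\tau$-invariant on the source.

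The crux is the following descent statement, a refinement of Lemma \ref{lemma:LinDisj} which uses Proposition \ref{prop;tausemplicity} essentially: \emph{if $S$ is any $k$-algebra with trivial $\tau$-action, then every $\tau$-ideal of $R\otimes_k S$ (with $\tau=\tau\otimes\mathrm{id}$) has the form $R\otimes_k\mathfrak j$ for a unique ideal $\mathfrak j\subseteq S$.} To prove it I would fix a $k$-basis of $S$ and show that a nonzero $\tau$-ideal $\mathfrak n$ already contains some $1\otimes s$ with $s\neq 0$: pick $a\in\mathfrak n$ of minimal support; each coordinate projection $R\otimes_k S\to R$ is $R$-linear and $\tau$-equivariant, hence sends $\mathfrak n$ to a $\tau$-ideal of $R$, so to $0$ or $R$ ($\tau$-simplicity); normalising one coefficient of a suitable element supported exactly on $\mathrm{supp}(a)$ to $1$, the difference between that element and its $\tau$-transform has strictly smaller support, so vanishes, which forces the remaining coefficients into $R^\tau=k$ and produces the desired $1\otimes s$. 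Passing to $R\otimes_k(S/\mathfrak j)$ and iterating gives $\mathfrak n=R\otimes_k\mathfrak j$. Applied to $\mathfrak a$, this yields $\mathfrak a=R\otimes_k\mathfrak b$ for an ideal $\mathfrak b\subseteq k[X,\det X^{-1}]$, and therefore
$$
R\otimes_K R\ \cong\ R\otimes_k D_0,\qquad D_0:=k[X,\det X^{-1}]/\mathfrak b,
$$
with $\tau$ trivial on the second factor. By Lemma \ref{lemma:LinDisj} this identifies $(R\otimes_K R)^\tau$ with $D_0=k[Z,\det Z^{-1}]$, a finitely generated $k$-algebra and a quotient of $k[\GL_{d,k}]$.

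It remains to establish representability. For a $k$-algebra $B$ I would set up mutually inverse maps between $\Aut^\tau(R\otimes_k B/K\otimes_k B)$ and $\Hom_k\bigl((R\otimes_K R)^\tau,B\bigr)$. Given $\varphi$, the rule $r\otimes s\mapsto (r\otimes 1)\varphi(s\otimes 1)$ is a well-defined ($\varphi$ fixes $K\otimes_k B$), $\tau$-equivariant ($\varphi$ commutes with $\tau$) $K$-algebra homomorphism $R\otimes_K R\to R\otimes_k B$; restricting to $\tau$-invariants and using $(R\otimes_k B)^\tau=B$ (Lemma \ref{lemma:LinDisj}) produces $\psi_\varphi\colon(R\otimes_K R)^\tau\to B$. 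Conversely, given $\psi$, compose the isomorphism $R\otimes_K R\cong R\otimes_k(R\otimes_K R)^\tau$ of the previous paragraph with $\mathrm{id}_R\otimes\psi$, then restrict to the right copy $1\otimes R$, getting a $\tau$-$K$-algebra map $R\to R\otimes_k B$ which extends $B$-linearly to a $K\otimes_k B$-algebra endomorphism $\varphi_\psi$ of $R\otimes_k B$ commuting with $\tau$ (it fixes $K\otimes_k B$ because $x\otimes 1=1\otimes x$ for $x\in K$). One checks that $\psi\mapsto\varphi_\psi$ and $\varphi\mapsto\psi_\varphi$ are mutually inverse and functorial in $B$, and that each $\varphi_\psi$ is in fact an automorphism: the matrix $N:=(U\otimes 1)^{-1}\varphi_\psi(U\otimes 1)$ lies in $\GL_d(B)$ (since $\varphi_\psi(U\otimes 1)$ is invertible and $R\otimes_k B$ is faithfully flat over $B$), and the endomorphism sending $U\otimes 1\mapsto (U\otimes 1)N^{-1}$---well defined because the descent isomorphism above is exactly what endows $D_0\cong(R\otimes_K R)^\tau$ with its (Hopf) group--scheme structure---provides a two-sided inverse. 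Together with the previous paragraph this shows that $\Gal(L/K)$ is represented by $(R\otimes_K R)^\tau$ and is an algebraic group, embedded in $\GL_{d,k}$.

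I expect the real obstacle to be the descent statement in the middle paragraph: proving that $\tau$-ideals of $R\otimes_k S$ are extended from $S$, and carrying this out \emph{without} assuming $R$ a domain---so that one cannot reduce to fraction fields of its components---is precisely the delicate point treated in \cite{OvchinnikovWibmerSGaloisTheoryofLinearDifferenceEquations}. Everything else is formal manipulation of tensor products, resting on the linear-disjointness lemma already in hand.
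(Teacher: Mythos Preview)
Your approach is exactly the one sketched in the paper (and carried out in \cite{OvchinnikovWibmerSGaloisTheoryofLinearDifferenceEquations}): define $Z=(U\otimes 1)^{-1}(1\otimes U)$, use $\tau$-simplicity of $R$ to descend the kernel ideal so that $R\otimes_K R\cong R\otimes_k (R\otimes_K R)^\tau$, and then unwind the representability. Two minor corrections: in this paper $R\subset F$ is always a domain, so your closing worry is moot here; and the reason $N\in\GL_d(B)$ is simply that $\tau(N)=N$ together with Lemma~\ref{lemma:LinDisj}, not faithful flatness---your appeal to a pre-existing Hopf structure to build the inverse is circular as written, but once $N\in\GL_d(B)$ is known, the endomorphism $U\mapsto UN^{-1}$ is well defined on $R\otimes_k B=K[U,\det U^{-1}]\otimes_k B$ by the same descent argument and gives the two-sided inverse directly.
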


\begin{rmk}
We remind that the statement above means that
$(R\otimes_K R)^\tau$ is a finitely generated $k$-algebra and
that the functors $\Gal(L/K)$ and $\hbox{\rm Hom}_{\hbox{\rm $k$-Algebra}}((R\otimes_K R)^\tau, -)$
are naturally isomorphic. 
In particular for any
$k$-algebra $B$, we have can identify $\Gal(L/K)(B)$ with $\hbox{\rm Hom}_{\hbox{\rm $k$-Algebra}}((R\otimes_K R)^\tau, B)$.
\end{rmk}

The proposition above says that there exists an ideal $I$ of the ring of polynomials
$k[X,\det X^{-1}]$, with $X=(x_{i,j})_{i,j=1,\dots,d}$, such that
for any $k$-algebra $B$ the image of the group morphism defined above
$$
\begin{array}{ccc}
  \Gal(L/K)(B)& \to & \GL_d(B) \\
  \varphi & \mapsto & [\varphi]_U:=U^{-1}\tau(U)
\end{array}
$$
is exactly the set of zeros of $I$ in $\GL_d(B)$.
The idea of the proof is to consider the $k$-algebra $k[Z,\det Z^{-1}]\hookrightarrow (R\otimes_kR)^\tau $, where
$Z:=(U^{-1}\otimes 1)(1\otimes U)$. Then one can prove that
we have the series of isomorphisms:
$$
R\otimes_K R\cong R. k[Z,\det Z^{-1}]\cong R\otimes_k\l(R\otimes_K R\r)^\tau.
$$
This allows to prove a series of group isomorphisms showing that for any $k$-algebra $B$ we have:
$$
\Hom_{\hbox{\rm $\tau$-$(K\otimes_k B)$-alg}}(R\otimes_k B, R\otimes_k B)\cong \Hom_{\hbox{\rm $k$-alg}}((R\otimes_K R)^{\tau},B).
$$
See \cite{OvchinnikovWibmerSGaloisTheoryofLinearDifferenceEquations} for details.

\begin{exa}\label{exa:Theta}
Let $F$ be the field of meromorphic functions over $\C^\ast$ and let $\tau$ be defined by $\tau: f(x)\mapsto f(qx)$,
for $q\in \C$, such that $|q|>1$. The field of constants $k$ is the field of meromorphic functions over the torus $\C^\ast/q^\Z$ and we set $K=k(x)$.
The Jacobi Theta function
$$
\Theta(x)=\sum_{x\in\Z}q^{-n(n+1)/2}x^n\in F
$$
is solution of the difference equation $\tau(y)=xy$. Its differential Galois group $G$ is the multiplicative group. Indeed
for any $k$-algebra $B$ and for any $\varphi\in G(B)$, $\varphi$ multiplies $\Theta(x)$ by an invertible element of $B$.
On the other hand, since $\Theta(x)$ is a transcendental function, any invertible constant of $B$ defines an automorphism
of $K[\Theta(x),\Theta(x)^{-1}]\otimes_kB$.
\par
Now let us consider an integer $r\geq 2$  and choose a $r$-th root $q^{1/r}$ of $q$.
The meromorphic function $z(x):=\Theta(q^{1/r}x)/\Theta(x)\in F$
is a solution of the finite difference equation $\tau(y)=q^{1/r}y$ and its difference Galois group
is the cyclic subgroup of $G_{m,k}$ of order $r$.
To prove the last claim it is enough to notice that $z(qx)^r =q z(x)^r$, hence $z(x)^r$ is a meromorphic function
of the form $xg(x)$, with $g(x)\in k\subset K$.
 \end{exa}

\begin{exa}\label{exa:additivegroup}
Let us consider an element $f\in K$ and the inhomogeneous difference equation $\tau(y)=y+f$.
By assumption, there exists a solution $z\in F$ and we have already noticed that $R=k[z]$.
See Example \ref{exa:log}.
For any $k$-algebra $B$ and any $\varphi\in\Gal(L/K)$, the element $\varphi(z)$ of $R\otimes_k B$
must be another solution of $\tau(y)=y+f$, hence there exists $c_\varphi\in B$ such that
$\varphi(z)=z+c_\varphi$ (here we have identified $z$ and $z\otimes 1$).
It follows that $\Gal(L/K)$ is a algebraic subgroup of the additive group $\G_{a,k}$.
This means that either $\Gal(L/K)=\{1\}$ or $\Gal(L/K)=\G_{a,k}$.
\end{exa}

\subsection{Transcendence degree of the \PV\ extension}

We can now state the first important result from the point of view of applications to number theory and
more specifically to transcendence.
It compares the dimension of $G$ over $k$ as an algebraic variety and the transcendence degree of $R$ over $K$
and its proof is based on Proposition \ref{prop:algebraicgp}.

\begin{thm}\emph{\cite[Lemma 2.53]{OvchinnikovWibmerSGaloisTheoryofLinearDifferenceEquations}}
\label{thm:dimension}
Let $R$, $L$ and $G$ be as above. Then the dimension of $G$ as an algebraic variety over $k$ is equal to the transcendence
degree of $R$ (or of $L$) over $K$:
$$
\dim_k G=\hbox{\rm trdeg}_K R=\hbox{\rm trdeg}_K L.
$$
\end{thm}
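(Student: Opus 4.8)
The plan is to exploit the key structural isomorphism from Proposition \ref{prop:algebraicgp}, namely
$$
R\otimes_K R\cong R\otimes_k (R\otimes_K R)^\tau,
$$
together with the fact that $(R\otimes_K R)^\tau=k[G]$ represents the Galois group. The strategy is to compute the Krull dimension of $R\otimes_K R$ in two different ways: once via the left-hand side, which will give $\mathrm{trdeg}_K R$, and once via the right-hand side, which will give $\dim_k G$.

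First I would recall that $R$ is a $\tau$-simple $\tau$-$K$-algebra that is a finitely generated $K$-algebra (generated by the entries of $U$ and $\det U^{-1}$), and, since we work inside $F$ and assume $R\subset F$, that $R$ is a domain. Hence $\mathrm{trdeg}_K R$ is well defined and equals the Krull dimension of $R$ over $K$, i.e. $\dim R-\dim K$ in the sense of relative dimension; more precisely, for a finitely generated domain over the field $K$, the transcendence degree of its fraction field over $K$ equals its Krull dimension. Next I would analyze the ring $R\otimes_K R$. Viewing it as $R'\otimes_{K} R$ where the first factor $R'=R$ is regarded merely as a $K$-algebra (a domain, finitely generated), base change along $K\hookrightarrow R$ (or rather along $R\hookrightarrow L=\mathrm{Frac}(R)$, then clearing denominators) shows that $R\otimes_K R$ is a finitely generated $R$-algebra whose relative dimension over $R$ equals $\mathrm{trdeg}_K L=\mathrm{trdeg}_K R$: indeed $L\otimes_K L$ is a localization of $R\otimes_K R$, and $L\otimes_K R$ has Krull dimension over $L$ equal to $\mathrm{trdeg}_K R$ because $R$ is a finitely generated $K$-algebra of that transcendence degree. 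The upshot is
$$
\dim(R\otimes_K R)-\dim R=\mathrm{trdeg}_K R.
$$

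On the other hand, using the isomorphism $R\otimes_K R\cong R\otimes_k k[G]$ from Proposition \ref{prop:algebraicgp}, I would argue that the relative dimension of $R\otimes_k k[G]$ over $R$ equals $\dim_k k[G]=\dim_k G$. This again rests on a flat (here even faithfully flat, since $k$ is a field) base change: $k[G]$ is a finitely generated $k$-algebra of Krull dimension $\dim_k G$, and tensoring up to the field $\mathrm{Frac}(R)$ preserves dimension for finitely generated algebras over a field; passing from $\mathrm{Frac}(R)$ back to $R$ contributes exactly $\dim R$. Comparing the two computations of $\dim(R\otimes_K R)-\dim R$ yields $\mathrm{trdeg}_K R=\dim_k G$, and since $\mathrm{trdeg}_K R=\mathrm{trdeg}_K L$ trivially (they have the same fraction field, $L$ being obtained from $R$ by inverting nonzero elements), the theorem follows. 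For the citation-minded reader, this is \cite[Lemma 2.53]{OvchinnikovWibmerSGaloisTheoryofLinearDifferenceEquations}, and one could also simply invoke that reference.

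The main obstacle I anticipate is making the dimension bookkeeping rigorous when the base $K$ is not algebraically closed and $k$ need not be either: one must be careful that ``transcendence degree'' and ``Krull dimension'' coincide in the relevant generality, and that the base change arguments above do not silently assume a geometrically integral situation. Concretely, the delicate point is that $R\otimes_K R$ (equivalently $R\otimes_k k[G]$) need not be a domain — $k[G]$ can fail to be geometrically connected over $k$ when $k$ is not algebraically closed — so I would phrase everything in terms of the supremum of dimensions over irreducible components, and note that the relative-dimension identities above hold componentwise because the relevant morphisms ($R\to R\otimes_K R$ on the one side, the structure map coming from $k\to R$ on the other) are flat with equidimensional fibres. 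Once that is handled, the two computations of $\dim(R\otimes_K R)$ relative to $R$ match term by term and the equality of numbers drops out.
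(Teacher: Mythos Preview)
Your approach is correct and is precisely the standard argument: use the isomorphism $R\otimes_K R\cong R\otimes_k k[G]$ recorded in (the discussion after) Proposition~\ref{prop:algebraicgp}, localize at the first factor to pass to $L\otimes_K R\cong L\otimes_k k[G]$, and compare Krull dimensions of finitely generated algebras over the field $L$ on both sides. Your caveat about non-integrality is well placed and correctly resolved, since Krull dimension of a finitely generated algebra over a field is invariant under field extension regardless of integrality.

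As for the comparison with the paper: the paper does not give its own proof of this theorem at all---it simply records the statement and cites \cite[Lemma~2.53]{OvchinnikovWibmerSGaloisTheoryofLinearDifferenceEquations}. Your sketch is essentially the argument that reference carries out, so there is nothing to contrast; you have supplied what the paper deliberately omits.
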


\begin{exa}\label{exa:Gamma}
Let us consider the case of finite difference equations, i.e, let $F$ be the field of meromorphic functions over $\C$
equipped with the operator $\tau : f(x)\mapsto f(x+1)$. Then $k$ is the field of $1$-periodic functions and we set $K=k(x)$.
We consider the difference equations $\tau(y)=x y$, which is satisfied by the Euler Gamma function $\Gamma(x)$.
Its Picard-Vessiot ring is $K[\Gamma(x),\Gamma(x)^{-1}]$, as discussed in Example \ref{exa: order1-PV}.
As in Example \ref{exa: order1}, its Galois group $G$ is a subgroup of $\G_{m,k}$.
For any $k$-algebra $B$ and any $\varphi\in G(B)$, there exists an invertible element $c_\varphi$ of $B$ such that $\varphi(\Gamma)=c_\varphi\Gamma$.
As already explained, the proper subgroups of  $\G_{m,k}$ are the finite cyclic groups. If $G$ was a the cyclic group,
$\Gamma(x)$ would be an algebraic function, by the previous theorem. Proving that the functional equations of the Gamma function implies that
it is not algebraic over $k(x)$ is an exercise that we leave to the reader.
Therefore
the difference Galois group $G$ is the whole $\G_{m,k}$.
\end{exa}

\begin{exa}
Let us consider the system
$$
\tau(Y)=\begin{pmatrix}
          x & 1 & 0 \\
          0 & x & 1 \\
          0 & 0 & x
        \end{pmatrix}Y.
$$
We denote by $(\cdot)^\p$ the derivation $\frac d{dx}$, with respect to $x$.
Since $\tau$ and $\frac{d}{dx}$ commute, we have:
$$
\tau\bigg(\Gamma^\p(x)\bigg)=\frac{d}{dx}\bigg(x\Gamma(x)\bigg)=x\Gamma^\p(x)+\Gamma(x)
$$
and
$$
\tau\l(\frac{\Gamma^{\p\p}(x)}{2}\r)=\frac{1}{2}\frac{d}{dx}\bigg(x\Gamma^\p(x)+\Gamma(x)\bigg)=x\frac{\Gamma^{\p\p}(x)}{2}+\Gamma^\p(x).
$$
Therefore a solution matrix is given by
$$
Y=\begin{pmatrix}
  \Gamma(x) & \Gamma^\p(x) &\Gamma^{\p\p}(x)/2 \\
  0 & \Gamma(x) & \Gamma^\p(x) \\
  0 & 0 & \Gamma(x)
\end{pmatrix},
$$
so that the associated \PV\ ring is
$R= K\big[\Gamma(x), \Gamma^\p(x) ,\Gamma^{\p\p}(x), \Gamma(x)^{-1}\big]$.
Let $G$ be its difference Galois group and let $B$ be a $k$-algebra. For any $\varphi\in G(B)$, $\varphi$ commutes to the action of $\tau$
over $R\otimes_kB$, therefore it must send any element of $R$, which is solution of a $\tau$-difference equation, into a solution of the same  equation.
We know that $\Gamma$ is solution of a homogenous order $1$ equation,
while $\frac{\Gamma^\p(x)}{\Gamma(x)}$ and $\frac{d}{dx}\l(\frac{\Gamma^\p(x)}{\Gamma(x)}\r)$
are solutions inhomogeneous order $1$ equations.
Therefore, as in Examples \ref{exa: order1} and \ref{exa:additivegroup},
there must exist $c_\varphi\in\G_{m,k}(B)$ and $(d_{1,\varphi},d_{2,\varphi})\in \G_{a,k}(B)^2$ such that
$$
\l\{\begin{array}{l}
  \varphi\big(\Gamma(x)\big)=c_\varphi\Gamma(x),\\~\\
  \ds \varphi\l(\frac{\Gamma^\p(x)}{\Gamma(x)}\r)=\frac{\Gamma^\p(x)}{\Gamma(x)}+d_{1,\varphi},\\~\\
  \ds \varphi\l(\frac{d}{dx}\l(\frac{\Gamma^\p(x)}{\Gamma(x)}\r)\r)=\frac{d}{dx}\l(\frac{\Gamma^\p(x)}{\Gamma(x)}\r)+d_{2,\varphi}
    =\frac{\Gamma^{\p\p}(x)}{\Gamma(x)}-\frac{\Gamma^\p(x)}{\Gamma^2(x)}+d_{2,\varphi}.
\end{array}\r.
$$
So we have represented $G$ as a subgroup of $\G_{m,k}\times \G_{a,k}^2$.
Since $\Gamma(x), \Gamma^\p(x), \Gamma^{\p\p}(x)$ are algebraically independent by Hölder theorem \cite{Holder1887}
(see Proposition \ref{prop:Holder} below for a proof), it is actually an isomorphism, thanks to Theorem \ref{thm:dimension}.
The construction above can be easily generalized to system associated to a Jordan block of eigenvalue $x$ and order higher than 3,
using higher order derivatives of $\Gamma$.
\par
Now let us consider the
finite difference equation $(\tau-x)^n(y)=0$, for some positive integer $n$. Such an equation occurs in
generalized Carlitz modules studied in \cite{Hellegouarch-Recher-gen-t-modules} and more extensively in \cite{PellarinGenCarlitz}.
We have seen that $\Gamma$ is a solution for $n=1$. One can verify recursively that,
for $n>1$, a basis of solution over $k$ is given by the higher order derivatives of $\Gamma$ with respect to $x$,
namely the set
$\Gamma(x),\Gamma^\p(x),\dots,\Gamma^{(n-1)}(x)$.
It follows that the \PV\ ring is the same as the one of the system above, namely
$R=\big[\Gamma(x),\Gamma^\p(x) ,\dots,\Gamma^{(n-1)}(x), \Gamma(x)^{-1}\big]$.
We conclude that the
Galois group is $\G_{m,k}\times \G_{a,k}^{n-1}$.
\end{exa}

Before being able to prove the most common results used in the applications, we need to state the Galois correspondence and its properties.

\section{The Galois correspondence (first part)}

We consider $R,L\subset F$ and $G:=\Gal(L/K)$ as above.

\begin{defn}
Let $\frac r s\in L$, with $r,s\in R$ and $s\neq 0$, and let $\varphi\in G(B)$, for a $k$-algebra $B$.
We say that $\frac r s$ is invariant under the action of $\varphi$ if in $R\otimes_k B$ we have:
$$
\varphi(r\otimes 1)(s\otimes 1)=(r\otimes 1)\varphi(s\otimes 1).
$$
If $H$ is an algebraic subgroup of $G$ defined over $k$, then $\frac r s$ is invariant under the action of $H$ if
$\frac r s$ is invariant under the action of $\varphi$, for all $k$-algebras $B$ and all $\varphi\in H(B)$.
\par
We denote by $L^H$ the set of elements of $L$ invariant under the action of $H$.
\end{defn}

\begin{rmk}
Notice that if $\varphi\in G(k)$, then the condition above simply means that $\frac{\varphi(r)}{\varphi(s)}=
\frac{r}{s}$ in $L$.
\end{rmk}

If $M$ is an intermediate field of $L/K$, stable by $\tau$, then we can consider
\eqref{eq:system} as a system defined over $M$.
Indeed if $L$ is a \PV\ field over $K$, it must be a \PV\ field over $M$
and we can define $\Gal(L/M)$.

\begin{thm}\label{thm:GaloisCorrespondence1}
\emph{\cite[2.52]{OvchinnikovWibmerSGaloisTheoryofLinearDifferenceEquations}}
There exists a one-to-one correspondence between the algebraic subgroup of $G$ defined over $k$ and the
intermediate fields of $L/K$ stable by $\tau$. In the notation above we have two maps that are one the inverse of the other and are defined by:
$$
H\mapsto L^H,\ M\mapsto\Gal(L/M).
$$
Moreover if $H$ is an algebraic subgroup of $G$, then $H=G$ if and only if $L^H=K$.
\end{thm}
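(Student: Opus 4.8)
The plan is to check that both assignments are well defined, prove that they are mutually inverse, and then read off the last assertion. For well-definedness, note first that $L^H$ is stable under the field operations (the invariance condition of the preceding definition is preserved by sums, products and inverses) and under $\tau$ (because $\tau$ commutes with every element of every $G(B)$), so $L^H$ is an intermediate $\tau$-field. Conversely, for an intermediate $\tau$-field $M$ we have $L=M(V)$ for a fundamental solution $V\in\GL_d(L)$ of \eqref{eq:system} and $L^\tau=k=M^\tau$, so $L$ is a \PV\ field over $M$; Proposition \ref{prop:algebraicgp} applied over $M$ then shows that $\Gal(L/M)$ is an algebraic group, and restriction of automorphisms identifies it with an algebraic subgroup of $G$. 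Finally $H\subseteq\Gal(L/L^H)$ and $M\subseteq L^{\Gal(L/M)}$ are immediate from the definitions, so everything reduces to the reverse inclusions.

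I would isolate the key case $L^G=K$ and in fact prove the relative statement $L^{\Gal(L/M)}=M$ for every intermediate $\tau$-field $M$; with $M=K$ this is the displayed equality of the theorem, and in general it already gives the round trip $M\mapsto\Gal(L/M)\mapsto L^{\Gal(L/M)}=M$. Fix $M$, let $R_M\subset F$ be the \PV\ ring of \eqref{eq:system} over $M$ and $G_M=\Gal(L/M)$. By the relative version of the computation sketched after Proposition \ref{prop:algebraicgp}, one has the torsor isomorphism $R_M\otimes_M R_M\cong R_M\otimes_k k[G_M]$ with $k[G_M]=(R_M\otimes_M R_M)^\tau$; this expresses that $\operatorname{Spec}R_M$ is a $G_M$-torsor over $\operatorname{Spec}M$, and the invariance of $a=r/s\in L$ ($r,s\in R_M$) under the universal automorphism $\gamma\in G_M(k[G_M])$ translates, under this isomorphism, into the equality $s\otimes r=r\otimes s$ in $R_M\otimes_M R_M$, hence into $a\otimes 1=1\otimes a$ in $L\otimes_M L$ (using that $R_M$, being a subring of the field $F$, is a domain, so $R_M\otimes_M R_M\hookrightarrow L\otimes_M L$). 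But the equalizer of the two maps $L\rightrightarrows L\otimes_M L$ is exactly $M$ (pick an $M$-basis of $L$ containing $1$). Since invariance need only be tested on the universal point, $L^{G_M}=M$.

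It remains to treat the round trip $H\mapsto L^H\mapsto\Gal(L/L^H)$. The inclusion $H\subseteq\Gal(L/L^H)$ is automatic; moreover $L^H$ is fixed by $\Gal(L/L^H)$, so the previous paragraph applied to $M=L^H$ gives $L^{\Gal(L/L^H)}=L^H$. Hence $H$ and $\Gal(L/L^H)$ are algebraic subgroups of $G$ with the same invariant field, and it suffices to show that $H\mapsto L^H$ is injective, equivalently: a \emph{proper} algebraic subgroup $H$ of an algebraic subgroup $G'$ of $G$ satisfies $L^H\supsetneq L^{G'}$. This is the heart of the proof and the step I expect to be the real obstacle. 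Following \cite[\S2.7]{OvchinnikovWibmerSGaloisTheoryofLinearDifferenceEquations}, one uses that the quotient Hopf algebra $k[G']\twoheadrightarrow k[H]$ has a nonzero kernel $I$, a proper ideal (as $k[H]\neq 0$) meeting $k\smallsetminus\{0\}$ trivially; realizing $I\subseteq k[G']=(R'\otimes_{M'}R')^\tau$ with $M'=L^{G'}$ and $R'\subset F$ the \PV\ ring over $M'$, one picks $0\neq f\in I$, writes it as $\sum_i a_i\otimes b_i$ with the $a_i$ linearly independent over $M'$ and the number of summands minimal — the bookkeeping is that of the proof of Lemma \ref{lemma:LinDisj} — and extracts from the $b_i\in R'\subseteq L$ an $H$-invariant element outside $M'$, contradicting $L^H=L^{G'}$.

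Putting these together, $H\mapsto L^H$ and $M\mapsto\Gal(L/M)$ are mutually inverse bijections. For the last assertion: if $L^H=K$ then $L^H=K=L^G$ by the second paragraph, so $H=G$ by injectivity, and the converse is again the equality $L^G=K$. In summary, the only genuinely substantial point is the injectivity of $H\mapsto L^H$ — that passing to a proper algebraic subgroup strictly enlarges the field of invariants — and it rests on exploiting the torsor isomorphism $R\otimes_K R\cong R\otimes_k k[G]$ rather than on any soft dimension or cardinality argument.
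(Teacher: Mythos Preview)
The paper does not supply its own proof of this theorem: it is stated with a bare citation to \cite[2.52]{OvchinnikovWibmerSGaloisTheoryofLinearDifferenceEquations} and immediately followed by the remark that ``the last statement is the key-point of the Galois correspondence,'' after which the text moves on to applications. So there is no argument in the paper to compare against.

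Your sketch is, in outline, the standard torsor proof and is indeed the route taken in the cited reference: one shows $L^{\Gal(L/M)}=M$ via the isomorphism $R_M\otimes_M R_M\cong R_M\otimes_k k[G_M]$ and the equalizer description of $M$ inside $L\otimes_M L$, and then handles the other round trip by showing that a strict inclusion $H\subsetneq G'$ forces $L^H\supsetneq L^{G'}$ using the kernel of $k[G']\twoheadrightarrow k[H]$. Two comments. First, your reduction of $G_M$-invariance to invariance under the single universal element $\gamma\in G_M(k[G_M])$ is correct by Yoneda, but it is worth making explicit that the invariance condition of the paper's definition (an equality in $R_M\otimes_k B$) becomes, for $B=k[G_M]$ and under the torsor isomorphism, precisely $r\otimes s=s\otimes r$ in $R_M\otimes_M R_M$. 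Second, the genuinely incomplete step is the one you flag yourself: from $0\neq f=\sum_i a_i\otimes b_i\in I\subset (R'\otimes_{M'}R')^\tau$ you assert one ``extracts from the $b_i$ an $H$-invariant element outside $M'$,'' but you do not say how. In the argument of \cite{OvchinnikovWibmerSGaloisTheoryofLinearDifferenceEquations} this is done by transporting $I$ through the torsor isomorphism to a nonzero $\tau$-ideal of $R'\otimes_k k[H]$, using $\tau$-simplicity of $R'$ (Proposition~\ref{prop;tausemplicity}) together with a minimal-length argument of exactly the type in Lemma~\ref{lemma:LinDisj} to force a nontrivial $H$-invariant in $L\smallsetminus M'$. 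Your pointer to that lemma is apt, but as written the extraction is asserted rather than carried out.
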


As usual in Galois theories, the last statement is the key-point of the Galois correspondence.

\section{Application to transcendence and differential transcendence}

\subsection{General statements}

From the theorems above we deduce a first result on transcendency that is very useful in many settings.
The criteria of transcendence below are contained in \cite[\S3]{CharlotteSinger:deltadeltapi}, up to some reformulations.
They are the key-point of several applications, for instance in
\cite{dreyfus-hardouin-rosuqes-jems},
\cite{dreyfus-hardouin-roques},
\cite{Dreyfus-Hardouin-Roques-Singer-genus1},
\cite{Dreyfus-Hardouin-Roques-Singer-genus0},
\cite{Dreyfus-Hardouin-t}.
\par
In the proposition below the assumption $k=F^\tau=K^\tau$ is crucial as well as in all the subsequent results in this section.

\begin{prop}\label{prop:transcendenceINH}
Let $f_1,\dots, f_d\in K^\ast$ and let $z_1,\dots,z_d\in F$ be a solution of the
following inhomogeneous difference
system:
\beq\label{eq:transcendenceINH}
\Big\{\tau(z_i)=z_i+f_i,\hbox{~for $i=1,\dots,d$.}
\eeq
The following assertions are equivalent:
\begin{enumerate}
  \item There exist $\la_1,\dots,\la_d\in k$, not all zero, and $g\in K$ such that $\la_1f_1+\dots+\la_d f_d=\tau(g)-g$.
  \item There exist $\la_1,\dots,\la_d\in k$, not all zero, such that $\la_1z_1+\dots+\la_d z_d\in K$.
  \item There exist $\la_1,\dots,\la_d\in K$, not all zero, such that $\la_1z_1+\dots+\la_d z_d\in K$.
  \item $z_1,\dots,z_d$ are algebraically dependent over $K$.
\end{enumerate}
\end{prop}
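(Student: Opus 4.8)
The plan is to prove the cycle of implications $(1)\Rightarrow(2)\Rightarrow(3)\Rightarrow(4)\Rightarrow(1)$, which is the most economical route since $(2)\Rightarrow(3)$ is trivial (constants in $k$ are in particular constants in $K$, or rather: the statement is literally weaker). For $(1)\Rightarrow(2)$, suppose $\sum_i\la_i f_i=\tau(g)-g$ with $\la_i\in k$ not all zero. Then $w:=\sum_i\la_i z_i-g$ satisfies $\tau(w)=\sum_i\la_i(z_i+f_i)-\tau(g)=w+\big(\sum_i\la_i f_i-(\tau(g)-g)\big)=w$, so $w\in F^\tau=k\subset K$, hence $\sum_i\la_i z_i=w+g\in K$. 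This is immediate.

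The substantive implication is $(4)\Rightarrow(1)$, and this is where I expect the main work. The strategy is to use the Galois correspondence (Theorem \ref{thm:GaloisCorrespondence1}) together with the description of subgroups of $\G_{a,k}^d$ recorded in the digression on group schemes. First I would set up the Picard-Vessiot machine: the system \eqref{eq:transcendenceINH} is equivalent to a block-triangular linear system of size $d+1$ whose fundamental solution matrix has the $z_i$ in the last column and $1$'s on the diagonal, exactly as in Example \ref{exa:log} assembled diagonally. The associated \PV\ ring is $R=K[z_1,\dots,z_d]$ and $L=K(z_1,\dots,z_d)$. As in Example \ref{exa:additivegroup}, for any $k$-algebra $B$ and $\varphi\in G(B)$ each $\varphi(z_i)=z_i+c_{i,\varphi}$ with $c_{i,\varphi}\in B$, and $\varphi\mapsto(c_{1,\varphi},\dots,c_{d,\varphi})$ identifies $G=\Gal(L/K)$ with an algebraic subgroup of $\G_{a,k}^d$. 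Now assume $(4)$: $z_1,\dots,z_d$ are algebraically dependent over $K$, so $\mathrm{trdeg}_K L<d$, hence by Theorem \ref{thm:dimension} $\dim_k G<d$, so $G$ is a \emph{proper} algebraic subgroup of $\G_{a,k}^d$. By the description quoted in the excerpt, $G$ is then contained in the subgroup cut out by a single nonzero linear form: there exist $\la_1,\dots,\la_d\in k$, not all zero, with $\sum_i\la_i c_{i,\varphi}=0$ for all $B$ and all $\varphi\in G(B)$. This says precisely that the element $y:=\sum_i\la_i z_i\in L$ is fixed by $G$: indeed $\varphi(y)=\sum_i\la_i(z_i+c_{i,\varphi})=y$. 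By the Galois correspondence, $L^G=K$, so $y\in K$; call it $g:=\sum_i\la_i z_i\in K$. Finally apply $\tau$: $\tau(g)=\sum_i\la_i(z_i+f_i)=g+\sum_i\la_i f_i$, hence $\sum_i\la_i f_i=\tau(g)-g$, which is $(1)$.

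The remaining implication $(3)\Rightarrow(1)$ (or, if one prefers to close the cycle differently, $(3)\Rightarrow(4)$, which is trivial, so the cycle $(1)\Rightarrow(2)\Rightarrow(3)\Rightarrow(4)\Rightarrow(1)$ already suffices) requires nothing new. The one point demanding care — and the place I expect a careful writer to slow down — is the passage from ``$G\subset\G_{a,k}^d$ is proper'' to ``$G$ lies in the kernel of a $k$-linear form'': this is exactly the fact recalled in the excerpt that a proper algebraic subgroup of $\G_{a,k}^d$ is contained in the hyperplane subgroup defined by $\frac{k[x_1,\dots,x_d]}{(\a_1x_1+\dots+\a_d x_d)}$ for suitable $\a_i\in k$ not all zero, and it genuinely uses that $k$ is a field (so that the relevant Hopf ideal is generated by linear forms). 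One should also note where the hypothesis $K^\tau=F^\tau=k$ enters: it is used twice, first to guarantee (via Proposition \ref{prop;tausemplicity} and Definition \ref{defn:PV}) that $R=K[z_1,\dots,z_d]$ really is a \PV\ ring with the stated Galois group, and second in the step $w\in F^\tau=k\subset K$ of $(1)\Rightarrow(2)$ — if $F$ had extra constants not in $K$ one would only get $\sum\la_i z_i\in K\cdot F^\tau$, not in $K$, which is the phenomenon illustrated by Example \ref{exa:constants}. I would remark on this explicitly, as the proposition's preamble flags it.
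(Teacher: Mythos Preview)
Your proof is correct and follows essentially the same approach as the paper: the cycle $(1)\Rightarrow(2)\Rightarrow(3)\Rightarrow(4)\Rightarrow(1)$, with the substantive step $(4)\Rightarrow(1)$ done by embedding the Galois group into $\G_{a,k}^d$, invoking Theorem~\ref{thm:dimension} to get a proper subgroup, using the hyperplane description of such subgroups to produce the $\la_i$, and then applying the Galois correspondence to conclude $\sum_i\la_i z_i\in K$. The only cosmetic difference is that the paper packages the system as a $2d\times 2d$ block-diagonal matrix of $2\times 2$ blocks rather than your $(d{+}1)\times(d{+}1)$ triangular matrix, but this yields the same \PV\ ring and the same argument.
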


\begin{rmk}
Notice that the first statement above is about the $f_i$'s, while the others are about the $z_i$'s.
\end{rmk}

\begin{proof}
Let us assume that we are in the situation of the first assertion.
We have:
$$
\tau\l(\sum_{i=1}^{d}\la_iz_i-g\r)=\sum_{i=1}^{d}\la_iz_i-\tau(g)+\sum_{i=1}^{d}\la_if_i=\sum_{i=1}^{d}\la_iz_i-g,
$$
hence $\sum_{i=1}^{d}\la_iz_i-g\in k$, which proves 2.
Moreover, the implications $2\Rightarrow 3\Rightarrow 4$ are tautological.
\par
We conclude by proving that $4\Rightarrow 1$.
As in Example \ref{exa:additivegroup}, the system \eqref{eq:transcendenceINH} is equivalent to the following linear system
of order $2d$:
$$
\tau(Y)=\begin{pmatrix}
          \fbox{$\begin{matrix}1 & f_1 \\0 & 1\end{matrix}$} & 0 & \cdots & 0 \\
           0 & \fbox{$\begin{matrix}1 & f_2 \\0 & 1\end{matrix}$} &\ddots  &\vdots  \\
           \vdots & \ddots & \ddots & 0 \\
           0 & \cdots & 0 & \fbox{$\begin{matrix}1 & f_d \\0 & 1\end{matrix}$}
        \end{pmatrix}Y,
$$
so that its \PV\ ring is $R=k[z_1,\dots,z_d]$. It follows that $\Gal(L/K)$ is an algebraic subgroup of $\G_{a,k}^d$, defined over $k$, and that
for any $k$-algebra $B$ and any
$\varphi\in\Gal(L/K)(B)$ there exists $c_{\varphi,1},\dots,c_{\varphi,d}\in B$ such that
$\varphi(z_i)=z_i+c_{\varphi,i}$.
\par
Since $z_1,\dots,z_d$ are algebraically dependent over $K$, Theorem \ref{thm:dimension} implies that the dimension of $\Gal(L/K)$
is strictly smaller than $d$ and  hence $\Gal(L/K)$ is a proper subgroup scheme of $\G_{a,k}^d$.
All the proper algebraic subgroup of $\G_{a, k}^d$ are contained in a hyperplane and $\Gal(L/K)$ is defined over $k$,
therefore there exist $\la_1,\dots,\la_d\in k$, not all zero, such that for any $k$-algebra $B$ and any
$\varphi\in\Gal(L/K)(B)$, we have $\sum_{i=1}^{d}\la_i c_{\varphi,i}=0$.
We conclude that $g:=\sum_{i=1}^{d}\la_iz_i$ verifies
$$
\varphi(g)=\sum_{i=1}^{d}\la_iz_i+\sum_{i=1}^{d}\la_ic_{\varphi,i}=\sum_{i=1}^{d}\la_iz_i=g,
$$
and hence that $g\in K$, by the Galois correspondence.
Finally we have:
$$
\tau(g)-g=\tau\l(\sum_{i=1}^{d}\la_iz_i\r)-\sum_{i=1}^{d}\la_iz_i=\sum_{i=1}^{d}\la_if_i.
\$$endproof

In an analogous way, taking into account that the algebraic subgroup of $\G_{m,k}^d$
are defined by equations of the form $x_1^{\a_1}\cdots x_d^{\a_d}=1$,
for some $\a_1,\dots\a_d\in\Z$, it is possible to prove the following proposition:

\begin{prop}\label{prop:AlgDepHomo}
Let $a_1,\dots a_d\in K^\ast$ and let $z_1,\dots,z_d\in F^\ast$ be a solution of the
following difference
system:
\beq\label{eq:transcendenceH}
\Big\{\tau(z_i)=a_iz_i,\hbox{~for $i=1,\dots,d$.}
\eeq
The following assertions are equivalent:
\begin{enumerate}
  \item There exist $\la_1,\dots,\la_d\in \Z$, not all zero, and $g\in K$ such that $a_1^{\la_1}\cdots a_d^{\la_d }=\tau(g)/g$.
  \item There exist $\la_1,\dots,\la_d\in \Z$, not all zero, such that $z_1^{\la_1}\cdots z_d^{\la_d }\in K$.
  \item $z_1,\dots,z_d$ are algebraically dependent over $K$.
\end{enumerate}
\end{prop}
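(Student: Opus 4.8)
The plan is to follow the template of the proof of Proposition~\ref{prop:transcendenceINH}, replacing the additive group $\G_{a,k}^d$ by the torus $\G_{m,k}^d$, linear forms by Laurent monomials, and the fact that a proper algebraic subgroup of $\G_{a,k}^d$ lies in a hyperplane by the analogous fact for $\G_{m,k}^d$. Concretely I would prove the cycle of implications $1\Rightarrow 2\Rightarrow 3\Rightarrow 1$.

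For $1\Rightarrow 2$: given $\la_1,\dots,\la_d\in\Z$ not all zero and $g\in K^\ast$ with $a_1^{\la_1}\cdots a_d^{\la_d}=\tau(g)/g$, I would set $w:=g^{-1}\prod_{i=1}^d z_i^{\la_i}\in F^\ast$; then
$$
\tau(w)=\tau(g)^{-1}\prod_{i=1}^d(a_iz_i)^{\la_i}=\tau(g)^{-1}\l(\prod_{i=1}^d a_i^{\la_i}\r)\prod_{i=1}^d z_i^{\la_i}=g^{-1}\prod_{i=1}^d z_i^{\la_i}=w,
$$
so $w\in F^\tau=k\subseteq K$ and $\prod_i z_i^{\la_i}=wg\in K$. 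For $2\Rightarrow 3$: from $\prod_i z_i^{\la_i}=c\in K^\ast$ with the $\la_i$ not all zero, clearing the negative exponents yields $\prod_{\la_i>0}z_i^{\la_i}=c\prod_{\la_i<0}z_i^{-\la_i}$; the corresponding element of $K[x_1,\dots,x_d]$ is a nonzero polynomial, because the two monomials involve disjoint sets of variables and hence can be equal only if all $\la_i=0$, so $z_1,\dots,z_d$ are algebraically dependent over $K$.

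The only substantial implication is $3\Rightarrow 1$. As in Example~\ref{exa: order1} and in the proof of Proposition~\ref{prop:transcendenceINH}, the system \eqref{eq:transcendenceH} is equivalent to $\tau(Y)=\mathrm{diag}(a_1,\dots,a_d)Y$, its \PV\ ring is $R=K[z_1,z_1^{-1},\dots,z_d,z_d^{-1}]$, and for every $k$-algebra $B$ and every $\varphi\in\Gal(L/K)(B)$ there are $c_{\varphi,1},\dots,c_{\varphi,d}\in B^\ast$ with $\varphi(z_i)=c_{\varphi,i}z_i$; this realises $G:=\Gal(L/K)$ as an algebraic subgroup of $\G_{m,k}^d$ defined over $k$. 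Since $z_1,\dots,z_d$ are algebraically dependent over $K$, Theorem~\ref{thm:dimension} gives $\dim_k G<d$, so $G$ is a proper subgroup of $\G_{m,k}^d$; by the classification of algebraic subgroups of $\G_{m,k}^d$ recalled above (they are cut out by monomial equations $x_1^{\a_1}\cdots x_d^{\a_d}=1$ with $\a_i\in\Z$), a proper one lies in $\{x_1^{\a_1}\cdots x_d^{\a_d}=1\}$ for some $(\a_1,\dots,\a_d)\in\Z^d\smallsetminus\{0\}$. Hence $\prod_i c_{\varphi,i}^{\a_i}=1$ for every $B$ and every $\varphi\in G(B)$, so the unit $g:=\prod_i z_i^{\a_i}\in R$ is invariant under $G$; the Galois correspondence (Theorem~\ref{thm:GaloisCorrespondence1}, which gives $L^G=K$) forces $g\in K$, and then $\tau(g)/g=\prod_i(a_iz_i)^{\a_i}/\prod_i z_i^{\a_i}=\prod_i a_i^{\a_i}$, which is assertion~1 with $\la_i=\a_i$.

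I expect the only delicate point to be, inside $3\Rightarrow 1$, the step from ``$G$ is a proper algebraic subgroup of the torus'' to ``$G$ is contained in a single nontrivial monomial subgroup $\{x_1^{\a_1}\cdots x_d^{\a_d}=1\}$''; this is exactly what the description of the subgroups of $\G_{m,k}^n$ in the digression on group schemes provides (equivalently, it is the character-lattice description of diagonalizable groups), so it is available, and since every monomial subgroup is already defined over the prime field there is no compatibility issue with ``$G$ defined over $k$''. Everything else runs in parallel with the additive case.
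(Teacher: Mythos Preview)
Your proof is correct and follows precisely the approach the paper itself indicates: the paper does not write out a proof of this proposition but states just before it that ``In an analogous way, taking into account that the algebraic subgroup of $\G_{m,k}^d$ are defined by equations of the form $x_1^{\a_1}\cdots x_d^{\a_d}=1$, \dots, it is possible to prove the following proposition.'' Your argument is exactly that analogue of the proof of Proposition~\ref{prop:transcendenceINH}, with the same use of Theorem~\ref{thm:dimension} and the Galois correspondence, and the delicate point you flag is indeed handled by the description of subgroups of $\G_{m,k}^d$ given in the digression on group schemes.
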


\begin{rmk}
Notice that Proposition \ref{prop:AlgDepHomo} has one more characterization of the algebraic dependency of the solutions.
Here we only have $3$ assertions because of the multiplicative form of the algebraic subgroups of $\G_{m,k}^d$.
\end{rmk}

\subsection{Differential algebraicity and $D$-finiteness}
\label{subsec:Dfinite}

We now switch our attention to the characterization of  differential algebraicity, hence
in this subsection we assume that we are in characteristic zero.
We assume that the field $F$ comes equipped
with a derivation $\partial$ that commutes with the endomorphism $\tau$. This implies in particular that
$\partial$ induces a derivation on both $k$ and $K$.

\begin{exa}
In the notation of Example \ref{exa:basic}, we can take $\partial=\frac d {dx}$ for $\tau: f(x)\mapsto f(x+1)$ and
and $\partial=x\frac d{dx}$ for $\tau:f(x)\mapsto f(qx)$.
\end{exa}

We recall the following definition:

\begin{defn}
We say that $f\in F$ is differentially algebraic (with respect to $\partial$) over $K$, if
there exists an integer $n\geq 0$ such that $f,\partial(f),\dots,\partial^n(f)$ are algebraically dependent over $K$, or,
equivalently,  if $f$ is solution of an algebraic differential equation over $K$.
We say that $f$ is differentially transcendental over $K$ if it is not differentially algebraic over $K$ and that
it is $D$-finite over $K$ if it is differentially algebraic and, moreover, it is the solution  of a linear differential equation
with coefficients in $K$.
\par
We say that $F$ is differentially algebraic over $K$ is all elements of $F$ are differentially algebraic over
$k$.
\end{defn}

\noindent
Once again, the assumption $F^\tau=K^\tau$ is crucial in the following corollaries:

\begin{cor}\label{cor:DiffAlgGa}
Let $f\in K^\ast$ and $z\in F$ be  a solution of $\tau(y)=y+f$. The following statements are equivalent:
\begin{enumerate}
  \item There exist $n\geq 0$, $\la_0,\dots,\la_n\in k$, not all zero, and $g\in K$ such that $\la_0f+\la_1\partial(f)+\dots+\la_n \partial^n(f)=\tau(g)-g$.
  \item There exist $n\geq 0$, $\la_0,\dots,\la_n\in k$, not all zero, such that $\la_0z+\la_1\partial(z)+\dots+\la_n \partial^n(z)\in K$.
  \item $z$ is $D$-finite over $K$.
  \item $z$ is differentially algebraic over $K$.
\end{enumerate}
\end{cor}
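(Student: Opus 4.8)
The plan is to reduce Corollary \ref{cor:DiffAlgGa} to Proposition \ref{prop:transcendenceINH} by differentiating the difference equation. First I would record the basic observation that, since $\partial$ commutes with $\tau$, applying $\partial^i$ to $\tau(z)=z+f$ gives $\tau(\partial^i(z))=\partial^i(z)+\partial^i(f)$ for every $i\geq 0$. Thus if we set $z_0:=z$, $z_1:=\partial(z)$, \dots, $z_n:=\partial^n(z)$ and $f_i:=\partial^i(f)$, the tuple $(z_0,\dots,z_n)$ is a solution of the inhomogeneous system \eqref{eq:transcendenceINH} of size $n+1$ associated to the data $(f_0,\dots,f_n)$ --- with the minor caveat that some $f_i$ may vanish, which only means the corresponding equation is $\tau(z_i)=z_i$; since Proposition \ref{prop:transcendenceINH} is stated for $f_i\in K^\ast$, I would either remark that the proof there goes through verbatim when some $f_i$ are zero (the Picard--Vessiot ring is still $k[z_0,\dots,z_n]$ and the Galois group is still an algebraic subgroup of $\G_{a,k}^{n+1}$), or simply discard the indices $i$ with $f_i=0$ and work with the remaining ones, noting that $z_i\in K$ automatically when $f_i=\tau(0)-0$ is realized, hmm --- more carefully, if $f_i=0$ then $z_i\in F^\tau=k\subset K$, so such $z_i$ contribute nothing to algebraic dependence over $K$ and can be dropped on both sides.

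Next I would run the equivalences. The implication $1\Rightarrow 2$ is immediate: if $\sum_{i=0}^n\la_i\partial^i(f)=\tau(g)-g$, then $w:=\sum_{i=0}^n\la_i\partial^i(z)-g$ satisfies $\tau(w)=w$, so $w\in k\subset K$, hence $\sum_{i=0}^n\la_i\partial^i(z)=w+g\in K$, which is statement 2. The implication $2\Rightarrow 4$ is tautological, since a nontrivial $k$-linear --- hence $K$-linear --- relation among $z,\partial(z),\dots,\partial^n(z)$ modulo $K$ exhibits $z$ as differentially algebraic over $K$. The implication $3\Rightarrow 4$ is part of the definition ($D$-finite implies differentially algebraic). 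The work is in $4\Rightarrow 1$ and in upgrading $4$ to $3$.

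For $4\Rightarrow 1$: assume $z$ is differentially algebraic over $K$, so for some $n$ the elements $z,\partial(z),\dots,\partial^n(z)$ are algebraically dependent over $K$. By the previous paragraph's translation, $z_0,\dots,z_n$ (after discarding the finitely many with $f_i=0$, which lie in $K$ and cannot destroy algebraic \emph{independence}, so the dependence persists among the remaining ones together with constants) is an algebraically dependent solution of an inhomogeneous system of the shape \eqref{eq:transcendenceINH}. Applying Proposition \ref{prop:transcendenceINH}, assertion 4 $\Rightarrow$ assertion 1 there yields $\la_i\in k$, not all zero, and $g\in K$ with $\sum_i\la_i f_i=\tau(g)-g$, i.e. $\sum_i\la_i\partial^i(f)=\tau(g)-g$, which (setting to zero the $\la_i$ for discarded indices) is exactly statement 1. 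Finally, for $1\Rightarrow 3$ I would observe that statement 1 combined with $1\Rightarrow 2$ shows $h:=\sum_{i=0}^n\la_i\partial^i(z)\in K$; letting $m$ be the largest index with $\la_m\neq 0$, this is a linear differential equation $\la_m\partial^m(z)+\dots+\la_0 z = h$ of order $m$ with coefficients in $k\subseteq K$ and inhomogeneous term in $K$ --- hence, after one more differentiation to kill $h$ or simply by definition of $D$-finite allowing inhomogeneous linear equations (equivalently, appending the equation $\partial(h)\in K$), $z$ is $D$-finite over $K$. So the cycle $1\Rightarrow 2\Rightarrow 4\Rightarrow 1$ and $1\Rightarrow 3\Rightarrow 4$ closes all four equivalences.

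The main obstacle I anticipate is purely bookkeeping: making sure the passage from $\tau(y)=y+f$ with a single $f\in K^\ast$ to the system \eqref{eq:transcendenceINH} is legitimate even though some of the derivatives $\partial^i(f)$ may vanish (so Proposition \ref{prop:transcendenceINH} is not literally applicable to data containing zeros), and that "algebraically dependent over $K$" for $z,\partial z,\dots,\partial^n z$ really does translate into the hypothesis needed. Both are handled by the remark that $f_i=0$ forces $z_i\in k\subset K$, so those coordinates are harmless, but the statement should spell this out. There is also the small point that Proposition \ref{prop:transcendenceINH} gives $\la_i\in k$ only for the surviving indices; one pads with zeros. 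No serious analytic or algebraic difficulty arises beyond this.
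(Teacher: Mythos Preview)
Your approach is exactly the paper's: differentiate $\tau(z)=z+f$ to obtain $\tau(\partial^i z)=\partial^i z+\partial^i f$ for all $i$, and then invoke Proposition~\ref{prop:transcendenceINH}. The paper's proof is in fact just those two sentences, so your expanded cycle of implications is a faithful (and more detailed) unpacking of the same argument.

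One small correction to your bookkeeping paragraph: your claim that elements $z_i\in K$ ``cannot destroy algebraic independence, so the dependence persists among the remaining ones'' is false as stated---any family containing an element of $K$ is automatically algebraically dependent over $K$, so removing such elements can perfectly well leave an independent family. The right way to dispose of the case where some $\partial^i(f)=0$ is simpler than what you wrote: if $\partial^i(f)=0$ for some $i$, then statement~1 holds trivially with $\lambda_i=1$, all other $\lambda_j=0$, and $g=0$, so all four statements hold and there is nothing further to prove. With that adjustment your proof is complete.
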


\noindent
In particular, the corollary above says that:

\begin{cor}
In the notation of Corollary \ref{cor:DiffAlgGa},
if $z$ is not $D$-finite over $K$ then $z$ is differentially transcendental over $K$.
\end{cor}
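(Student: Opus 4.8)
The plan is to deduce this immediately from the equivalence of assertions 3 and 4 in Corollary~\ref{cor:DiffAlgGa}. By the definition of differential transcendence, the conclusion ``$z$ is differentially transcendental over $K$'' is by definition the negation of ``$z$ is differentially algebraic over $K$''. So the statement to be proved is the contrapositive of the implication ``$z$ differentially algebraic over $K$ $\Rightarrow$ $z$ is $D$-finite over $K$'', i.e. the implication $4\Rightarrow 3$ of Corollary~\ref{cor:DiffAlgGa}.

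Concretely, I would argue as follows. Suppose $z$ is not $D$-finite over $K$. If $z$ were differentially algebraic over $K$, then assertion 4 of Corollary~\ref{cor:DiffAlgGa} would hold, hence by that corollary assertion 3 would hold as well, i.e. $z$ would be $D$-finite over $K$, contradicting our assumption. Therefore $z$ is not differentially algebraic over $K$, which is exactly the assertion that $z$ is differentially transcendental over $K$.

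There is no genuine obstacle here: the entire mathematical content sits in Corollary~\ref{cor:DiffAlgGa} (ultimately in the structure of the proper algebraic subgroups of $\G_{a,k}^{n}$, which forces a nontrivial differential-algebraic relation to already come from a $K$-linear differential relation, hence from a \emph{linear} differential equation over $K$). The only thing to be careful about is to phrase the argument purely as the contrapositive of $4\Rightarrow 3$ and to invoke the definition of ``differentially transcendental'' verbatim, so that no extra hypotheses (such as those of Corollary~\ref{cor:DiffAlgGa}, which are already in force ``in the notation of Corollary~\ref{cor:DiffAlgGa}'') need to be re-checked.
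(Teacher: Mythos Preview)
Your proposal is correct and is exactly the approach implicit in the paper: the corollary is stated without proof there, as it is precisely the contrapositive of the implication $4\Rightarrow 3$ in Corollary~\ref{cor:DiffAlgGa}. There is nothing more to add.
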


\begin{proof}[Proof of Corollary \ref{cor:DiffAlgGa}.]
The assumption on the commutativity of $\partial$ and $\tau$ implies that $z$ satisfies all the following difference equations:
$$
\tau\l(\partial^i(z)\r)=\partial^i(z)+\partial^i(f),
\hbox{~for all $i=0,1,2,\dots$. }
$$
The statement follows from Proposition \ref{prop:transcendenceINH}.
\end{proof}

\begin{cor}\label{cor:DiffAlgGm}
Let $a\in K^\ast$ and $z\in F^\ast$ be  a solution of $\tau(y)=ay$. The following statement are equivalent:
\begin{enumerate}
  \item There exist $n\geq 0$, $\la_0,\dots,\la_n\in k$, not all zero, and $g\in K$ such that $\la_0\frac{\partial(a)}{a}+\la_1\partial\l(\frac{\partial(a)}{a}\r)
    +\dots+\la_n \partial^n\l(\frac{\partial(a)}{a}\r)=\tau(g)-g$.
  \item There exist $n\geq 0$, $\la_0,\dots,\la_n\in k$, not all zero, such that $\la_0\frac{\partial(z)}{z}+\la_1\partial\l(\frac{\partial(z)}{z}\r)
    +\dots+\la_n \partial^n\l(\frac{\partial(z)}{z}\r)\in K$.
  \item $\frac{\partial(z)}{z}$ is $D$-finite over $K$.
  \item $z$ is differentially algebraic over $K$.
\end{enumerate}
\end{cor}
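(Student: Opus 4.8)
The plan is to reduce the statement to Corollary~\ref{cor:DiffAlgGa} by passing from $z$ to its logarithmic derivative. Set $u:=\partial(z)/z\in F$, which makes sense because $z\in F^\ast$. Since $\partial$ commutes with $\tau$ and $\tau(z)=az$, one computes
$$
\tau(u)=\frac{\tau(\partial(z))}{\tau(z)}=\frac{\partial(\tau(z))}{\tau(z)}=\frac{\partial(az)}{az}=\frac{\partial(a)}{a}+\frac{\partial(z)}{z}=u+\frac{\partial(a)}{a},
$$
so $u$ is a solution in $F$ of the inhomogeneous equation $\tau(y)=y+f$ with $f:=\partial(a)/a\in K$.

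Assume first that $f\neq 0$, i.e.\ that $a$ is not a $\partial$-constant. Then I would apply Corollary~\ref{cor:DiffAlgGa} verbatim to $u$ and to $f=\partial(a)/a$. Reading off $f=\partial(a)/a$ and $u=\partial(z)/z$, its four equivalent assertions become, respectively, assertion~1 of the present corollary, assertion~2, assertion~3, and the statement that $\partial(z)/z$ is differentially algebraic over $K$. Hence the only remaining task is to show that $z$ is differentially algebraic over $K$ if and only if $u=\partial(z)/z$ is.

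For this, write $K\langle w\rangle:=K(w,\partial(w),\partial^2(w),\dots)$ for the differential subfield of $F$ generated by $w$. If $z$ is differentially algebraic over $K$, then $u\in K(z,\partial(z))\subseteq K\langle z\rangle$, so $K\langle u\rangle\subseteq K\langle z\rangle$ has finite transcendence degree over $K$ and $u$ is differentially algebraic over $K$. Conversely, from $\partial(z)=uz$ an immediate induction gives $\partial^n(z)=P_n\,z$ with $P_n\in K\langle u\rangle$ for all $n\geq 0$ (take $P_0=1$ and $P_{n+1}=\partial(P_n)+uP_n$); therefore $K\langle u\rangle(z)$ is stable under $\partial$, contains $K\langle z\rangle$, and has transcendence degree at most one over $K\langle u\rangle$. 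If $u$ is differentially algebraic over $K$ then $K\langle u\rangle$ has finite transcendence degree over $K$, hence so does $K\langle u\rangle(z)\supseteq K\langle z\rangle$, and $z$ is differentially algebraic over $K$. Finally, in the degenerate case $f=\partial(a)/a=0$ one checks all four assertions directly: $\tau(u)=u$ forces $u\in k\subseteq K$, so assertions~1 and~2 hold with $n=0$, $\la_0=1$, $g=0$, assertion~3 holds because $u\in K$ is trivially $D$-finite over $K$, and $z$ satisfies the linear differential equation $\partial(y)=uy$ over $K$, hence is $D$-finite and a fortiori differentially algebraic over $K$.

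I do not expect a genuine obstacle here: the whole argument is a short reduction. The only points that deserve care are the transcendence-degree bookkeeping in the equivalence ``$z$ differentially algebraic over $K$ $\iff$ $\partial(z)/z$ differentially algebraic over $K$'', and the need to dispose of the case $\partial(a)=0$ separately, since Corollary~\ref{cor:DiffAlgGa} is stated only for $f\in K^\ast$.
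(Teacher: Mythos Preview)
Your proof is correct and follows essentially the same route as the paper: take the logarithmic derivative to obtain $\tau(\partial(z)/z)=\partial(z)/z+\partial(a)/a$, reduce to Corollary~\ref{cor:DiffAlgGa}, and observe that $z$ is differentially algebraic over $K$ if and only if $\partial(z)/z$ is. You are in fact more careful than the paper in two respects: you give a cleaner transcendence-degree argument for the latter equivalence, and you explicitly dispose of the degenerate case $\partial(a)=0$, which the paper's proof silently ignores despite Corollary~\ref{cor:DiffAlgGa} being stated only for $f\in K^\ast$.
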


\begin{proof}
Notice that $z$ is differentially algebraic over $K$ if and only if $\partial(z)/z$ is differentially algebraic
over $K$.
In fact, $\partial^n\l(\frac{\partial(z)}{z}\r)\in\frac{1}{z^n}K[z,\partial(z),\dots,\partial^n(x)]$, therefore an elementary algebraic manipulation allows to
transform an algebraic differential equation satisfied by $z$ into an algebraic differential equation satisfied by $\partial(z)/z$ and \emph{vice versa}.
Taking the logarithmic derivative of $\tau(z)=az$, we obtain:
$$
\tau\l(\frac{\partial(z)}{z}\r)=\frac{\partial(z)}{z}+\frac{\partial(a)}{a}.
$$
The statement follows from the Corollary \ref{cor:DiffAlgGa}.
\end{proof}

\begin{rmk}
The generalization of the last two corollaries to systems of order 1 equations and to an arbitrary set of commuting derivations is straightforward.
For the generalization to the case of equation of the form $\tau(y)=ay+f$, we refer to \cite[Propositions 3.8, 3.9, and 3.10]{CharlotteSinger:deltadeltapi}.
\end{rmk}

\section{Applications to special cases}
\label{sec:applications}

In this section we suppose that $F$ has characteristic zero.
The results in \S\ref{subsec:holder} and \S\ref{subsec:ishizaki} have been originally proven in \cite[\S3]{CharlotteSinger:deltadeltapi}.

\subsection{Finite difference equations and Hölder theorem}
\label{subsec:holder}

We are in the situation of Example \ref{exa:Gamma}, i.e., let $F$ be the field of meromorphic functions over $\C$
equipped with the operator $\tau : f(x)\mapsto f(x+1)$. Then $k$ is the field of meromorphic $1$-periodic functions and we set $K=k(x)$.
We set $\partial=\frac d{dx}$, which commutes with $\tau$.

\begin{cor}\label{cor:FiniteDifferenceGa}
In the notation above, let $f\in K^\ast$ and $z\in F$ be such that $\tau(z)=z+f$.
The following assertions are equivalent:
\begin{enumerate}
  \item $z$ is differentially algebraic over $K$.
  \item $z$ is $D$-finite over $K$.
  \item There exist a positive integer $n$, $\la_0,\dots,\la_n\in k$ and $g\in K$ such that
  $$
  \la_0 f+\la_1\partial\l(f\r)+\dots+\la_n\partial^n\l(f\r)=g(x+1)-g(x).
  $$
\end{enumerate}
If $f\in\C(x)$ (resp. $f\in\overline{\Q}(x)$), then they are also equivalent to:
\begin{enumerate}
  \item[4.] There exist a positive integer $n$, $\la_0,\dots,\la_n\in\C$ (resp. $\in\overline{\Q}$) and $g\in \C(x)$
  (resp. $\in\overline{\Q}(x)$) such that
  $$
  \la_0 f+\la_1\partial\l(f\r)+\dots+\la_n\partial^n\l(f\r)=g(x+1)-g(x).
  $$
\end{enumerate}
\end{cor}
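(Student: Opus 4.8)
The equivalence of items 1, 2 and 3 is an immediate specialization of Corollary \ref{cor:DiffAlgGa} to the concrete setting $F = $ meromorphic functions on $\C$, $K = k(x)$, $\partial = \frac{d}{dx}$, $\tau\colon f(x)\mapsto f(x+1)$: here the condition $\tau(g)-g$ in Corollary \ref{cor:DiffAlgGa} reads $g(x+1)-g(x)$, so there is essentially nothing to do. So the whole content of the statement is the extra equivalence with item 4 under the rationality hypothesis $f\in\C(x)$ (resp.\ $f\in\overline{\Q}(x)$). The implication $4\Rightarrow 3$ is trivial since $\C(x)\subset K$ and $\C\subset k$. The plan is therefore to prove $3\Rightarrow 4$: assuming there exist $n$, $\la_0,\dots,\la_n\in k$ not all zero and $g\in K=k(x)$ with $\sum_{j=0}^n\la_j\partial^j(f) = g(x+1)-g(x)$, and assuming $f\in\C(x)$, one must produce a \emph{new} such relation with the $\la_j$ in $\C$ and $g\in\C(x)$.

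First I would reduce to the $\overline{\Q}$ case being analogous and concentrate on $f\in\C(x)$. The key structural input is that $h := \sum_{j=0}^n\la_j\partial^j(f)$, once a relation exists, can be taken to lie in $\C(x)$: indeed $\partial^j(f)\in\C(x)$ for all $j$, so $h$ is a $k$-linear combination of finitely many fixed elements $\partial^0 f,\dots,\partial^n f$ of the finite-dimensional $\C$-vector space $W\subset\C(x)$ that they span. The relation says $h$ is a $\tau$-coboundary $g(x+1)-g(x)$ with $g\in k(x)$. The crux is a \emph{descent} argument: the set of $h\in\C(x)$ that are of the form $g(x+1)-g(x)$ for some $g\in k(x)$ should coincide with the set of $h\in\C(x)$ that are of that form for some $g\in\C(x)$. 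This is where the hypothesis $k = F^\tau = K^\tau$ and the structure of $k(x)/\C(x)$ enter. Concretely: write $g = \sum_i c_i(x) b_i$ where $\{b_i\}$ is a $\C$-basis of the $\C$-subspace of $k$ generated by the coefficients of a partial-fraction expansion of $g$, with $c_i(x)\in\C(x)$; then $g(x+1)-g(x) = \sum_i (c_i(x+1)-c_i(x)) b_i$ since $\tau$ fixes each $b_i\in k$, and for this to lie in $\C(x) = \C(x)\cdot b_{i_0}$ (taking $b_{i_0}=1$) one needs $c_i(x+1)=c_i(x)$, i.e.\ $c_i\in\C(x)^\tau=\C$, for $i\ne i_0$; hence $g(x+1)-g(x) = c_{i_0}(x+1)-c_{i_0}(x)$ with $c_{i_0}\in\C(x)$. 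Thus $h$ is already a coboundary over $\C(x)$.

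It remains to descend the $\la_j$. Having fixed $h\in\C(x)$ which is a $\tau$-coboundary over $\C(x)$, consider the $\C$-linear map $L\colon k^{n+1}\to k(x)$, $(\mu_0,\dots,\mu_n)\mapsto \sum_j\mu_j\partial^j(f) - h\cdot\nu$ — no; more cleanly: the equation $\sum_{j=0}^n \mu_j\,\partial^j(f) = h$ is an \emph{inhomogeneous $\C$-linear system} in the unknowns $\mu_0,\dots,\mu_n$ whose coefficient data ($\partial^j f$ and $h$) all lie in $\C(x)$. A linear system with coefficients in the field $\C(x)$ that has a solution in the extension field $k$ already has a solution in $\C(x)$ itself (linear algebra is insensitive to field extension). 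Since the original $k$-solution $(\la_0,\dots,\la_n)$ is nonzero, one also needs the $\C(x)$-solution to be nonzero: if the solution set is a nonempty affine subspace of $k^{n+1}$ cut out over $\C(x)$ and it contains a nonzero point over $k$, then, because a proper affine subspace defined over $\C(x)$ cannot contain $0$ unless it is the whole solution space through $0$... here one argues: either the associated homogeneous system $\sum\mu_j\partial^j f=0$ has a nonzero $\C(x)$-solution (done, ignoring $h$ — but then $f$ satisfies a linear ODE over $\C(x)$ and $3$ holds trivially with $h=0\in\C(x)$ a coboundary via $g=0$), or the system $\sum\mu_j\partial^j f=h$ has a \emph{unique} solution, which must then coincide with the $k$-solution $(\la_j)$, forcing $\la_j\in\C(x)$; and being $\partial$-constants (or by comparing, since the $\la_j$ lie in $k=F^\tau$ and now also in $\C(x)$, whence in $\C(x)^\tau=\C$) we get $\la_j\in\C$. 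Either way item 4 holds. The main obstacle I expect is precisely the bookkeeping in the descent of $g$: making rigorous that the $\tau$-coboundary property over $k(x)$ forces it over $\C(x)$, i.e.\ that $(\C(x)\text{-coboundaries in }\C(x)) = (k(x)\text{-coboundaries in }\C(x))$ — this uses crucially that $k(x)^\tau = k$ and $\C(x)^\tau=\C$, and one must handle the partial-fraction / pole-structure argument carefully (the function $g$ need not be unique — it is determined up to $k$, so one fixes a convenient representative). For the $\overline{\Q}$ variant the same argument goes through since $\overline{\Q}(x)^\tau=\overline{\Q}$ and $\overline{\Q}\subset k$.
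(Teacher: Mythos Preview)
Your reduction of $1\Leftrightarrow 2\Leftrightarrow 3$ to Corollary~\ref{cor:DiffAlgGa} and the triviality of $4\Rightarrow 3$ match the paper exactly. For $3\Rightarrow 4$, however, the paper takes a quite different and shorter route: it fixes the degrees $N,M$ of the numerator and denominator of $g$, introduces formal unknowns $\Lambda_0,\dots,\Lambda_n,A_0,\dots,A_N,B_0,\dots,B_M$ for the $\lambda_j$ and the coefficients of $g$, rewrites the identity $\sum_j\Lambda_j\partial^j f=g(x+1)-g(x)$ as a system of polynomial equations in these unknowns with coefficients in $C$ ($=\C$ or $\overline{\Q}$), observes that this system has a solution in $k$, and concludes by the algebraic closedness of $C$ that it already has a solution in $C$. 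No separate descent of $g$ and of the $\lambda_j$ is needed, and no pole analysis enters.

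Your two–step plan (descend $g$, then descend the $\lambda_j$) can be made to work, but as written it has a genuine gap. You assert that $h=\sum_j\lambda_j\partial^j f$ ``can be taken to lie in $\C(x)$'', and the rest of your argument uses this. But with $\lambda_j\in k$ and $\partial^j f\in\C(x)$ you only get $h\in k\otimes_\C\C(x)\subset k(x)$, not $h\in\C(x)$; getting the $\lambda_j$ into $\C$ is precisely the endpoint, so this step is circular. The fix is to expand the $\lambda_j$ over a $\C$-basis $(b_i)$ of their $\C$-span in $k$, obtaining $h=\sum_i b_i h_i$ with each $h_i=\sum_j\lambda_{ji}\partial^j f\in\C(x)$, and then to show that $g$ itself lies in $k\otimes_\C\C(x)$ so that the $b_i$-components separate and give $h_i=g_i(x+1)-g_i(x)$ with $g_i\in\C(x)$; any index $i$ with $(\lambda_{ji})_j\neq 0$ then yields assertion~4. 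The inclusion $g\in k\otimes_\C\C(x)$ is exactly the ``pole-structure'' point you flag but do not carry out: one checks that every pole $\alpha$ of $g$ not lying in $\C$ would force $g$ to have poles at all of $\alpha,\alpha+1,\alpha+2,\dots$ (since the poles of $h$, hence of $g(x+1)-g(x)$, are in $\C$), contradicting rationality; thus the denominator of $g$ lies in $\C[x]$ and $g\in k[x]\cdot\C(x)\subset k\otimes_\C\C(x)$. With these two corrections your linear-disjointness argument goes through, but the paper's single algebraic-closedness step is noticeably more economical.
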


\begin{proof}
Notice that $1\Leftrightarrow 2\Leftrightarrow 3$  follow from Corollary \ref{cor:DiffAlgGa}.
Moreover $4\Rightarrow 3$ is trivial. Let us prove that $3\Rightarrow4$, by a classical descent argument.
Let $C=\C$ or $\overline{\Q}$, so that $f\in C(x)$. Let $N$ be the degree of the denominator of $g$ and $M$ the degree of its denominator.
We consider a ring of polynomials of the form $C[\La_0,\dots,\La_n,A_{0},\dots,A_N,B_{0}\dots,B_M]$,
so that we can write the equality
\beq\label{eq: Holder}
\La_0f+\La_1\partial\l(f\r)+\dots+\La_n\partial^n\l(f\r)=\frac{A_0+A_1(x+1)+\dots+A_N(x+1)^N}{B_0+B_1(x+1)+\dots+B_M(x+1)^M}-\frac{A_0+A_1x+\dots+A_Nx^N}{B_0+B_1x+\dots+B_Mx^M}.
\eeq
Equalizing the coefficients of each integer powers of $x$ in \eqref{eq: Holder}, we obtain a system of polynomial equations with coefficients in $C$,
that has a solution in $k$, by assumption. Since $C$ is an algebraically closed field
contained in $k$,
it must also have a solution in $C$.   This proves the corollary. 
\end{proof}

Although the last assertion of Corollary \ref{cor:FiniteDifferenceGa} is stated over $\C$ (or over $\overline{\Q}$), \
we cannot conclude the differential algebraicity of $z$ over $\C(x)$.
See Example \ref{exa:nondescentoverC} that it is based on the fact that there are meromorphic $1$-periodic
functions that are differentially transcendental  over $\C(x)$.
For now, notice that the statement above only implies the following:

\begin{cor}\label{cor: GeneralHolder}
We consider the same notation as in the previous corollary, with $C=\overline{\Q}$ or $\C$ and $f\in C(x)$.
Suppose that for any $n\geq 0$, any $\la_0,\dots,\la_n\in C$ and any $g\in C(x)$, we have: $\la_0 f+\la_1\partial\l(f\r)+\dots+\la_n\partial^n\l(f\r)\neq g(x+1)-g(x)$.
Then $z$ is differentially transcendent over $K$ and hence over $C(x)$.
\end{cor}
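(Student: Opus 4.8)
The plan is to read this off from Corollary \ref{cor:FiniteDifferenceGa} together with one elementary remark on descent of differential transcendence. The hypothesis we are handed --- that no identity $\la_0 f+\la_1\partial(f)+\dots+\la_n\partial^n(f)=g(x+1)-g(x)$ holds with $n\geq 0$, $\la_0,\dots,\la_n\in C$ not all zero and $g\in C(x)$ --- is exactly the negation of assertion~4 of Corollary \ref{cor:FiniteDifferenceGa} for the given $C$. Since $f\in C(x)$, that corollary tells us assertion~4 is equivalent to assertion~1; hence the negation of 4 gives the negation of 1, which is by definition the statement that $z$ is differentially transcendental over $K$.

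Next I would transfer this from $K=k(x)$ down to $C(x)$. This is formal: as $C\subseteq k$ we have $C(x)\subseteq K$, so an algebraic differential equation over $C(x)$ satisfied by $z$ is a fortiori one over $K$. Thus if $z$ were differentially algebraic over $C(x)$, it would be differentially algebraic over $K$, contradicting what was just shown; therefore $z$ is differentially transcendental over $C(x)$ as well, which is the full assertion of the corollary.

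I do not expect any real obstacle here. All the content is already packaged inside Corollary \ref{cor:FiniteDifferenceGa}, and in particular inside the descent step $3\Rightarrow 4$ of its proof (a polynomial system with coefficients in the algebraically closed field $C$ that already has a solution in the overfield $k$ has one in $C$). The only point to state carefully is the direction of the field inclusion $C(x)\subseteq K$: differential transcendence is preserved when one shrinks the field of coefficients, which is exactly the direction needed to go from transcendence over $K$ to transcendence over $C(x)$.
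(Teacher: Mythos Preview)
Your argument is correct and is exactly the approach the paper intends: the corollary is presented there as an immediate consequence of Corollary~\ref{cor:FiniteDifferenceGa} (negating the equivalent assertion~4 to obtain the negation of assertion~1), together with the trivial observation that $C(x)\subset K$ so differential transcendence over $K$ forces differential transcendence over $C(x)$.
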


The Euler Gamma function, that we have already mentioned in some examples, is a meromorphic function over $\CC$ satisfying
the functional equation $\Ga(x+1)=x\Ga(x)$.
Hölder theorem \cite{Holder1887} says that the Gamma function is differentially transcendental over $\C(x)$ and
we are now able to prove it, using a Galoisian argument that has first appeared in \cite{hardouin_compositio} and \cite{CharlotteSinger:deltadeltapi}.
Notice that in \cite{BankKaufamannGamma2} there is a similar proof of the differential transcendency of the Gamma function,
which relies on a statement similar to Corollary \ref{cor: GeneralHolder} in the specific case of the Gamma function,
proven by an elementary argument of
complex analysis.

\begin{prop}\label{prop:Holder}
The Gamma function $\Gamma$ is differentially transcendental over $\C(x)$.
\end{prop}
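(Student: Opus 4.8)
The plan is to apply Corollary~\ref{cor: GeneralHolder} with $C = \C$, $f = x$, and the difference equation $\tau(y) = y + x$, whose solution $z \in F$ exists (for instance $z(x) = \log\Gamma(x)$, since $\tau(\log\Gamma) = \log\Gamma(x+1) = \log(x\Gamma(x)) = \log\Gamma(x) + \log x$ — but wait, this $f$ should be $\log x \notin \C(x)$). So the cleaner route is to take $f = \partial(x)/x = 1/x \in \C(x)$ and work with Corollary~\ref{cor:DiffAlgGm} (or rather its specialization): indeed $\Gamma$ satisfies $\tau(y) = xy$ with $a = x$, so $\partial(a)/a = 1/x$, and by Corollary~\ref{cor:DiffAlgGm} the function $\Gamma$ is differentially algebraic over $K = k(x)$ if and only if there exist $n \geq 0$, $\la_0,\dots,\la_n \in k$ not all zero, and $g \in K$ with $\la_0 \tfrac1x + \la_1\partial(\tfrac1x) + \dots + \la_n\partial^n(\tfrac1x) = \tau(g) - g = g(x+1) - g(x)$.

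First I would reduce to the rational descent: since $1/x \in \C(x)$, the version of Corollary~\ref{cor:FiniteDifferenceGa} (applied with $f = 1/x$, noting $\partial^i(1/x) = (-1)^i i!\, x^{-i-1}$) tells us that it suffices to rule out the existence of $\la_0,\dots,\la_n \in \C$, not all zero, and $g \in \C(x)$ with
$$
\sum_{i=0}^n \la_i (-1)^i i!\, x^{-i-1} = g(x+1) - g(x).
$$
So the hard part is the concrete function-theoretic statement: \emph{the rational function $h(x) := \sum_{i=0}^n c_i x^{-i-1}$ (with the $c_i \in \C$ not all zero) is never of the form $g(x+1) - g(x)$ for $g \in \C(x)$.} I would prove this by a pole/partial-fraction argument. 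Decompose $g$ into partial fractions. If $g$ has a pole at some $\alpha \in \C$ of order $m \geq 1$, then $g(x+1) - g(x)$ has poles at both $\alpha$ and $\alpha - 1$ (of order $m$) unless cancellation occurs; by shifting one sees the poles of $g(x+1) - g(x)$ come in a "telescoping chain" along $\alpha + \Z$, and for the difference to have poles only at $x = 0$ (i.e.\ in the single coset $\Z$), $g$ itself would need poles accumulating along all of $\Z$, which is impossible for a rational function — more precisely, if $\beta$ is a pole of $g$ of maximal order $m$ among all poles in its $\Z$-coset, with $\mathrm{Re}(\beta)$ maximal, then $g(x+1) - g(x)$ has a pole of order $m$ at $\beta$ that is not cancelled, so $\beta \in \Z$, and running the chain down shows $g$ has a pole at every integer $\le \beta$, contradicting rationality. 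Hence $g$ has poles only possibly at integers, and then $g(x+1) - g(x)$ can have a pole at $x = 0$, but the order there is governed by the principal part of $g$ at $x=0$ versus $x=1$; matching the $\frac{1}{x^{i+1}}$ terms, $h(x) = g(x+1) - g(x)$ forces (comparing Laurent expansions at $0$) that the principal part of $g$ at $0$ determines $h$, but then summing the telescoped principal parts over the integers in any bounded region and using that $g$ is rational (finitely many poles) forces all $c_i = 0$.

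I would spell this out as: write the partial fraction expansion of $g$ at its finitely many integer poles; then $g(x+1) - g(x)$ equals $h(x)$ means in particular that, evaluating the sum $\sum_{j=-N}^{N}\big(g(j+1)-g(j)\big)$ type telescoping at the level of principal parts, the only surviving principal part is at the extreme integers, which must match $h$'s principal part at $0$ — forcing $h \equiv 0$ after accounting that $h$ has a pole only at $0$. This contradicts the $c_i$ not all being zero. The main obstacle is organizing this pole-chasing cleanly (the telescoping-along-$\Z$ argument), making sure the maximal-real-part extremal pole of $g$ genuinely survives in the difference $g(x+1)-g(x)$ without cancellation — a standard but slightly fiddly point. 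Once that purely rational statement is established, the proposition follows immediately: $\Gamma$ is not differentially algebraic over $\C(x)$, hence differentially transcendental over $\C(x)$, which is Hölder's theorem.
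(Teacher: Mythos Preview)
Your approach is exactly the paper's: pass to the logarithmic derivative $\psi=\partial\Gamma/\Gamma$, which satisfies $\tau(\psi)=\psi+\tfrac1x$, invoke the Galoisian criterion to reduce to ruling out
\[
\sum_{i=0}^n \la_i\,\partial^i\!\left(\tfrac1x\right)=g(x+1)-g(x),\qquad \la_i\in\C,\ g\in\C(x),
\]
and finish with a pole argument. The paper does the last step in one line: the left side has all its poles at $0$, whereas $g(x+1)-g(x)$, if it has any pole at all, must have at least one pole away from $0$.

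Your chain argument is a correct justification of that one line (rightmost pole $\beta$ of $g$ of top order in its $\Z$-coset survives in the difference, forcing $\beta=0$; then regularity of the difference at $-1,-2,\dots$ forces $g$ to have poles at all negative integers, contradicting rationality). But once you have reached that contradiction you are \emph{done}: the remaining paragraph about ``matching Laurent expansions at $0$'' and ``summing telescoped principal parts over bounded regions'' is superfluous and somewhat muddled --- in particular, the sentence ``Hence $g$ has poles only possibly at integers'' comes \emph{after} you have already derived a contradiction, so nothing further is needed. Also drop the false start with $z=\log\Gamma$ and $f=\log x$; it only distracts from the clean route via $\psi$ that you correctly take immediately afterwards.
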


\begin{proof}
As in the proof of Proposition \ref{cor:DiffAlgGm}, the Gamma function $\Gamma$ is differentially transcendental over $\C(x)$
if and only if
the function $\psi(x):=\partial (\Gamma)(x)/\Gamma(x)$, that verifies the functional equation
$$
\tau(\psi(x))=\psi(x)+\frac 1x,
$$
is differentially transcendental over $\C(x)$.
Suppose that there exist a positive integer $n$, $\la_0,\dots,\la_n\in\C$ and $g\in \C(x)$ such that
$$
\la_0\frac 1x+\la_1\partial\l(\frac 1x\r)+\dots+\la_n\partial^n\l(\frac 1x\r)=g(x+1)-g(x).
$$
Since the left-hand side has all its poles at $0$, while the right-hand side must have at least a non-zero pole,
we find a contradiction, by Corollary \ref{cor: GeneralHolder}.
\end{proof}

The following is a counterexample, based on Hölder theorem, for the fact that we cannot conclude the differential algebraicity over $\C(x)$ in
Corollary \ref{cor:FiniteDifferenceGa}.

\begin{exa}\label{exa:nondescentoverC}
The meromorphic function $\Gamma(\exp(2i\pi x))$ is $1$-periodic, hence belongs to $k\subset K$, but is not differentially algebraic over $\C(x)$, since it is
the composition of a differentially algebraic function and a differentially transcendental function.
In other words, $K$ itself is differentially transcendental over $\C(x)$.
\end{exa}

\begin{cor}\label{cor:qDiffHomogenous}\emph{\cite[Corollary 3.4]{CharlotteSinger:deltadeltapi}}
Let $a(x)\in\C(x)^\ast$ and let $z$ be a meromorphic function over $\C$ solution of $z(x+1)=a(x)z(x)$.
Then $z(x)$ is differentially algebraic over $k(x)$ if and only if
$a(x)=c\frac{g(x+1)}{g(x)}$, for some $g(x)\in\C(x)$ and $c\in\C$.
\end{cor}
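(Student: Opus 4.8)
The plan is to pass to logarithmic derivatives and reduce to the inhomogeneous situation already treated in Corollary~\ref{cor:FiniteDifferenceGa}. Put $\psi:=\partial(z)/z\in F$; the logarithmic derivative of $z(x+1)=a(x)z(x)$ gives $\tau(\psi)=\psi+f$ with $f:=\partial(a)/a$, and, as recalled at the start of the proof of Corollary~\ref{cor:DiffAlgGm}, $z$ is differentially algebraic over $K$ if and only if $\psi$ is. The implication ``if'' is then immediate: if $a=c\,g(x+1)/g(x)$ with $g\in\C(x)$ and $c\in\C$, then $\partial(a)/a=\tau(G_0)-G_0$ with $G_0:=\partial(g)/g\in\C(x)\subseteq K$, so assertion~1 of Corollary~\ref{cor:DiffAlgGm} holds (with $n=0$, $\la_0=1$) and $z$ is differentially algebraic over $K$ --- in fact $D$-finite.

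For the converse, assume $z$, equivalently $\psi$, is differentially algebraic over $K$. If $a$ is a non-zero constant the conclusion is trivial (take $g=1$), so assume $a$ non-constant, whence $f=\partial(a)/a\in\C(x)^\ast$. As $f\in\C(x)$, the last assertion of Corollary~\ref{cor:FiniteDifferenceGa} yields an integer $n\geq 0$, constants $\la_0,\dots,\la_n\in\C$ not all zero, and $g\in\C(x)$ with
$$
\sum_{j=0}^{n}\la_j\,\partial^j(f)=g(x+1)-g(x).
$$
Write $a(x)=c_0\prod_i(x-\alpha_i)^{m_i}$ with $c_0\in\C^\ast$, the $\alpha_i\in\C$ pairwise distinct and $m_i\in\Z\smallsetminus\{0\}$. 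Then $f=\sum_i m_i/(x-\alpha_i)$, and since $\partial^j(1/(x-\alpha))=(-1)^j\,j!\,(x-\alpha)^{-(j+1)}$ a direct computation gives $\sum_{j}\la_j\,\partial^j(f)=\sum_i m_i\,P(1/(x-\alpha_i))$, where $P(u):=\sum_{j=0}^{n}(-1)^j\,j!\,\la_j\,u^{j+1}$. Because the characteristic is zero and the $\la_j$ are not all zero, $P$ is a non-zero polynomial with $P(0)=0$; let $u^{J+1}$ be its leading monomial, with coefficient $p\neq0$.

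The crux is a telescoping comparison of partial-fraction coefficients. Fix $\alpha_0\in\C$ and let $\alpha$ run through the coset $\alpha_0+\Z$. In the identity $\sum_j\la_j\partial^j(f)=g(x+1)-g(x)$, the coefficient of $(x-\alpha)^{-(J+1)}$ on the left equals $p\,m_i$ when $\alpha=\alpha_i$ for some $i$ and $0$ otherwise, whereas on the right it equals $c^{(\alpha+1)}-c^{(\alpha)}$, where $c^{(\beta)}$ is the coefficient of $(x-\beta)^{-(J+1)}$ in the partial-fraction decomposition of $g$. Summing over all $\alpha\in\alpha_0+\Z$ (finitely many terms are non-zero), the right-hand side telescopes to $0$, and since $p\neq0$ this forces $\sum_{i\,:\,\alpha_i-\alpha_0\in\Z}m_i=0$: the multiplicities of $a$ sum to zero along every coset of $\Z$ in $\C$.

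It remains to repackage this divisor condition. Within each coset of $\Z$ a divisor of degree zero is a $\Z$-linear combination of the elementary differences $[\beta]-[\beta-1]$, so the condition just obtained gives $\mathrm{div}(a)=\sum_i m_i[\alpha_i]=\sum_\beta N_\beta\,([\beta-1]-[\beta])$ for suitable $\beta\in\C$ and $N_\beta\in\Z$. Put $g_0(x):=\prod_\beta(x-\beta)^{N_\beta}\in\C(x)^\ast$; then $g_0(x+1)/g_0(x)$ has the same divisor on $\C$ as $a$, so $a\,g_0(x)/g_0(x+1)$ has empty divisor on $\C$ and is thus a non-zero constant $c$, giving $a=c\,g_0(x+1)/g_0(x)$. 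The step I expect to be the main obstacle is this telescoping comparison, and in particular the fact that it is genuinely important that $\la_j\in\C$ and $g\in\C(x)$ --- which is why I route the argument through Corollary~\ref{cor:FiniteDifferenceGa} instead of directly through Corollary~\ref{cor:DiffAlgGm}, whose ``$g$'' lies only in $k(x)$ and could a priori have poles outside $\C$.
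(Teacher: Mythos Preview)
Your proof is correct and follows the same overall strategy as the paper: pass to the logarithmic derivative, invoke Corollary~\ref{cor:FiniteDifferenceGa} to obtain a relation $\sum_j\la_j\,\partial^j(\partial a/a)=g(x+1)-g(x)$ with $\la_j\in\C$ and $g\in\C(x)$, and then analyze poles along each $\Z$-coset in $\C$.

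The only difference is in the bookkeeping of that last step. The paper first normalizes by replacing $a$ with $a\,g(x)/g(x+1)$ for a suitable $g\in\C(x)$ so that $a$ has at most one zero or pole per $\Z$-orbit; then the left-hand side has at most one pole per orbit while the right-hand side has at least two in any orbit where it has one, forcing both sides to be pole-free and hence $a$ to be constant. You instead skip the normalization and run a direct telescoping argument on the top-order partial-fraction coefficients along each orbit, extracting the divisor condition $\sum_{\alpha_i\in\alpha_0+\Z}m_i=0$ and reconstructing $g_0$ at the end. Your route is a bit more explicit about why the orbit condition appears; the paper's is shorter. Either way the content is the same.
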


\begin{proof}
First of all, replacing $z(x)$ with $z(x)g(x)^{-1}$, for a convenient $g(x)\in\C(x)$, and $a(x)$ with $a(x)\frac{g(x)}{g(x+1)}$, we
 can suppose that two distinguished poles of $a(x)$ do not differ by an integer.
\par
It follows from Corollary \ref{cor:FiniteDifferenceGa}, that $z(x)$ is differentially algebraic over $k(x)$ if and only if
there exist a positive integer $n$, $\la_0,\dots,\la_n\in\C$ and $g\in \C(x)$ such that
$$
\la_0 \frac{\partial(a)}{a}+\la_1\partial\l(\frac{\partial(a)}{a}\r)+\dots+\la_n\partial^n\l(\frac{\partial(a)}{a}\r)=g(x+1)-g(x).
$$
In the differential relation above, the right hand side must have at least two pole in any $\tau$-orbit where it has a pole.
while the left hand side has at worst one pole per $\tau$-orbit.
We conclude that
$a(x)$ is constant.
\par
On the other hand, if $a(x)=c\frac{g(x+1)}{g(x)}$ and we choose a logarithm $\log c$ of $c$,
a general solution of $y(x+1)=a(x)y(x)$ has the form $z(x)=p(x)\exp(x\log c)$, with $p(x)\in k$. The latter is differentially algebraic over $k(x)$.
\end{proof}

\subsection{Linear inhomogeneous $q$-difference equations of the first order}
\label{subsec:q-difference}

We consider the setting of $q$-difference equations, i.e., $F$ is the field of meromorphic functions over $\C^\ast$,
$q$ is a fixed complex number such that $|q|>1$, $\tau: f(x)\mapsto f(qx)$, $K=k(x)$, with $k=F^\tau$.
We consider the derivation $\partial=x\frac d {dx}$, that commutes with $\tau$.
\par
With respect to differential algebraicity, the case of $q$-difference equations deeply differs from the case of finite difference equation
because of the following property (see Example \ref{exa:nondescentoverC}):

\begin{lemma}\label{lemma:EllipticFctsDiffAlgebraic}
The field of elliptic functions $k$ is differentially algebraic over $\C$.
\end{lemma}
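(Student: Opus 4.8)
The statement to prove is Lemma~\ref{lemma:EllipticFctsDiffAlgebraic}: the field $k$ of $q$-elliptic meromorphic functions on $\C^\ast$ is differentially algebraic over $\C$, where the relevant derivation is $\partial = x\frac{d}{dx}$. The plan is to reduce everything to the classical fact that the Weierstrass $\wp$-function satisfies an algebraic differential equation. First I would pass from the multiplicative picture on $\C^\ast$ to the additive picture on $\C$ via the covering map $x = e^{2\pi i u}$ (with a suitable normalization so that $q = e^{2\pi i \sigma}$ for some $\sigma$ in the upper half-plane, using $|q|>1$); under this change of variables a $q$-elliptic function $f(x)$ becomes a genuine elliptic function $\tilde f(u)$ for the lattice $\Lambda = \Z + \Z\sigma$, and the derivation $x\frac{d}{dx}$ corresponds, up to the constant factor $\frac{1}{2\pi i}$, to $\frac{d}{du}$. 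So it suffices to show that the field of elliptic functions for a fixed lattice is differentially algebraic over $\C$ with respect to $\frac{d}{du}$.

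Next I would invoke the structure theorem for elliptic functions: the field of $\Lambda$-elliptic functions is $\C(\wp, \wp')$, where $\wp$ is the Weierstrass function of $\Lambda$ and $\wp' = \frac{d}{du}\wp$. The key classical differential identity is $(\wp')^2 = 4\wp^3 - g_2\wp - g_3$, which exhibits $\wp$ as differentially algebraic over $\C$ (indeed $\wp'' = 6\wp^2 - \tfrac12 g_2$ is already an algebraic relation between $\wp$, $\wp''$). Therefore $\wp$ and $\wp'$ are each differentially algebraic over $\C$. Then I would use the general principle that the set of elements of a differential field extension that are differentially algebraic over a differential subfield forms a differentially algebraic subextension closed under the field operations and under the derivation: since $\wp$ and $\wp'$ are differentially algebraic over $\C$, every element of $\C(\wp,\wp')$ is differentially algebraic over $\C$. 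This gives the claim for a single lattice, and hence, after transporting back through $u \mapsto x$, for $k$.

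The main point requiring care — rather than a deep obstacle — is the bookkeeping in the change of variables: one must check that an arbitrary meromorphic $q$-invariant function on $\C^\ast$ genuinely pulls back to a meromorphic $\Lambda$-periodic function on $\C$ (periodicity in $1$ is automatic from single-valuedness of $x = e^{2\pi i u}$, and periodicity in $\sigma$ is exactly $q$-invariance), and that the correspondence of derivations $x\frac{d}{dx} \leftrightarrow \frac{1}{2\pi i}\frac{d}{du}$ only rescales differential equations by nonzero constants and hence preserves differential algebraicity over $\C$. A secondary point is justifying the ``differentially algebraic elements form a field closed under $\partial$'' principle; this is standard differential algebra (one combines algebraic differential equations for $f$ and $g$ via resultants/elimination to get one for $f+g$, $fg$, $1/f$, $\partial f$), and in fact for the concrete field $\C(\wp,\wp')$ one can avoid the abstract statement entirely by differentiating rational expressions in $\wp,\wp'$ directly and using $(\wp')^2 = 4\wp^3 - g_2\wp - g_3$ to stay within a fixed finite-dimensional algebraic situation. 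Either way, no genuinely hard step arises; the lemma is essentially a repackaging of the algebraic differential equation for $\wp$.
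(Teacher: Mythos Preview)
Your proposal is correct and follows essentially the same route as the paper: pass via the exponential from $\C^\ast/q^\Z$ to an additive torus $\C/(\Z+\sigma\Z)$, then invoke the algebraic differential equation for the Weierstrass $\wp$-function. The paper's argument is a one-sentence sketch (change of variable plus ``$\wp$ is differentially algebraic''), whereas you have spelled out the correspondence of derivations, the structure theorem $k\cong\C(\wp,\wp')$, and the closure of differentially algebraic elements under field operations; but the underlying idea is identical.
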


To prove Lemma \ref{lemma:EllipticFctsDiffAlgebraic}, it suffices to write the torus $\C^\ast/q^\Z$ in the form $\C/\Z+i\uptau\Z$, where $q=\exp(2i\pi\uptau)$,
using the exponential function, and
remember that the Weierstrass function $\wp$ is differentially algebraic over $\C(x)$,
which is itself differentially algebraic over $\C$.
\par
For further reference, we state the following corollary which is a consequence of the fact that, if we have a tower of differentially algebraic extensions
$\wtilde k/k^\p$ and $k^\p/k$, then $\wtilde k/k$ is also differentially algebraic:

\begin{cor}\label{cor:DiffALgebraicExtensions}
For a meromorphic function $f\in F$, it is equivalent to be differentially algebraic over
the following fields: $k(x)$, $k$, $\C(x)$, $\C$.
\end{cor}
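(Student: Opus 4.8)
The plan is to reduce everything to the transitivity property recalled just before the statement — namely, that a tower of differentially algebraic field extensions is again differentially algebraic — together with Lemma \ref{lemma:EllipticFctsDiffAlgebraic} and one elementary observation: if a $\partial$-field $M$ has transcendence degree one over a $\partial$-subfield $M_0$, then every element of $M$ is differentially algebraic over $M_0$, since $h$ and $\partial(h)$ both lie in $M$ and hence cannot be algebraically independent over $M_0$.

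First I would check that $\partial=x\frac{d}{dx}$ stabilizes each of $\C(x)$, $k$ and $k(x)$. For $\C(x)$ this is clear; for $k$ it follows by differentiating the defining relation $c(qx)=c(x)$ of an element $c\in k$, which gives $\partial(c)(qx)=\partial(c)(x)$; and stability of $k(x)$ is then immediate. Since $\C(x)/\C$ and $k(x)/k$ have transcendence degree one, the elementary observation above shows that both are differentially algebraic extensions. Lemma \ref{lemma:EllipticFctsDiffAlgebraic} says that $k/\C$ is a differentially algebraic extension, and combining this with $k(x)/k$ through the transitivity property yields that $k(x)/\C$ is differentially algebraic as well.

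Finally I would chain the implications. If $f\in F$ is differentially algebraic over $\C$, then the same algebraic differential equation witnesses differential algebraicity over any overfield, so $f$ is differentially algebraic over $\C(x)$, over $k$, and over $k(x)$. Conversely, if $f$ is differentially algebraic over $k(x)$, then, $k(x)/\C$ being a differentially algebraic extension, the transitivity property forces $f$ to be differentially algebraic over $\C$. In view of the inclusions $\C\subset\C(x)\subset k(x)$ and $\C\subset k\subset k(x)$, these two implications close the cycle and give the asserted four-way equivalence.

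There is no real obstacle here: once Lemma \ref{lemma:EllipticFctsDiffAlgebraic} is in hand the argument is purely formal, and the only step needing a moment's care is the verification that $\C(x)/\C$, $k(x)/k$ and hence $k(x)/\C$ are differentially algebraic extensions, which the transcendence-degree-one remark settles at once.
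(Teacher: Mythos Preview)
Your proof is correct and follows exactly the approach the paper indicates: it relies on Lemma \ref{lemma:EllipticFctsDiffAlgebraic} together with the transitivity of differentially algebraic extensions, and you have simply fleshed out the details (stability under $\partial$, the transcendence-degree-one remark for $\C(x)/\C$ and $k(x)/k$) that the paper leaves implicit.
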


Taking into account the previous lemma, the proof of the corollary below follows word by word the proof of Corollary \ref{cor:FiniteDifferenceGa}:

\begin{cor}\label{cor:qDifferenceGa}
In the notation above, let $f\in K^\ast$ and $z\in F$ be such that $\tau(z)=z+f$.
The following assertions are equivalent:
\begin{enumerate}
  \item $f$ is differentially algebraic over $K$.
  \item $f$ is differentially algebraic over $\C(x)$.
  \item $f$ is $D$-finite over $K$.
  \item There exist a positive integer $n$, $\la_0,\dots,\la_n\in k$ and $g\in K$ such that
  $$
  \la_0 f+\la_1\partial\l(f\r)+\dots+\la_n\partial^n\l(f\r)=g(qx)-g(x).
  $$
\end{enumerate}
Moreover, if $f\in\overline{\Q}(x)$ (resp. $\C(x)$), they are also equivalent to:
\begin{enumerate}
  \item There exist a positive integer $n$, $\la_0,\dots,\la_n\in\overline{\Q}$ (resp. $\C$) and $g\in\overline{\Q}(x)$ (resp. $\C(x)$) such that
  $$
  \la_0 f+\la_1\partial\l(f\r)+\dots+\la_n\partial^n\l(f\r)=g(qx)-g(x).
  $$
\end{enumerate}
\end{cor}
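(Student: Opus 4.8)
The plan is to deduce the statement from results already established, so that essentially no new argument is required. \emph{First} I would observe that, in the present $q$-difference setting (where $\tau(g)-g=g(qx)-g(x)$), assertions (1), (3) and (4) are nothing but assertions (4), (3) and (1) of Corollary \ref{cor:DiffAlgGa}; hence their mutual equivalence is already known. \emph{Next} I would bring in the new ingredient with respect to the finite difference case, namely Lemma \ref{lemma:EllipticFctsDiffAlgebraic}: since $k$ is differentially algebraic over $\C$, so is $K=k(x)$ over $\C(x)$, and, by transitivity of differential algebraicity in a tower, a meromorphic function of $F$ is differentially algebraic over $K$ if and only if it is differentially algebraic over $\C(x)$ (equivalently over $\C$). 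This is precisely Corollary \ref{cor:DiffALgebraicExtensions}, which therefore supplies the equivalence of (1) and (2). It is exactly this step that has no counterpart in the finite difference case --- compare Example \ref{exa:nondescentoverC} --- which is why Corollary \ref{cor:FiniteDifferenceGa} contains no assertion analogous to (2).

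\emph{Then}, for the ``moreover'' part I would copy the descent argument from the proof of Corollary \ref{cor:FiniteDifferenceGa}, substituting $qx$ for $x+1$. Let $C=\overline{\Q}$ or $C=\C$ and assume $f\in C(x)$. The implication from the last assertion to assertion (4) is immediate since $C\subset k$ and $C(x)\subset K$. For the converse, given a relation $\la_0 f+\la_1\partial(f)+\dots+\la_n\partial^n(f)=g(qx)-g(x)$ with $\la_i\in k$ and $g\in K$, I would write $g$ as a quotient of two polynomials in $x$ of degrees $N$ and $M$ with coefficients in $k$, introduce indeterminates $\La_0,\dots,\La_n,A_0,\dots,A_N,B_0,\dots,B_M$ over $C$, and impose the equality
\[
\La_0 f+\dots+\La_n\partial^n(f)=\frac{A_0+A_1(qx)+\dots+A_N(qx)^N}{B_0+B_1(qx)+\dots+B_M(qx)^M}-\frac{A_0+A_1x+\dots+A_Nx^N}{B_0+B_1x+\dots+B_Mx^M}.
\]
Clearing denominators and equating the coefficients of the powers of $x$ yields a system of polynomial equations over $C$ which, by hypothesis, has a solution in $k$; since $C$ is an algebraically closed subfield of $k$, the system already has a solution over $C$, which furnishes the required $\la_i\in C$ and $g\in C(x)$.

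\emph{The only point that requires genuine care} --- and therefore the step I expect to be the ``hard'' one, although it is entirely standard --- is in this descent: one has to check that the polynomial system is really defined over $C$, which uses $f\in C(x)$ together with the fact that $q\in C$ (automatic when $C=\C$, and the implicit running assumption when $C=\overline{\Q}$), and then to invoke that a consistent polynomial system defined over an algebraically closed field $C$ has a solution with all coordinates in $C$. Beyond this, the proof is a straightforward concatenation of Corollary \ref{cor:DiffAlgGa}, Lemma \ref{lemma:EllipticFctsDiffAlgebraic} and Corollary \ref{cor:DiffALgebraicExtensions}.
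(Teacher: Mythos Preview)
Your proposal is correct and matches the paper's own approach exactly: the paper simply states that, taking into account Lemma~\ref{lemma:EllipticFctsDiffAlgebraic}, the proof follows word by word that of Corollary~\ref{cor:FiniteDifferenceGa}, which is precisely the combination of Corollary~\ref{cor:DiffAlgGa}, Corollary~\ref{cor:DiffALgebraicExtensions}, and the descent argument that you spell out. Your remark that the descent requires $q\in C$ (automatic for $C=\C$, an implicit hypothesis for $C=\overline{\Q}$) is a useful precision that the paper leaves tacit.
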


We finally conclude by proving a result for homogenous order 1 $q$-difference equations:

\begin{cor}\label{cor:qDiffHomogenousBis}
\emph{\cite[Corollary 3.4]{CharlotteSinger:deltadeltapi}}
Let $a(x)\in\C(x)^\ast$ and let $z$ be a meromorphic function over $\C^\ast$ (resp. $\C$) be a solution of
$z(qx)=a(x)z(x)$.
The $z(x)$ is differentially algebraic over $\C(x)$ (or equivalently over $k(x)$) if and only if
$a(x)=cx^n\frac{g(qx)}{g(x)}$, for some $g(x)\in\C(x)$, $n\in\Z$ and $c\in\C$
(resp. $n=0$ and $c\in q^\Z$).
\end{cor}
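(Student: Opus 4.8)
The plan is to run the proof of Corollary~\ref{cor:qDiffHomogenous} with the present $q$-difference data $\tau:x\mapsto qx$, $\partial=x\frac{d}{dx}$. By Corollary~\ref{cor:DiffALgebraicExtensions} one may work over $K=k(x)$ rather than over $\C(x)$. The engine is Corollary~\ref{cor:DiffAlgGm}: $z$ is differentially algebraic over $K$ if and only if there exist $n\ge0$, $\la_0,\dots,\la_n\in k$ not all zero and $h\in K$ with $\la_0\tfrac{\partial(a)}{a}+\dots+\la_n\partial^n\!\big(\tfrac{\partial(a)}{a}\big)=\tau(h)-h$; and since $\tfrac{\partial(a)}{a}=x\tfrac{a'}{a}$ lies in $\C(x)$, the same descent argument as in the implication $3\Rightarrow4$ in the proof of Corollary~\ref{cor:FiniteDifferenceGa} (equate the coefficients of the powers of $x$ and use that an algebraic system with a solution over $k$ has one over the algebraically closed subfield $\C$, without killing the nonvanishing of the $\la_i$) lets one take $\la_i\in\C$ and $h\in\C(x)$. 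I will also use the elementary observation that if $z\not\equiv0$ is meromorphic at $x=0$, then comparing orders and leading terms at $0$ in $z(qx)=a(x)z(x)$ gives $\mathrm{ord}_0(a)=0$ and $a(0)\in q^\Z$; this is what yields the refinement in the ``$\C$'' case.

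For the implication ``$a$ of the stated form $\Rightarrow z$ differentially algebraic'' I would argue directly with Corollary~\ref{cor:DiffAlgGm}. Writing $a=c\,x^n\,\tfrac{g(qx)}{g(x)}$ with $g\in\C(x)^\ast$, $c\in\C^\ast$, $n\in\Z$, and setting $h_0:=\tfrac{\partial(g)}{g}\in\C(x)$, the $\partial$-logarithmic derivative gives $\tfrac{\partial(a)}{a}=n+\tau(h_0)-h_0$ (using $\tfrac{\partial(c)}{c}=0$, $\tfrac{\partial(x^n)}{x^n}=n$, $\tfrac{\partial(g(qx))}{g(qx)}=\tau(h_0)$); applying $\partial$ once more kills the constant and commutes with $\tau$, so $\partial\!\big(\tfrac{\partial(a)}{a}\big)=\tau(\partial h_0)-\partial h_0$. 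Thus assertion~(1) of Corollary~\ref{cor:DiffAlgGm} holds with $\la_0=0$, $\la_1=1$, $h=\partial h_0$, and $z$ is differentially algebraic over $K$, hence over $\C(x)$. If $z$ is moreover meromorphic on all of $\C$, the order comparison at $0$ (with $g$ normalised so that $g(0)\notin\{0,\infty\}$) forces $n=0$, $c\in q^\Z$; conversely, for such an $a$ the rational function $x^m g(x)$, where $c=q^m$, is already a solution lying in $\C(x)$.

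For the converse, suppose $z$ is differentially algebraic. First I would normalise: replacing $z$ by $z/p$ and $a$ by $a\cdot p(x)/p(qx)$ for a suitable $p\in\C(x)$ changes neither the differential algebraicity of $z$ nor the validity of the conclusion, so one may assume the zeros and poles of $a$ in $\C^\ast$ lie in pairwise distinct $q^\Z$-orbits; write $a=c_0x^{m_0}\prod_j(x-\a_j)^{m_j}$ accordingly, so $f_0:=\tfrac{\partial(a)}{a}=m_0+\sum_j\tfrac{m_j x}{x-\a_j}$. By Corollary~\ref{cor:DiffAlgGm} and the descent above, $\la_0 f_0+\dots+\la_n\partial^n(f_0)=h(qx)-h(x)$ for some $\la_i\in\C$ with $\la_n\ne0$ and $h\in\C(x)$. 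Now the left-hand side has at most one pole in any $q^\Z$-orbit contained in $\C^\ast$ and is holomorphic at $0$ and $\infty$; whereas, if $h$ had a pole in a $q^\Z$-orbit inside $\C^\ast$, then $h(qx)-h(x)$ would have at least two distinct poles there (one just below the lowest and one at the highest pole of $h$ in that orbit, using that $q$ is not a root of unity), and a pole of $h$ at $0$ or $\infty$ would likewise give one of $h(qx)-h(x)$. Hence $h$ has no pole on $\P^1$, so $h$ is constant and $\la_0 f_0+\dots+\la_n\partial^n(f_0)=0$. Since $\la_n\ne0$, a pole of order $r\ge1$ of $f_0$ at some $\a\in\C^\ast$ would force a pole of order exactly $r+n$ at $\a$ in this sum; so $f_0$ has no pole in $\C^\ast$, and being also finite at $0$ and $\infty$ it is constant, which forces every $m_j=0$. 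Thus $a=c_0x^{m_0}$, i.e. (after undoing the normalisation) $a=c\,x^{m_0}\tfrac{g(qx)}{g(x)}$ for suitable $g\in\C(x)$ and $c\in\C^\ast$; and if $z$ is meromorphic on $\C$ the order comparison at $0$ gives in addition $m_0=0$, $c\in q^\Z$.

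The step I expect to be delicate is the normalisation together with the pole bookkeeping: one must check that each zero or pole of $a$ can be transported onto a single representative of its $q^\Z$-orbit by multiplying $a$ by factors of the shape $q^{\pm1}p(qx)/p(x)$ with $p\in\C(x)$ (which only inserts harmless constants and no zero or pole at $0$ or $\infty$), and then follow pole orders on all of $\P^1$, especially at the $\tau$-fixed points $0$ and $\infty$. This is precisely where the $q$-difference case departs from the finite-difference one (Corollary~\ref{cor:qDiffHomogenous}): the extra fixed point $0$ carries the differentially algebraic solution $\Theta(x)$ of $\tau(y)=xy$ (Example~\ref{exa:Theta}), and it is this that makes the factor $x^n$ appear in the statement.
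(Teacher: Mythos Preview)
Your proof is correct, and for the converse implication (differential algebraicity of $z$ forces $a=cx^n\frac{g(qx)}{g(x)}$) it follows exactly the paper's line: normalise so that the zeros and poles of $a$ in $\C^\ast$ lie in distinct $q^\Z$-orbits, then use the pole bookkeeping on $\la_0\frac{\partial(a)}{a}+\dots+\la_n\partial^n\!\big(\frac{\partial(a)}{a}\big)=h(qx)-h(x)$. You are in fact more explicit than the paper here: you spell out that $h$ must be constant (by counting poles in $\C^\ast$ and at the fixed points $0,\infty$), and then that the vanishing of the left-hand side forces $\frac{\partial(a)}{a}$ to be pole-free in $\C^\ast$ via the dominant pole-order term $\la_n\partial^n$. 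The paper compresses this into a single sentence and cites Corollary~\ref{cor:qDifferenceGa} (which already packages the descent), whereas you redo the descent from Corollary~\ref{cor:FiniteDifferenceGa}; the content is the same.

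The genuine difference is in the forward direction. The paper proves it \emph{analytically}: it writes down an explicit meromorphic solution $z(x)=p(x)\frac{\Theta(cx)}{\Theta(x)}\Theta(x)^n g(x)$ using the Jacobi theta function, observes that $\partial\!\big(\frac{\partial\Theta}{\Theta}\big)\in k$, and invokes Lemma~\ref{lemma:EllipticFctsDiffAlgebraic} to conclude. You instead verify condition~(1) of Corollary~\ref{cor:DiffAlgGm} \emph{algebraically}, by computing $\frac{\partial(a)}{a}=n+\tau(h_0)-h_0$ with $h_0=\frac{\partial(g)}{g}$ and then $\partial\!\big(\frac{\partial(a)}{a}\big)=\tau(\partial h_0)-\partial h_0$. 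Your route is cleaner and self-contained---it needs neither the theta function nor the differential algebraicity of elliptic functions---while the paper's route has the virtue of exhibiting the solution explicitly, which makes the special role of the fixed point $0$ (and the factor $x^n$) transparent through $\Theta$, exactly the point you flag in your final remark.
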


\begin{proof}
If $a(x)=cx^n\frac{g(qx)}{g(x)}$, then a meromorphic solution in $F$ is given by $z(x)=p(x)\frac{\Theta(cx)}{\Theta(x)}\Theta(x)^ng(x)$,
where $p(x)\in k$ and $\Theta(x)=\sum_{n\in\Z}q^{-n(n+1)/2}x^n\in F$ is the Jacobi Theta function,
which verifies the functional equation $\Theta(qx)=x\Theta(x)$.
Notice that $\partial\l(\frac{\partial(\Theta(x))}{\Theta(x)}\r)\in k$, therefore $z$ is differentially algebraic over $\C(x)$.
In particular, if $n=0$, and $c$ is an integer power of $q$, the solution is also meromorphic at zero.
\par
Let us prove the inverse. We assume that $z$ is meromorphic over $\C^*$.
First of all, replacing $z(x)$ with $z(x)g(x)^{-1}$, for a convenient $g(x)\in\C(x)$, and $a(x)$ with $a(x)\frac{g(x)}{g(qx)}$, we
 can suppose that two distinguished poles of $a(x)$ do not differ by an integer power of $q$.
\par
It follows from Corollary \ref{cor:qDifferenceGa}, that $z(x)$ is differentially algebraic over $\C(x)$ if and only if
there exist a positive integer $n$, $\la_0,\dots,\la_n\in\C$ and $g\in \C(x)$ such that
$$
\la_0 \frac{\partial(a)}{a}+\la_1\partial\l(\frac{\partial(a)}{a}\r)+\dots+\la_n\partial^n\l(\frac{\partial(a)}{a}\r)=g(qx)-g(x).
$$
The differential relation above shows that $a(x)$ must have at least two poles in any non-zero $\tau$-orbit, which is in
contradiction with our assumptions, therefore we conclude that
$a(x)=cx^n$, for some $c\in\C$ and $n\in\Z$.
\par
If moreover $z$ is has a pole at zero, rather than an essential singularity, we can take the expansion of $z$ in $\C((x))$.
Plugging it into the equation $z(qx)=cx^nz(x)$,
we see that $n=0$ and hence that $c$ must be an integer power of $q$.
\end{proof}

\subsection{A particular case of the Ishizaki-Ogawara's theorem}
\label{subsec:ishizaki}

In the case of $q$-difference equations we give a Galoisian proof of the following statement, which is a particular case of
Ogawara's theorem \cite[Theorem 2]{Ogawara-formalIshizaki}. As already noted by Ogawara,
Ishizaki's theorem \cite[Theorem 1.2]{Ishizaki-hypertransc} can be deduced from his formal result.
The latter is proved using elementary complex analysis and it
is a crucial ingredient of \cite{Dreyfus-Hardouin-Roques-Singer-genus0}.
Both Ishizaki's and Ogawara's results are based on the idea that $q$-difference equations
``do not have many solutions which are meromorphic in a neighborhood of $0$''.
In this subsection we only need to assume that $q\neq 0$ is not a root of unity, hence we allow $q$ to have norm equal to $1$.

\begin{prop}\emph{\cite[Theorem 2]{Ogawara-formalIshizaki}}
\label{prop:Ogawara}
Let $q\in\C\smallsetminus\{0,\hbox{roots of unity}\}$, $f\in\C(x)$, $f\neq 0$ and let $z\in \C((x))$ be a formal power series solution of $\tau(z)=z+f$.
The following assertions are equivalent:
\begin{enumerate}
\item $z\in\C(x)$.
\item $z$ is algebraic over $\C(x)$.
\item $z$ is $D$-finite over $\C(x)$.
\item $z$ is differentially algebraic over $\C(x)$.
\end{enumerate}
\end{prop}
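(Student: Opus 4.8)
The implications $1\Rightarrow2\Rightarrow3\Rightarrow4$ are the elementary chain ``rational $\subset$ algebraic $\subset$ $D$-finite $\subset$ differentially algebraic'', so the whole content is $4\Rightarrow1$. The plan is first to install a setting in which the Galoisian machinery of the previous sections applies, and then to transcribe the information it gives into a statement about the Laurent coefficients of $z$. Concretely I take $F=\C((x))$ with $\tau:f(x)\mapsto f(qx)$ and the commuting derivation $\partial=x\frac{d}{dx}$, and $K=\C(x)$. Here is where the hypothesis on $q$ enters: a formal Laurent series, or a rational function, fixed by $x\mapsto qx$ is constant because $q$ is not a root of unity, so $F^{\tau}=K^{\tau}=\C=:k$. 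Then all the standing hypotheses of \S\ref{sec:PVrings}--\S\ref{subsec:Dfinite} hold, the equation $\tau(z)=z+f$ has the prescribed solution $z\in F$, and since $\Gal(L/K)$ for $L=K(z)$ is an algebraic subgroup of $\G_{a,\C}$ (Examples \ref{exa:log} and \ref{exa:additivegroup}), $z$ is already known to be either in $\C(x)$ or transcendental over it; so the task reduces to excluding a transcendental differentially algebraic solution, and in fact to producing the rationality of $z$ directly.

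Assume $z$ differentially algebraic over $\C(x)$. Then Corollary \ref{cor:DiffAlgGa}, assertion~2, provides $\la_0,\dots,\la_n\in\C$, not all zero, with $w:=\la_0z+\la_1\partial(z)+\dots+\la_n\partial^n(z)\in\C(x)$. Writing $z=\sum_m a_m x^m$ and using $\partial(x^m)=mx^m$, this reads $w=\sum_m P(m)a_m x^m$ with $P(t):=\sum_i\la_i t^i\neq0$; if $\deg P=0$ then $z\in\C(x)$ and we are done, so assume $\deg P\geq1$. On the other hand, comparing coefficients in $\tau(z)=z+f$, where $f=\sum_m b_m x^m$ is the Laurent expansion of $f$ at $0$, gives $(q^m-1)a_m=b_m$, hence (as $q^m\neq1$ for $m\neq0$) $a_m=b_m/(q^m-1)$ for $m\neq0$. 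Denoting by $w_m=P(m)a_m$ the Laurent coefficients of $w$, one obtains the identity $(q^m-1)\,w_m=P(m)\,b_m$, valid for $m\gg0$.

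The core is now a purely formal computation with these two sequences, and this is the step I expect to be the main obstacle, all the more since $|q|$ may equal $1$, so that no complex-analytic ``finitely many singularities'' argument is available and everything must be done over $\C$ combinatorially. Since $f,w\in\C(x)$, for $m$ large $b_m=\sum_{\sigma}B_\sigma(m)\sigma^m$ and $w_m=\sum_{\gamma}W_\gamma(m)\gamma^m$ are finite exponential--polynomial sums, the $\sigma$'s and $\gamma$'s being the inverses of the nonzero poles of $f$ and of $w$. Expanding $(q^m-1)w_m=\sum_\lambda(W_{\lambda/q}(m)-W_\lambda(m))\lambda^m$ and using uniqueness of exponential--polynomial representations, we get $W_{\lambda/q}(m)-W_\lambda(m)=P(m)B_\lambda(m)$ for every $\lambda\in\C^{\ast}$, with $W$ and $B$ extended by $0$ outside their finite supports. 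Fixing a $q$-orbit $\sigma q^{\Z}$ and running this relation downwards from an index above the supports of $W$ and $B$ along that orbit, where both vanish, one finds $W_{\sigma q^{j}}(m)=P(m)\sum_{j'>j}B_{\sigma q^{j'}}(m)$ for every $j$. In particular $P(m)$ divides $W_\gamma(m)$ in $\C[m]$ for each $\gamma$, so $a_m=w_m/P(m)=\sum_\gamma(W_\gamma(m)/P(m))\gamma^m$ is again exponential--polynomial for $m$ large, i.e.\ $(a_m)$ eventually satisfies a constant-coefficient linear recurrence. By the classical rationality criterion for formal Laurent series this means $z\in\C(x)$, which closes the cycle of implications.

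Besides the caveat on $|q|=1$, the two points needing care in the last step are the bookkeeping hidden in ``for $m$ large'' (the exponential--polynomial forms of $(b_m)$ and $(w_m)$ are only eventual, and $w$ may have a pole at $0$) and the possibility that poles of $f$ and of $w$ lie in a common $q$-orbit, which is precisely what the telescoping relation $W_{\lambda/q}-W_\lambda=P\,B_\lambda$ accounts for; one should also check that the relation at the bottom of each orbit is automatically consistent, which amounts to $\sum_{j}B_{\sigma q^{j}}\equiv0$ along the orbit.
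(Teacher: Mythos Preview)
Your proof is correct, but it follows a genuinely different route from the paper's. Both arguments invoke Corollary~\ref{cor:DiffAlgGa}, but the paper uses assertion~1 (the condition on $f$) while you use assertion~2 (the condition on $z$). The paper first performs a preliminary reduction: by repeatedly subtracting suitable rational functions from $z$, it arranges that $f$ has at most one pole in each $q$-orbit $\alpha q^{\Z}$; then the identity $\sum_i\la_i\partial^i(f)=\tau(g)-g$ forces a contradiction by pole-counting (the right-hand side cannot have a single pole in a nonzero orbit), so $f\in\C[x,x^{-1}]$ and then $z\in\C[x,x^{-1}]$ by direct coefficient comparison. Your argument bypasses the reduction entirely and works with the Laurent coefficients: from $w=\sum_i\la_i\partial^i(z)\in\C(x)$ you extract the exponential--polynomial shape of $(w_m)$, prove the divisibility $P(m)\mid W_\gamma(m)$ via the telescoping relation along $q$-orbits, and conclude that $(a_m)$ itself is eventually exponential--polynomial, hence $z\in\C(x)$. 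The paper's approach is shorter and more geometric (a two-line pole argument after the reduction); yours is more combinatorial but self-contained, avoids the somewhat fiddly iterative pole-shaving, and makes the rationality of $z$ completely explicit in terms of the partial-fraction data of $f$ and $w$. The consistency check you flag at the bottom of each orbit ($\sum_j B_{\sigma q^j}\equiv 0$) is indeed automatic: it follows by summing $W_{\lambda/q}-W_\lambda=P\,B_\lambda$ over the orbit, since the left side telescopes to zero.
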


\begin{proof}
The implications $1\Rightarrow 2\Rightarrow 3\Rightarrow 4$ are trivial.
We prove that $4\Rightarrow 1$.
We decompose $f(x)$ into elementary fractions and we take care of each part of the decomposition separately.
We consider a pole $\a\in\C^\ast$ of $f(x)$ such that all the other poles of $f(x)$ in $\a q^\Z$ are of the form $q^{-n}\a$, with $n\geq 0$.
Let $N_\a$ the largest integer such that $q^{-N_\a}\a$ is a pole of $f(x)$ and $\sum_{i}\frac{a_i}{(x-q^{-N_\a}\a)^i}$ be the polar part of $f(x)$ at $q^{-N_\a}\a$.
We set $h_1(x)=\sum_{i} \frac{q^ia_i}{(x-q^{-N_\a}\a)^i}$. Replacing $z(x)$ with $z_1(x)=z(x)+h_1(x)$, we are reduced to consider a new functional equation
$y(qx)=y(x)+f_1(x) $, with  $f_1(x)=f(x)-h_1(qx)+h_1(x)$, which has a smaller $N_\a$.
Iterating the argument we obtain a $q$-difference equation with an inhomogeneous term $f(x)$ having at most a single pole in each $q$-orbit $\a q^\Z$.
Corollary \ref{cor:DiffAlgGa} (for $F=\C((x))$ and $K=\C(x)$) implies that there exist $n\geq 0$, $\la_0,\dots,\la_n\in \C$, not all zero, and $g\in\C(x)$ such that
$\la_0f+\la_1\partial(f)+\dots+\la_n \partial^n(f)=\tau(g)-g$.
Since $\tau(g)-g$ cannot have a single pole in $q^\Z\a$, for $\a\neq 0$, we conclude that the rational function
$f(x)$ must have no pole at all in $q^\Z\a$.
We are reduced to prove the claim in the case $f\in\C[x,x^{-1}]$, but this assumption obliges $z(x)\in\C((x))$ to be an element of $\C[x,x^{-1}]$,
as one can see directly from the equation $f(x)=z(qx)-z(x)$, identifying the coefficients of $x^n$, for every integer $n$.
This ends the proof.
\end{proof}

The expansion at zero defines an injective morphism from the field of meromorphic functions over $\C$ to $\C((x))$,
which commutes to the action of $\partial$, therefore we obtain:

\begin{cor}\emph{\cite[Theorem 1.2]{Ishizaki-hypertransc}}
Let $f\in\C(x)$, $f\neq 0$ and let $z\in F$ be a meromorphic function over $\C$, solution of $\tau(z)=z+f$.
Then the assertions of Proposition \ref{prop:Ogawara} are equivalent for $z$.
\end{cor}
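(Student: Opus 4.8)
The plan is to deduce this corollary from the formal statement, Proposition~\ref{prop:Ogawara}, by passing to Laurent expansions at the origin. Since $z$ is assumed to be meromorphic on all of $\C$ — not merely on $\C^\ast$, where the point $0$ could a priori be an essential singularity — it has at worst a pole of finite order at $0$, so its Laurent expansion there is a genuine element of $\C((x))$. I would first record the properties of the resulting map
$$
\iota:\{\hbox{meromorphic functions on }\C\}\longrightarrow\C((x)),\qquad h\mapsto(\hbox{Laurent expansion of }h\hbox{ at }0).
$$
It is a ring homomorphism; it is injective, because a function whose Laurent expansion at $0$ vanishes is holomorphic and identically zero near $0$, hence identically zero on the connected set $\C$ by analytic continuation; it commutes with $\partial=x\frac{d}{dx}$, by term-by-term differentiation; it restricts to the identity on $\C(x)$, since a rational function equals its own Laurent series in $\C((x))$; and it is $\tau$-equivariant, in the sense that if $\iota(z)=\sum_n a_nx^n$ then $\iota(\tau(z))=\sum_n a_nq^nx^n$, which is precisely the action of $\tau$ on $\C((x))$ used in Proposition~\ref{prop:Ogawara}.

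With $\iota$ available, the argument is short. The implications $1\Rightarrow 2\Rightarrow 3\Rightarrow 4$ hold for the meromorphic function $z$ for the usual reasons (rational $\Rightarrow$ algebraic $\Rightarrow$ $D$-finite $\Rightarrow$ differentially algebraic), so only $4\Rightarrow 1$ needs proof. Assume $z$ is differentially algebraic over $\C(x)$, say $z,\partial(z),\dots,\partial^n(z)$ satisfy a nontrivial polynomial relation with coefficients in $\C(x)$. Applying $\iota$ and using that it is a ring homomorphism fixing $\C(x)$ and commuting with $\partial$, the series $\iota(z)\in\C((x))$ satisfies the same relation, hence is differentially algebraic over $\C(x)$; moreover $\iota(z)$ solves $\tau(\iota(z))=\iota(z)+f$, because $\iota(f)=f$ and $\iota$ is $\tau$-equivariant. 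Proposition~\ref{prop:Ogawara} then yields $\iota(z)\in\C(x)$, and since $\iota$ is injective and is the identity on $\C(x)$, this forces $z\in\C(x)$, which is assertion~$1$.

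I do not expect a real obstacle: essentially all the content has been absorbed into Proposition~\ref{prop:Ogawara}, and what remains is the routine verification of the five properties of $\iota$ above (together with the standard facts linking the four conditions). The one place where the hypothesis is genuinely used, as opposed to formal manipulation, is that $z$ is meromorphic over $\C$: this is exactly what guarantees that $\iota(z)$ lies in $\C((x))$ rather than in some larger field of series, so that Proposition~\ref{prop:Ogawara} applies at all.
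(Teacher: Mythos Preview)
Your proposal is correct and follows exactly the paper's approach: the paper's entire argument is the one-line observation that the Laurent expansion at $0$ defines an injective $\partial$-equivariant morphism from meromorphic functions on $\C$ into $\C((x))$, after which Proposition~\ref{prop:Ogawara} applies directly. You have simply made explicit the properties of this morphism (ring homomorphism, injective, identity on $\C(x)$, $\tau$- and $\partial$-equivariant) that the paper leaves implicit.
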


\begin{rmk}
The reader can find a Galoisian proof of the Ishizaki theorem in whole generality in \cite[Proposition 3.5]{CharlotteSinger:deltadeltapi},
i.e., for equations of the form $\tau(y)=ay+f$. The general statement can be proven using
the parameterized Galois theory of difference equations.
\end{rmk}

\begin{rmk}
We make some comments on the relation between convergent and meromorphic solutions, under the assumption that $|q|\neq 1$:
\begin{enumerate}
  \item An important property of $q$-difference equations is the following:
\begin{quote}
{\it For a solution of a linear $q$-difference equation with meromorphic coefficients over $\C^\ast$, it is equivalent to be meromorphic in a neighborhood of
zero and to be meromorphic over $\C$.}
\end{quote}
The proof is quite easy and relies on the fact that we have supposed that $|q|\neq 1$. In fact, this allows to consider a meromorphic continuation
of the solution thanks to the fact that any point can be ``brought next to zero'' with a repeated application of $\tau$
or of $\tau^{-1}$. It seems that this remark is originally due to H. Poincaré \cite[page 318]{poincare-1890}.
  \item Let us suppose that $z$ is a meromorphic function over $\C$ and algebraic over $\C(x)$. Then $z$ is a meromorphic function over $\C$, which has at worst a pole at $\infty$,
hence it is rational. This proves that $2\Rightarrow 1$ in Proposition \ref{prop:Ogawara} is true for all linear $q$-difference equations with rational coefficients,
as soon as the solution is meromorphic at $0$.
\end{enumerate}
\end{rmk}


\section{The Galois correspondence (second part)}
\label{sec:GaloisCorrespondenceNormal}

In this section we are going to focus on the role of normal subgroups in the Galois correspondence, under the following assumption.

\begin{assumption}
We suppose that $\tau$ is an automorphism of $F$ and induces an automorphism of $K$.
In difference algebra, when $\tau$ is an automorphism, i.e. admits an inverse, is usually called inversive.
\end{assumption}

The assumption above immediately implies that $\tau$ is also an automorphism of any \PV\ ring and any \PV\ field contained in $F$.
In fact, if $U$ is a fundamental
solution of a system $\tau(Y) = AY$ as in \eqref{eq:system}, we also have $\tau^{-1}(U)=\tau^{-1}(A^{-1})U$.
Notice that we continue to work under the assumption of \S\ref{sec:PVrings} and in particular
that $F$ contains a fundamental solution of the linear system $\tau(Y) = AY$ with coefficients in $K$,
and hence, that all our \PV\ ring and extensions are contained in $F$.

The main result of this section is the following:

\begin{thm}\label{thm:GaloisCorrespondence2}
In the notation of Theorem \ref{thm:GaloisCorrespondence1} above,
let $H$ be an algebraic subgroup of $G$ defined over $k$ and
let $M=L^H$. The following assertions are equivalent:
\begin{enumerate}
  \item $H$ is a normal subgroup of $G$;
  \item $M$ is a \PV\ field over $K$ (for a convenient linear difference equation).
\end{enumerate}
Assuming the equivalent conditions above, the algebraic group
$\Gal(M/K)$ is naturally isomorphic to $G/H$.
\end{thm}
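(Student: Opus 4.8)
The plan is to prove the two implications of the equivalence and then identify the quotient.

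\textbf{From (2) to (1).} Suppose $M = L^H$ is itself a \PV\ field over $K$, say for a system $\tau(Z) = BZ$ with a fundamental solution matrix $W \in \GL_e(M)$. I would first observe that, since $M/K$ is a \PV\ extension and $L/K$ is a \PV\ extension containing $M$, the restriction map $\rho: G(B') \to \Gal(M/K)(B')$, $\varphi \mapsto \varphi|_{M \otimes_k B'}$, is well-defined for every $k$-algebra $B'$: indeed any $\tau$-automorphism of $L \otimes_k B'$ over $K \otimes_k B'$ must send the entries of $W$ (which solve $\tau(Z) = BZ$) to another fundamental solution, hence preserves $M \otimes_k B'= (K[W, \det W^{-1}] \otimes_k B')$'s fraction structure. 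The kernel of $\rho$ is exactly the subfunctor of $\varphi$ fixing $M$, which by the Galois correspondence (Theorem \ref{thm:GaloisCorrespondence1}, applied with base field $M$) is $\Gal(L/M)$; but $M = L^H$ gives $\Gal(L/M) = \Gal(L/L^H) = H$ by the bijectivity in Theorem \ref{thm:GaloisCorrespondence1}. A kernel of a morphism of group schemes is normal, so $H \triangleleft G$.

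\textbf{From (1) to (2).} This is the direction I expect to be the main obstacle, and the natural route is via Tannakian/invariant-theoretic input or, following the references, via the corresponding statement in \cite{OvchinnikovWibmerSGaloisTheoryofLinearDifferenceEquations}. The strategy: assume $H \triangleleft G$ and produce a difference equation over $K$ whose \PV\ field inside $F$ is $M = L^H$. One picks the original fundamental matrix $U \in \GL_d(R)$ and considers the finitely generated $K$-$\tau$-algebra $R^H := R \cap M$ (the $H$-invariants of $R$); the point is that $G/H$ is again an algebraic group over $k$ (a quotient of affine group schemes by a normal subgroup, which exists in the category of affine group schemes over a field), and one wants to exhibit $R^H$ as a \PV\ ring for some system. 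Concretely, because $H$ is normal, the $G$-action on $R$ descends to an action of $G/H$ on $R^H$; one shows $R^H$ is $\tau$-simple (it is the ring of invariants of a $\tau$-simple ring under a group acting compatibly with $\tau$, so a $\tau$-ideal of $R^H$ generates a $\tau$-ideal of $R$ and one pulls back), and that $R^H$ is generated over $K$ by the entries of a matrix solving a linear system with coefficients in $K$ — the latter system is obtained by a faithfully-flat descent / "constructing an equation from a $G/H$-representation" argument: $G/H \hookrightarrow \GL_e$ for some $e$ via a faithful representation, pull this back along $G \to G/H$, realize it inside some $\GL_N$ built from symmetric powers and duals of the standard representation on $U$, and read off the associated equation over $K$ using that the matrix of the representation is defined over $K$. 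Here I would simply cite \cite[\S2.7]{OvchinnikovWibmerSGaloisTheoryofLinearDifferenceEquations} for the existence of this equation and the identification, rather than reproducing the construction.

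\textbf{The isomorphism $\Gal(M/K) \cong G/H$.} Granting either direction, we have the restriction morphism $\rho: G \to \Gal(M/K)$ with kernel $H$, so it factors through a monomorphism $\bar\rho: G/H \to \Gal(M/K)$. For surjectivity (i.e. that $\bar\rho$ is an isomorphism of group schemes), I would argue via the Galois correspondence applied to $M = L^H$: the image $\bar\rho(G/H)$ is an algebraic subgroup $H'$ of $\Gal(M/K)$, and its fixed field $M^{H'}$ equals the set of elements of $M$ fixed by all of $G$, which is $L^G = K$; by the last sentence of Theorem \ref{thm:GaloisCorrespondence1} (now with base $K$ and \PV\ field $M$), $M^{H'} = K$ forces $H' = \Gal(M/K)$. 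Since a monomorphism of affine group schemes over a field that is also an epimorphism in the appropriate sense is an isomorphism — more precisely, $k[\Gal(M/K)] \to k[G/H]$ is injective (surjectivity of $\bar\rho$ on enough points / the image being everything) and surjective (as $\bar\rho$ is a closed immersion) — we conclude $\bar\rho$ is an isomorphism. I would phrase the final comparison of Hopf algebras carefully, since over a non-algebraically-closed $k$ "surjective on $k$-points" is not enough; the clean statement is that $\bar\rho^*: k[\Gal(M/K)] \to k[G/H]$ is injective because $\bar\rho$ is faithfully flat (a consequence of $M/K$ being \PV\ and the image having trivial fixed field) and surjective because $\bar\rho$ is a closed immersion, hence bijective.
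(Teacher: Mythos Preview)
Your direction $(2)\Rightarrow(1)$ matches the paper's: restrict the $G$-action to the sub-\PV\ ring $R_M=R\cap M$, obtain a group-scheme morphism $G\to\Gal(M/K)$ whose kernel is $H$, and conclude normality. The paper also handles the isomorphism $\Gal(M/K)\cong G/H$ essentially as you do, via the restriction map (though it states this more briefly in a remark).

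Your direction $(1)\Rightarrow(2)$, however, takes a genuinely different route. You propose a Tannakian construction: choose a faithful representation of $G/H$, pull it back to $G$, realize it inside tensor constructions on the standard representation, and extract a system over $K$ --- or simply cite \cite{OvchinnikovWibmerSGaloisTheoryofLinearDifferenceEquations}. The paper instead argues elementarily and self-containedly: it first proves (Proposition~\ref{prop:elements of PV}) that an element of $L$ lies in $R$ iff its forward $\tau$-orbit spans a finite-dimensional $K$-space, a statement whose proof \emph{uses that $\tau$ is inversive}. From this it deduces that for any $G$-stable $\tau$-$K$-subalgebra $\widetilde R\subset R$, each element satisfies a linear $\tau$-equation over $K$ with a full solution space inside $\widetilde R$. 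Normality of $H$ is used exactly to show that $R_M:=R\cap L^H$ is $G$-stable, after which this machinery exhibits $R_M$ as generated by a fundamental solution of a system over $K$.

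What each buys: the paper's argument is concrete, avoids any appeal to quotient group schemes or representation theory, and makes transparent \emph{where} the inversive hypothesis (the section's standing Assumption) enters. Your approach is more structural but leaves that hypothesis unexplained; if you pursue it, you should either identify where inversivity is needed in the cited construction or note that your argument would work more generally. Also, your sketch of $\tau$-simplicity of $R^H$ (``a $\tau$-ideal of $R^H$ generates a $\tau$-ideal of $R$ and one pulls back'') needs care: concluding $I=R^H$ from $IR=R$ requires a faithfully-flat or descent argument you have not supplied, whereas the paper sidesteps this by first showing $R_M=K[V,\det V^{-1}]$ and then invoking Proposition~\ref{prop;tausemplicity}.
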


In order to complete the proof of the Galois correspondence,
we need to prove a quite classical proposition on the action of the Galois group on the elements of the Picard-Vessiot extension.
The proof is not difficult and indeed it is quite similar to the differential case \cite[Corollary 1.38]{vanderPutSinger:DifferentialGaloisTheory}, but,
to the best of my knowledge, it is not detailed anywhere in the literature.
Notice that the hypothesis that $\tau$ is inversive is a central ingredient.

\begin{prop}\label{prop:elements of PV}
Let $R\subset F$ be the \PV\ ring for a linear difference system of the form
\eqref{eq:system} over $K$ and $f$ an element of the field of fractions of $R$.
The following statements are equivalent:
\begin{enumerate}
\item
$f\in R$;
\item
the $K$-vector space spanned by $\{\tau^n(f), n\geq 0\}$ has finite dimension.
\end{enumerate}
\end{prop}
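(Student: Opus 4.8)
The plan is to prove the two implications separately; only the implication $(2)\Rightarrow(1)$ will genuinely use that $\tau$ is inversive. Start with $(1)\Rightarrow(2)$, which is the standard part. Write $R=K[U,\det U^{-1}]$ with $\tau(U)=AU$, $A\in\GL_d(K)$, and for each $n\geq 0$ let $\widetilde R_n\subseteq R$ be the finite-dimensional $K$-vector space spanned by the products of at most $n$ entries of $U$ times $\det(U)^{-b}$ with $b\leq n$. Since $\tau$ sends each entry of $U$ to a $K$-linear combination of entries of $U$, and multiplies $\det(U)^{-1}$ by the scalar $\det(A)^{-1}\in K$, one checks immediately that $\tau(\widetilde R_n)\subseteq\widetilde R_n$. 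As $R=\bigcup_n\widetilde R_n$, any $f\in R$ lies in some $\widetilde R_n$, hence so do all the $\tau^j(f)$, and therefore the $K$-vector space spanned by $\{\tau^j(f):j\geq 0\}$ is finite-dimensional. This direction requires no invertibility hypothesis on $\tau$.

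For $(2)\Rightarrow(1)$, set $W$ to be the $K$-vector space spanned by $\{\tau^n(f):n\geq 0\}$, which is finite-dimensional by hypothesis and satisfies $\tau(W)\subseteq W$ because $\tau(K)=K$. The crucial step — and the one place where inversiveness is used — is to promote this to $\tau(W)=W$: since $\tau$ restricts to an automorphism of $K$, for $\lambda\in K$ and $w\in W$ one has $\lambda\tau(w)=\tau\bigl(\tau^{-1}(\lambda)w\bigr)\in\tau(W)$, so $\tau(W)$ is a $K$-subspace of $W$; and a $K$-basis of $W$ is sent by $\tau$ to a $K$-linearly independent family (using that $\tau$ is injective and semilinear over the automorphism $\tau$ of $K$), so $\dim_K\tau(W)=\dim_K W$, forcing $\tau(W)=W$. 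By the same token $\tau$ restricts to an automorphism of $R$: since $A\in\GL_d(K)$ and $\tau$ is an automorphism of $K$, one has $\tau(R)=K[AU,\det(A)^{-1}\det(U)^{-1}]=R$.

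Now let $N:=\sum_{i=1}^m Re_i\subseteq L$, where $e_1,\dots,e_m$ is a $K$-basis of $W$: this is a nonzero finitely generated $R$-submodule of $L=\mathrm{Frac}(R)$ containing $f$, and the two automorphism statements above give $\tau(N)=N$. Finally run a conductor argument: the ideal $\mathfrak c:=\{x\in R:xN\subseteq R\}$ of $R$ is nonzero (clearing a common denominator of the $e_i$ produces a nonzero element of $\mathfrak c$), and it is a $\tau$-ideal, since for $x\in\mathfrak c$ one has $\tau(x)N=\tau(x)\tau(N)=\tau(xN)\subseteq\tau(R)=R$. As $R$ is $\tau$-simple by Proposition \ref{prop;tausemplicity}, we get $\mathfrak c=R$, hence $N\subseteq R$ and in particular $f\in R$. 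The main obstacle is exactly the equality $\tau(W)=W$: if one only knew $\tau(W)\subseteq W$, then $N$ would satisfy merely $\tau(N)\subseteq N$, which is not enough to make $\mathfrak c$ a $\tau$-ideal, and this is the point at which the hypothesis that $\tau$ be inversive cannot be removed.
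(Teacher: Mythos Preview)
Your proof is correct and follows essentially the same route as the paper's: for $(1)\Rightarrow(2)$ both exhibit a filtration of $R$ by finite-dimensional $\tau$-stable $K$-subspaces, and for $(2)\Rightarrow(1)$ both establish $\tau(W)=W$ from inversiveness and then run the conductor-ideal argument using the $\tau$-simplicity of $R$. Your detour through the $R$-module $N=RW$ is purely cosmetic---its conductor into $R$ coincides with that of $W$---so the argument is the paper's, just written out a bit more explicitly.
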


\begin{proof}
Let us prove that $(1)\Rightarrow (2)$.
We remind that there exists a fundamental solution
matrix $U$ of a difference system of the form \eqref{eq:system}, such that
$R=K[U,\det U^{-1}]$.
Let us denote by $t_1,\dots, t_{d^2+1}$ the elements of the matrix $U$, plus $\det U^{-1}$.
Since $\tau(U)=AU$ and $\tau(\det U^{-1})=\det A^{-1}\cdot \det U^{-1}$, for any integer $r\geq 1$,
the $K$-vector space generated by the monomials of degree $r$ in the $t_i$'s and their $\tau$-iterated has finite dimension over $K$.
This proves the statement, because any $f\in R$ can be written as a polynomial in the $t_i$'s and hence the
$K$-vector space spanned by $\{\tau^n(f), n\geq 0\}$ is contained in a finite dimensional $K$-vector space.
\par
We now show that $(2)\Rightarrow (1)$.   Let $W$ be the $K$-vector space generated by $\{\tau^n(f), n\geq 0\}$.
We consider the ideal of $R$ defined by $I:=\{a\in R\vert aW\subset R\}$.
Since $f\in L$ and $L$ is the field of fractions of $R$, the ideal $I$ is non-zero.
Moreover $\tau$ is inversive, hence $W\subset\tau^{-1}W$.
Since $W$ and $\tau^{-1}(W)$ are vector spaces of the same dimension, this implies that $\tau^{-1}(W)=W$.
We conclude that $\tau(a)W\subset \tau(a W)\subset R$ for any $a\in I$
and therefore that $I$ is $\tau$-invariant.
Finally, $1\in I$, because $R$ is $\tau$-simple, and  $f\in W\subset R$.
\end{proof}

\begin{cor}
Let $L/K$ be a \PV\ extension and $R$ be the \PV\ ring of $L$.
\begin{enumerate}
  \item
  Let $M$ be an intermediate field which is itself a \PV\ field over $K$.
  Moreover let $R_M$ be its \PV\ ring. Then $R_M=M\cap R$.
  \item
  We fix $f\in R$ and a linear difference equation $\cL(y)=0$ with coefficients in $K$ such that $\cL(f)=0$.
  Furthermore, we suppose that the operator associated with the equation $\cL(y)=0$ has  minimal order $m$
  in $\tau$.
  Then the solutions of $\cL(y)=0$ in $R$ form a $k$-vector space of solutions of maximal dimension $m$.
\end{enumerate}
\end{cor}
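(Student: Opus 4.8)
The plan is to treat the two assertions separately, using Proposition \ref{prop:elements of PV} as the main tool for the first and the structure of $R$ as a $\tau$-simple algebra for the second. For assertion 1, I would prove the two inclusions $R_M\subseteq M\cap R$ and $M\cap R\subseteq R_M$. For the first, choose a fundamental matrix $V$ with $R_M=K[V,\det V^{-1}]$, $\tau(V)=A'V$ for some $A'\in\GL_{d'}(K)$, and $M=K(V)$; the entries of $V$ and the element $\det V^{-1}$ lie in $M\subseteq L=\operatorname{Frac}(R)$, and since $\tau(V)=A'V$ and $\tau(\det V^{-1})=\det(A')^{-1}\det V^{-1}$ with $A'$ over $K$, each of them spans, together with its $\tau$-iterates, a finite-dimensional $K$-vector space; by Proposition \ref{prop:elements of PV} they all belong to $R$, hence $R_M\subseteq R$, and trivially $R_M\subseteq M$. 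For the reverse inclusion, take $f\in M\cap R$: since $f\in R$, Proposition \ref{prop:elements of PV} gives $\dim_K\sum_{n\ge 0}K\tau^n(f)<\infty$, and since $f\in M=\operatorname{Frac}(R_M)$ the same proposition applied now to the Picard--Vessiot ring $R_M$ yields $f\in R_M$.

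For assertion 2, write $V_{\cL}=\{y\in R:\cL(y)=0\}$, which is a $k$-vector space because $k=R^\tau$. The bound $\dim_k V_{\cL}\le m$ is the usual Casoratian argument: for any $m+1$ elements of $V_{\cL}$ the last row of their $(m+1)\times(m+1)$ Casorati matrix is a $K$-combination of the others (as $\cL$ is monic of order $m$), so the determinant vanishes and the elements are $k$-linearly dependent — this can be checked in the total ring of fractions of $R$, a pseudo field with field of constants $k$ (Remark \ref{rmk:Picard-Vessiot}). To see that the dimension is exactly $m$, I would first record that minimality of $\cL$ forces its constant coefficient to be non-zero (if $\cL=\cL'\tau$ then, $\tau$ being inversive, $\tau^{-1}\cL'\tau\in K[\tau]$ is an operator of order $m-1$ annihilating $f$, a contradiction) and, by the same minimality, that $f,\tau(f),\dots,\tau^{m-1}(f)$ are linearly independent over $K$.

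The heart of the matter is then the following. Set $W=\sum_{n\ge 0}K\tau^n(f)=\bigoplus_{i=0}^{m-1}K\tau^i(f)\subseteq R$, a $\tau$-stable $K$-subspace of dimension $m$, where stability and the equality $\tau(W)=W$ use that $\tau$ is inversive. The assignment $\tau^i\mapsto\tau^i(f)$ identifies $W$ with the difference module $K[\tau]/K[\tau]\cL$ (the left ideal of operators annihilating $f$ is $K[\tau]\cL$, $\cL$ having minimal order and $K[\tau]$ being a left principal ideal domain), so for every $\tau$-$K$-algebra $S$ one has $\{y\in S:\cL(y)=0\}\cong\Hom_{K[\tau]}(W,S)$ as vector spaces over the constants, and in particular $V_{\cL}\cong\Hom_{K[\tau]}(W,R)$ over $k$. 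Now $R\otimes_K W$ is a $\tau$-stable $R$-submodule of $R\otimes_K R$, and the structural isomorphism $R\otimes_K R\cong R\otimes_k(R\otimes_K R)^\tau$ underlying Proposition \ref{prop:algebraicgp} restricts to an identification $R\otimes_K W\cong R\otimes_k W^{\#}$ with $W^{\#}:=(R\otimes_K W)^\tau$; comparing $R$-ranks gives $\dim_k W^{\#}=\dim_K W=m$. Since $\tau$ is trivial on $W^{\#}$ and $R^\tau=k$, a $\tau$-equivariant $R$-linear map $R\otimes_k W^{\#}\to R$ is nothing but a $k$-linear map $W^{\#}\to k$, whence
$$
V_{\cL}\cong\Hom_{K[\tau]}(W,R)\cong\Hom_{R[\tau]}(R\otimes_k W^{\#},R)\cong\Hom_k(W^{\#},k),
$$
a $k$-vector space of dimension $m$; together with the bound $\dim_k V_{\cL}\le m$ this gives the ``maximal dimension'' claim.

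I expect the main obstacle to be the identification $R\otimes_K W\cong R\otimes_k(R\otimes_K W)^\tau$ with $\dim_k(R\otimes_K W)^\tau=\dim_K W$: this is the precise statement that a finite-dimensional $\tau$-stable subspace of a Picard--Vessiot ring becomes trivial after base change to $R$, and it is exactly where $\tau$-simplicity of $R$ and the equality $R^\tau=K^\tau=k$ (together with $\tau$ inversive, to guarantee $\tau(W)=W$) really enter. One can obtain it by running the chain of isomorphisms in the proof of Proposition \ref{prop:algebraicgp} with the difference submodule $W$ in place of $R$, or by invoking the corresponding statement in \cite{OvchinnikovWibmerSGaloisTheoryofLinearDifferenceEquations}; the remaining ingredients — the Casoratian bound, non-vanishing of the constant coefficient of $\cL$, and the $\Hom$-space bookkeeping — are routine.
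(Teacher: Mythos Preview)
Your argument for part 1 is correct and is exactly the paper's: both inclusions follow from Proposition~\ref{prop:elements of PV} applied once to $R$ and once to $R_M$.

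For part 2 your proof is correct but takes a genuinely different route from the paper's. The paper argues as follows: take a $k$-basis $w_1,\dots,w_r$ of the solution space $W\subset R$, form the Casorati determinant to obtain an operator $\wtilde\cL$ of order $r$ with coefficients \emph{a priori} in $L$; since $W\otimes_kB$ is stable under every $\varphi\in G(B)$, the Galois correspondence (Theorem~\ref{thm:GaloisCorrespondence1}, specifically $L^G=K$) forces the coefficients of $\wtilde\cL$ to lie in $K$; minimality of $\cL$ then gives $\cL=c\,\wtilde\cL$ for some $c\in K^\ast$, hence $r=m$. This is short and stays entirely within the Galois-theoretic machinery already set up.

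Your approach instead bypasses the Galois group and works directly with the $\tau$-simple structure of $R$: you identify $W$ with the cyclic difference module $K[\tau]/K[\tau]\cL$, pass to $\Hom_{K[\tau]}(W,R)$, and then invoke the trivialization $R\otimes_K W\cong R\otimes_k(R\otimes_K W)^\tau$ of finite free difference modules over a $\tau$-simple ring. This is more structural and makes transparent why the solution space has the ``right'' dimension, but, as you yourself flag, the trivialization lemma for a general finite $\tau$-stable $W\subset R$ is not literally Proposition~\ref{prop:algebraicgp} (which treats the special case $W=R$); you need either to rerun its proof for $W$ or to import the statement from \cite{OvchinnikovWibmerSGaloisTheoryofLinearDifferenceEquations}. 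One minor remark: your appeal to pseudo-fields in the Casoratian bound is unnecessary here, since in the paper's standing hypothesis $R\subset F$ is a domain and $\operatorname{Frac}(R)=L$ is an honest field.
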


\begin{proof}
1. The statement follows from the previous proposition, since $f\in M\cap R$ if and only if $f\in M$ and $f$ is a solution of a linear difference equation
with coefficients in $K$.

\par\noindent
2.
Let $W$ be the space of solution of $\cL(y)=0$ in $R$. Since $f\in W$, we know that $W\neq 0$ and
we can consider a $k$-basis $w_1,\dots, w_r$ of $W$.
The following formula
$$
\det
\left(\begin{array}{ccccc}
w_1 & w_2  & \cdots &  w_r & y \\
\tau (w_1) & \tau (w_2) & \cdots & \tau (w_r)& \tau(y)\\
\vdots & \vdots & \ddots & \vdots & \vdots \\
\tau^{r}(w_1)& \tau^{r}(w_2) & \cdots & \tau^{r}(w_r) & \tau^r(y)
\end{array}\right)=0
$$
gives a $\tau$-difference equation $\wtilde\cL(y)=0$ with coefficients in $L$ having $W$ as space of solutions.
By definition of the Galois group, $\varphi(W\otimes_k B)=W\otimes_k B$, for any $\varphi\in G(B)$ and any $k$-algebra $B$.
The Galois correspondence and the invariance by the action of the Galois group show that the coefficients of $\wtilde\cL(y)=0$ are actually in $K$.
Because of the minimality of the order of the operator associated with $\cL(y)=0$, we conclude that $\cL$ and $\wtilde\cL$ coincide up to the multiplication of a non-zero
element of $K$. This implies that $W\subset R$ has maximal dimension $m$ over $k$.\referee{I have slightly simplified the presentation of the proof.}
\end{proof}

\begin{rmk}\label{rmk:PV-equations}
Notice that in the proof of the second statement above, we could replace $R$ by any $\tau$-$K$-algebra $\wtilde R\subset R$,
such that for any $k$-algebra $B$ and any $\psi\in G(B)$, we have $\psi(\wtilde R\otimes B)\subset (\wtilde R\otimes B)$.
\end{rmk}

\begin{proof}[Proof of Theorem \ref{thm:GaloisCorrespondence2}.]
Let $M$ be a \PV\ field. Then $R_M:=R\cap M$ is a \PV\ ring, which is generated by the entries of a matrix $U$ solution of a
difference linear system with coefficients in $K$, and its inverse.
By definition of the difference Galois group, for any $k$-algebra $B$ and any $\psi\in G(B)$ we have
$\psi(R_M\otimes B)\subset  R_M\otimes B$.
It implies that we have a natural group morphism $G(B)\to \Gal (M/K)(B)$, given by the restriction of the morphisms.
The kernel coincides with $H(B)$, hence $H$ is a normal subgroup of $G$.
\par
Let us suppose that $H$ is a normal subgroup of $G$. We set $M=L^H$ and $R_M=R\cap M$,
so that any $\varphi\in H(B)$ induces the identity over $R_M\otimes B$.
Because of the normality of $H(B)$ in $G(B)$, any $\psi\in G(B)$ verifies $\psi(R_M\otimes B)\subset  R_M\otimes B$.
Finally Remark \ref{rmk:PV-equations} shows that $R_M$ is generated by the solution of a linear difference equations, and hence that it is
a \PV\ ring. We deduce that $M$ is the field of fraction of $R_M$ because
they both coincide with $L^H$.
\end{proof}

\begin{rmk}
In the notation of the proof, for any $k$-algebra $B$, we have a functorial isomorphism\referee{R2. I have rectified the isomorphism.}
$$
(G/H)(B)\cong{\rm Aut}^\tau(R_M\otimes_k B/K\otimes_k B),
$$
which is actually an isomorphism of algebraic group.
\end{rmk}

We remind that $k_n=K^{\tau^n}$. See Example \ref{exa:basic-example-difference-field}.\referee{R1. I have recalled the definition of $k_n$ and
add a reference to the place where it first appears.}

\begin{cor}\label{cor:unitycomponent}
Let $L$ and $G$ be as above and let $G^\circ$ be the connected component of the identity of $G$.
Then, $L^{G^\circ}$ is the relative algebraic closure of $K$ in $L$
(and, hence in our framework coincides with $K(k_n)$, for a convenient positive integer $n$).
\end{cor}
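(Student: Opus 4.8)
The plan is to squeeze $L^{G^\circ}$ between two inclusions and conclude that it equals the relative algebraic closure $\overline{K}$ of $K$ in $L$. First I would check that $\overline{K}$ is a legitimate input for the Galois correspondence: since $\tau$ is inversive, both $\tau$ and $\tau^{-1}$ restrict to automorphisms of $K$, hence they map any element algebraic over $K$ to another such element (it satisfies the polynomial obtained by applying $\tau^{\pm1}$ to the coefficients of a vanishing polynomial). Thus $\overline{K}$ is a $\tau$-subfield of $L$ lying between $K$ and $L$, and $\overline{K}/K$ is in fact \emph{finite} because the field extension $L/K$ is finitely generated. By Theorem \ref{thm:GaloisCorrespondence1} we may then write $\overline{K}=L^{H}$ with $H:=\Gal(L/\overline{K})$, and the assertion reduces to the equality $H=G^\circ$.

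For the inclusion $L^{G^\circ}\subseteq\overline{K}$, I would apply Theorem \ref{thm:dimension} to the Picard--Vessiot extension $L/L^{G^\circ}$ (recall that $L$ is automatically a \PV\ field over any intermediate $\tau$-field): it gives $\hbox{\rm trdeg}_{L^{G^\circ}}L=\dim_k\Gal(L/L^{G^\circ})=\dim_k G^\circ=\dim_k G=\hbox{\rm trdeg}_K L$, so by additivity of the transcendence degree $\hbox{\rm trdeg}_K L^{G^\circ}=0$. Hence $L^{G^\circ}/K$ is algebraic, and therefore $L^{G^\circ}\subseteq\overline{K}$.

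For the reverse inclusion, applying Theorem \ref{thm:dimension} to $L/\overline{K}$ yields $\dim_k H=\hbox{\rm trdeg}_{\overline{K}}L=\hbox{\rm trdeg}_K L=\dim_k G$, so $H$ is a closed algebraic subgroup of $G$ of the same dimension as $G$; its identity component $H^\circ$ is then a connected algebraic subgroup of $G^\circ$ of the same dimension, whence $H^\circ=G^\circ$ and $G^\circ\subseteq H$. Consequently $\overline{K}=L^{H}\subseteq L^{G^\circ}$, and together with the previous step this gives $L^{G^\circ}=\overline{K}$. As a byproduct, since $G^\circ$ is a normal subgroup of $G$ with finite quotient, Theorem \ref{thm:GaloisCorrespondence2} identifies $\Gal(\overline{K}/K)$ with $G/G^\circ$; and for the parenthetical one checks, by direct inspection in the concrete settings of Section \ref{sec:PVrings} (where typically $K=k(x)$), that the finite $\tau$-extension $\overline{K}/K$ is generated over $K$ by finitely many elements fixed by a suitable power $\tau^n$, so that it coincides with $K(k_n)$.

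The step I expect to need the most care is the very first one: the $\tau$-stability of $\overline{K}$, hence the legitimacy of writing $\overline{K}=L^{H}$. This is exactly where the inversivity of $\tau$ (the standing hypothesis of this section) is indispensable — without $\tau^{-1}$ one cannot argue that $\overline{K}$ is stable under $\tau^{-1}$, and $\overline{K}$ might then fail to be a $\tau$-subfield. The passage from ``$\dim_k H=\dim_k G$'' to ``$G^\circ\subseteq H$'' is routine in the characteristic zero setting of the applications (where $G$ is smooth), and I would simply phrase it through identity components as above so that nothing goes wrong in positive characteristic; pinning down the precise form of the parenthetical across all of the paper's frameworks is a concrete verification rather than a conceptual obstacle.
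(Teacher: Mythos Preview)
Your argument is correct and follows essentially the same sandwich strategy as the paper: establish $L^{G^\circ}\subseteq\overline{K}$ via transcendence-degree/dimension counting, then feed $\overline{K}$ into the Galois correspondence and compare the resulting subgroup $H$ with $G^\circ$. The paper phrases the second step via the characterization of $G^\circ$ as the smallest closed subgroup with finite quotient (after first observing that $\overline{K}$ is itself a \PV\ field, so that $H$ is normal by Theorem~\ref{thm:GaloisCorrespondence2} and $G/H$ makes sense), whereas you argue directly that $\dim_k H=\dim_k G$ forces $G^\circ\subseteq H$; these are equivalent standard facts about algebraic groups, and your phrasing has the mild advantage of not needing the \PV\ structure on $\overline{K}$.

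One correction to your self-assessment: the place you single out as needing inversivity is not the right one. Stability of $\overline{K}$ under $\tau$ --- that is, $\tau(\overline{K})\subset\overline{K}$ --- requires only that $\tau$ be an endomorphism of $K$: if $p(x)=0$ with $p\in K[T]$, then $(\tau p)(\tau x)=0$ and $\tau p\in K[T]$. This is already enough for $\overline{K}$ to be a $\tau$-subfield in the sense used in Theorem~\ref{thm:GaloisCorrespondence1}; no stability under $\tau^{-1}$ is required there. Inversivity enters Section~\ref{sec:GaloisCorrespondenceNormal} through Proposition~\ref{prop:elements of PV} and hence Theorem~\ref{thm:GaloisCorrespondence2}; the paper's proof invokes the latter, but your main argument actually sidesteps it (you only use Theorem~\ref{thm:GaloisCorrespondence2} for the parenthetical byproduct). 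So your route is, if anything, slightly more economical on hypotheses than you give it credit for.
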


\begin{proof}
Since $G^\circ$ is a normal subgroup of $G$, the finite quotient $G/G^\circ$ is isomorphic to
$\Gal(L^{G^\circ}/K)$. Theorem \ref{thm:dimension} implies that $L^{G^\circ}/K$ is an algebraic extensions, which is also finitely generated.
\par
Let $\wtilde L$ be the relative algebraic closure of $K$ in $L$. Then $L^{G^\circ}\subset \wtilde L$.
Since any algebraic element of $L$ over $K$ is solution of a differential equation over $K$, $\wtilde L$ is also a Picard-Vessiot field,
that therefore correspond to an algebraic subgroup $H$ of $G$, in the sense that $\wtilde L=L^H$.
The inclusion $L^{G^\circ}\subset \wtilde L$ implies that $H\subset G^\circ$. Moreover $G/H$ is a finite group, because it must have dimension $0$, after Theorem
\ref{thm:dimension}.
Since $G^\circ$ is the smallest group such that the quotient $G/G^\circ$ is finite,
we deduce that $H=G^\circ$ and therefore, from the Galois correspondence, that $\wtilde L=L^{G^\circ}$.
\referee{R1. I have developed the last sentence}
\end{proof}

\appendix
\section{Behavior of the Galois group with respect to the iteration of $\tau$}

Let us consider the system \eqref{eq:system} and its $n$-th iteration:
\beq\label{eq:dsystem}
\tau^ny=A_ny,
\hbox{~where~} A_n:=\tau^{n-1}(A)\cdots\tau(A)A.
\eeq
We want to compare the Galois group of \eqref{eq:system} with the Galois group of \eqref{eq:dsystem}.
\par
It follows from the Definition \ref{defn:PV} of Picard-Vessiot ring and field \referee{R1. I have referred more precisely to the definition.} that,
if $R$ (resp. $L$) is a \PV\ ring (resp. field) for \eqref{eq:system} over $K$,
$R(k_n)$   (resp. $L(k_n)$) is also a \PV\ ring (resp. field) for \eqref{eq:dsystem} over $K(k_n)$.
Let $G_n:=\Gal^{\tau^n}(L(k_n)/K(k_n))$, where we have add the superscript $\tau^n$ to the notation with the obvious meaning, to avoid any confusion.

\medskip
Let $G^\circ _1$ be the identity component of $G_1$.
By Corollary \ref{cor:unitycomponent}, there exists $r$, such that $k_r\subset L$ and that
$\Gal^\tau(L/K(k_r))=G_1^\circ$.

\begin{lemma}
In the notation above, we have $\Gal^{\tau^r}(L/K(k_r))=G_1^\circ\otimes_k k_r$.
\end{lemma}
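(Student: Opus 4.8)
The plan is to identify the coordinate Hopf algebras of the two group schemes. Since $k_r\subset L$, every $c\in k_r$ satisfies the linear difference equation $\tau^r(y)=y$ over $K$ (a fortiori over $K(k_r)$), so Proposition~\ref{prop:elements of PV} gives $k_r\subset R$, whence $R(k_r)=R$. Thus $R$ is at the same time the \PV\ ring of \eqref{eq:dsystem} over $K(k_r)$ (as recalled before the lemma) and the \PV\ ring of \eqref{eq:system} over $K(k_r)$, the latter having difference Galois group $\Gal^\tau(L/K(k_r))=G_1^\circ$ by the choice of $r$. Note also that $R^\tau=R\cap F^\tau=k$ and, since $k_r\subset R\subset F$, that $R^{\tau^r}=R\cap F^{\tau^r}=k_r$.

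First I would invoke Proposition~\ref{prop:algebraicgp} twice: for the $\tau$-system over $K(k_r)$ it shows that $G_1^\circ$ is represented over $k$ by $\cO:=(R\otimes_{K(k_r)}R)^\tau$, and for the $\tau^r$-system over $K(k_r)$ it shows that $\Gal^{\tau^r}(L/K(k_r))$ is represented over $k_r$ by $\cO':=(R\otimes_{K(k_r)}R)^{\tau^r}$. Put $Z:=(U^{-1}\otimes 1)(1\otimes U)\in\GL_d(R\otimes_{K(k_r)}R)$. Using $\tau(U)=AU$ and the equality $A\otimes 1=1\otimes A$ in $R\otimes_{K(k_r)}R$ one computes $\tau(Z)=Z$, so $Z$ is $\tau$-invariant and \emph{a fortiori} $\tau^r$-invariant; it is the same matrix $Z$ that realizes the closed immersions $G_1^\circ\hookrightarrow\GL_{d,k}$ and $\Gal^{\tau^r}(L/K(k_r))\hookrightarrow\GL_{d,k_r}$. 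As in the proof of Proposition~\ref{prop:algebraicgp} one also has an isomorphism $R\otimes_{K(k_r)}R\cong R\otimes_k\cO$ which carries the diagonal action of $\tau$ on the source to $\tau\otimes\mathrm{id}$ on the target --- precisely because the ``new coordinates'' $Z$ are $\tau$-invariant --- and hence carries the diagonal $\tau^r$ to $\tau^r\otimes\mathrm{id}$.

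Next I would take $\tau^r$-invariants on both sides of this isomorphism:
$$
\cO'=(R\otimes_{K(k_r)}R)^{\tau^r}\cong(R\otimes_k\cO)^{\tau^r\otimes\mathrm{id}}=R^{\tau^r}\otimes_k\cO=k_r\otimes_k\cO ,
$$
the middle equality being immediate once one picks a $k$-basis $(e_\alpha)$ of $\cO$: an element $\sum_\alpha r_\alpha\otimes e_\alpha$ is fixed by $\tau^r\otimes\mathrm{id}$ exactly when each $r_\alpha\in R^{\tau^r}$. Transporting this back, the natural multiplication map $\cO\otimes_k k_r\to\cO'$ (well defined because $\cO\subset\cO'$ and $k_r\subset\cO'$) is an isomorphism of $k_r$-algebras; it is moreover an isomorphism of Hopf algebras, since the comultiplications on $\cO$ and on $\cO'$ are both determined by $\Delta(Z)=Z\otimes Z$ and are trivial on the coefficient fields. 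Therefore $\Gal^{\tau^r}(L/K(k_r))$ is represented over $k_r$ by $\cO\otimes_k k_r$, i.e. it is the base change $G_1^\circ\otimes_k k_r$.

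The step I expect to be the main obstacle is the claimed $\tau$-equivariance of the isomorphism $R\otimes_{K(k_r)}R\cong R\otimes_k\cO$: one has to re-enter the proof of Proposition~\ref{prop:algebraicgp} and verify that, under the identifications $1\otimes U=(U\otimes 1)Z$ and $\cO=(R\otimes_{K(k_r)}R)^\tau$, the operator $\tau$ acts on the $R$-factor as $\tau$ and trivially on the $\cO$-factor. The remaining points are routine: the injectivity of $k_r\otimes_k\cO\to R\otimes_k\cO$ follows from flatness of $\cO$ over $k$, and the compatibility with the group laws follows from all groups in sight being closed subgroup schemes of $\GL_d$ cut out by the single matrix $Z$.
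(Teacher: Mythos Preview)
Your argument is correct, and it proceeds along a genuinely different line from the paper's proof. The paper argues geometrically: for every $k_r$-algebra $B$ the tautological map $G_1^\circ(B)\to\Gal^{\tau^r}(L/K(k_r))(B)$ (an automorphism commuting with $\tau$ commutes with $\tau^r$) is a closed immersion of algebraic groups over $k_r$; by Theorem~\ref{thm:dimension} both groups have dimension $\hbox{trdeg}_{K(k_r)}L$, and since both are connected, the immersion is an isomorphism. Your proof bypasses this by identifying the coordinate Hopf algebras directly: using the $\tau$-equivariance of the structural isomorphism $R\otimes_{K(k_r)}R\cong R\otimes_k\cO$ from the proof of Proposition~\ref{prop:algebraicgp}, you pass to $\tau^r$-invariants and read off $\cO'\cong k_r\otimes_k\cO$.

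What each approach buys: the paper's route is a two-line argument once one has the dimension theorem, but it silently uses that $\Gal^{\tau^r}(L/K(k_r))$ is connected; this follows from the $\tau^r$-version of Corollary~\ref{cor:unitycomponent} together with the fact that $K(k_r)$, being already the relative algebraic closure of $K$ in $L$, is relatively algebraically closed in $L$ --- but the paper does not spell this out. Your Hopf-algebra computation avoids any appeal to connectedness altogether and makes the base-change statement $G_1^\circ\otimes_k k_r$ literally visible at the level of representing algebras. The point you flag as the main obstacle (the $\tau$-equivariance of $R\otimes_{K(k_r)}R\cong R\otimes_k\cO$) is indeed the crux, and your justification is sound: the isomorphism is the multiplication map $r\otimes c\mapsto (r\otimes 1)\cdot c$, and since $c\in\cO=(R\otimes_{K(k_r)}R)^\tau$ is $\tau$-fixed, the diagonal $\tau$ on the target corresponds exactly to $\tau\otimes\mathrm{id}$ on the source.
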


\begin{proof}
By definition, for any $k_r$- algebra $B$, we have an injective morphisms from
$G^\circ_1(B)\to  \Gal^{\tau^r}(L/K(k_r))(B)$, indeed if a morphisms commutes with $\tau$, it commutes also with $\tau^n$.
The equality follows from the fact that the groups are connected and that they have the same dimension,
by Theorem \ref{thm:dimension}.\referee{R1. I have explain the link with the theorem.}
\end{proof}

 \begin{prop}
 In the notation introduced above,
for any $n\geq r$,
 the Galois group of \eqref{eq:system} over $K(k_n)$ is isomorphic to
 the Galois group of \eqref{eq:dsystem} over $K(k_n)$.
 \end{prop}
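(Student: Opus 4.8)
The plan is to run the proof of the Lemma above with $\tau^n$ in place of $\tau^r$; throughout I write $k_n=F^{\tau^n}$, so $k_n\subset F$ and therefore $L(k_n)\subset F$. The same fundamental matrix $U\in\GL_d(F)$ solving $\tau(Y)=AY$ also solves $\tau^n(Y)=A_nY$, so $R(k_n):=K(k_n)[U,\det U^{-1}]$ is simultaneously a \PV\ ring for \eqref{eq:system} over $K(k_n)$ relative to $\tau$ (it is $\tau$-simple by Proposition \ref{prop;tausemplicity}) and a \PV\ ring for \eqref{eq:dsystem} over $K(k_n)$ relative to $\tau^n$. Because everything lies in $F$, the constants are forced: $L(k_n)^\tau=K(k_n)^\tau=k$ and $L(k_n)^{\tau^n}=K(k_n)^{\tau^n}=k_n$, so both are \PV\ extensions in the sense of Definition \ref{defn:PV}. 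Write $\wtilde G_n:=\Gal^\tau(L(k_n)/K(k_n))$, a group scheme over $k$, and keep $G_n=\Gal^{\tau^n}(L(k_n)/K(k_n))$, a group scheme over $k_n$; as in the Lemma, the claim to establish is $\wtilde G_n\otimes_k k_n\cong G_n$.

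First I would build the natural comparison morphism. For a $k_n$-algebra $B$, an element $\varphi\in\wtilde G_n(B)=\Aut^\tau(R(k_n)\otimes_k B/K(k_n)\otimes_k B)$ commutes with $\tau^n$ and fixes both $1\otimes B$ and $k_n\otimes 1\subset K(k_n)\otimes_k B$, hence fixes all elements $c\otimes b-1\otimes cb$ with $c\in k_n$; thus $\varphi$ descends to a $\tau^n$-equivariant automorphism of $R(k_n)\otimes_{k_n}B$, that is, to an element of $G_n(B)$. This assignment is functorial in $B$ and is a monomorphism, since $\varphi$ is determined by the invariant matrix $U^{-1}\varphi(U)\in\GL_d(B)$ (invariance by Lemma \ref{lemma:LinDisj}), which the quotient leaves unchanged. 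Hence $\wtilde G_n\otimes_k k_n$ is a closed subgroup scheme of $G_n$.

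It remains to show this closed immersion is onto, and this is where $n\ge r$ is genuinely used. By Corollary \ref{cor:unitycomponent}, $K(k_r)$ is the relative algebraic closure of $K$ in $L$; for $n\ge r$ one has $K(k_r)\subset K(k_n)$, and $L/K(k_r)$ is a regular extension, being separable (as $F/k$ is) with $K(k_r)$ relatively algebraically closed in $L$. From these two facts one checks that the relative algebraic closure of $K(k_n)$ in $L(k_n)$ is $K(k_n)$ itself. Applying Corollary \ref{cor:unitycomponent} to \eqref{eq:system} over $K(k_n)$ (for $\tau$) and to \eqref{eq:dsystem} over $K(k_n)$ (for $\tau^n$) then shows $\wtilde G_n$ and $G_n$ are connected, hence so is $\wtilde G_n\otimes_k k_n$. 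Finally, Theorem \ref{thm:dimension} gives $\dim_k\wtilde G_n=\mathrm{trdeg}_{K(k_n)}L(k_n)=\dim_{k_n}G_n$ and $\dim_{k_n}(\wtilde G_n\otimes_k k_n)=\dim_k\wtilde G_n$; since a connected closed subgroup scheme of a connected algebraic group of the same dimension is the whole group (immediate in characteristic zero, where the groups are reduced), the inclusion $\wtilde G_n\otimes_k k_n\hookrightarrow G_n$ is an isomorphism.

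The hard part will be the connectedness step, which is exactly what the hypothesis $n\ge r$ serves: one has to argue that adjoining further $k_n$ beyond $k_r$ introduces no new elements algebraic over the base, and one has to keep strict track of which field of constants ($k$ for $\wtilde G_n$, $k_n$ for $G_n$) each Galois group scheme lives over, so that the asserted isomorphism is genuinely one after base change $k\to k_n$ — precisely as in the Lemma, whose conclusion reads $G_1^\circ\otimes_k k_r$. The construction of the comparison monomorphism and the dimension count via Theorem \ref{thm:dimension} are then formal.
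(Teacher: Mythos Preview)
Your proof is correct and follows essentially the same strategy as the paper's: embed the $\tau$-Galois group into the $\tau^n$-Galois group, compare dimensions via Theorem~\ref{thm:dimension}, and conclude by connectedness. The only cosmetic difference is that the paper obtains connectedness by first identifying $\Gal^\tau(L(k_n)/K(k_n))$ with $G_1^\circ\otimes_k k_n$ (via the inclusion $K(k_r)\subset K(k_n)$), whereas you argue it directly from the regularity of $L/K(k_r)$ and Corollary~\ref{cor:unitycomponent}; your version is in fact more explicit about the base change $k\to k_n$ and about why no new algebraic elements appear once $n\ge r$.
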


\begin{proof}
The Galois group of \eqref{eq:system} over $K(k_n)$ is $\Gal^\tau(L(k_n)/K(k_n))\cong G_1^\circ \otimes_k k_n$.
It can be naturally seen as a subgroup of  $\Gal^{\tau^n}(L(k_n)/K(k_n))$.
Equality follows from Theorem \ref{thm:dimension} and the connectedness of the groups, as in the proof of the previous
lemma.\referee{R1. Same argument as in the previous proposition.}
\end{proof}

\providecommand{\bysame}{\leavevmode\hbox to3em{\hrulefill}\thinspace}
\providecommand{\MR}{\relax\ifhmode\unskip\space\fi MR }
\providecommand{\MRhref}[2]{%
  \href{http://www.ams.org/mathscinet-getitem?mr=#1}{#2}
}
\providecommand{\href}[2]{#2}

\end{document}